\documentclass[11pt]{amsart}
\usepackage{mathtools}
\usepackage{xcolor}
\usepackage{amssymb}
\usepackage{enumitem}
\usepackage[margin=1in]{geometry}
\usepackage[utf8]{inputenc}
\usepackage[cyr]{aeguill}
\usepackage[pdfdisplaydoctitle=true,
            colorlinks=true,
            urlcolor=blue,
            citecolor=blue,
            linkcolor=blue,
            pdfstartview=FitH,
            pdfpagemode= UseNone,
            bookmarksnumbered=true]{hyperref}
\usepackage{todonotes}
\usepackage[all]{xy}
\usepackage{stmaryrd}

\vfuzz2pt 
\hfuzz2pt 
\newtheorem{theorem}{Theorem}[section]
\newtheorem{corollary}[theorem]{Corollary}
\newtheorem{lemma}[theorem]{Lemma}
\newtheorem{proposition}[theorem]{Proposition}
\theoremstyle{definition}
\newtheorem{definition}[theorem]{Definition}

\newtheorem{example}[theorem]{Example}
\newtheorem{remark}[theorem]{Remark}
\DeclareMathOperator{\dist}{dist}
\DeclareMathOperator{\grad}{\nabla}

\newcommand{\Abs}[1]{\left|#1\right|}

\newcommand{\R}{\mathbb R}
\newcommand{\E}{\mathbb E}
\newcommand{\dx}{\lambda}
\newcommand{\Prob}{\operatorname{Prob}(M)}
\newcommand{\Probk}{\operatorname{Prob}^k(M)}
\newcommand{\Dens}{\operatorname{Dens}_+(M)}
\newcommand{\Densk}{\operatorname{Dens}^k_+(M)}
\newcommand{\Diff}{\operatorname{Diff}(M)}
\newcommand{\SDiff}{\operatorname{Diff}_{\dx}(M)}
\newcommand{\ProbRn}{\operatorname{Prob}(\R^n)}
\newcommand{\DensRn}{\operatorname{Dens}_+(\R^n)}
\newcommand{\GFR}{G^{\mathrm{FR}}}

\newcommand{\ntr}{\nabla^{\text{tr}}}
\newcommand{\nablabar}{\overline{\nabla}}
\newcommand{\nablatilde}{\tilde{\nabla}}
\newcommand{\ud}{\textup{d}}

\title{The $L^p$-Fisher-Rao metric and Amari-\u{C}encov $\alpha$-connections}

\author[]
{Martin Bauer, Alice Le Brigant, Yuxiu Lu, Cy Maor}

\address{M. Bauer: Florida State University and University of Vienna; A. Le Brigant: SAMM, Universit\'e Paris 1;  Y. Lu: Florida State University; C. Maor: Einstein Institute of Mathematics, The Hebrew University of
Jerusalem}
\email{bauer@math.fsu.edu, alice.le-brigant@univ-paris1.fr.com, yl18f@fsu.edu,
cy.maor@mail.huji.ac.il}


\date{\today}
\keywords{}
\subjclass[2010]{%
}

\begin{document}

\begin{abstract}
We introduce a family of Finsler metrics, called the $L^p$-Fisher-Rao metrics $F_p$, for $p\in (1,\infty)$, which generalizes the classical Fisher-Rao metric $F_2$, both on the space of densities $\Dens$ and probability densities $\Prob$. 
We then study their relations to the Amari-\u{C}encov $\alpha$-connections $\nabla^{(\alpha)}$ from information geometry: on $\Dens$, the geodesic equations of $F_p$ and $\nabla^{(\alpha)}$ coincide, for $p = 2/(1-\alpha)$. 
Both are pullbacks of canonical constructions on $L^p(M)$, in which geodesics are simply straight lines.
In particular, this gives a new variational interpretation of $\alpha$-geodesics as being energy minimizing curves.
On $\Prob$, the $F_p$ and $\nabla^{(\alpha)}$ geodesics can still be thought as pullbacks of natural operations on the unit sphere in $L^p(M)$, but in this case they no longer coincide unless $p=2$. 
Using this transformation, we solve the geodesic equation of the $\alpha$-connection by showing that the geodesic are pullbacks of projections of straight lines onto the unit sphere,
and they always cease to exists after finite time when they leave the positive part of the sphere. 
This unveils the geometric structure of solutions to the generalized Proudman-Johnson equations, and generalizes them to higher dimensions.
In addition, we calculate the associate tensors of $F_p$, and study their relation to $\nabla^{(\alpha)}$.
\end{abstract}

\maketitle

\setcounter{tocdepth}{1}
\tableofcontents

\section{Introduction}
Information geometry is concerned with the study of spaces of probability densities as differentiable manifolds. Its first developments were mostly about the finite-dimensional geometry of parametric statistical models, for which the space of distributions can be identified with the parameter space. In 1945, Rao \cite{rao1945} showed that the Fisher information could be used to define a Riemannian metric on this space, and in 1982, \u{C}encov \cite{cencov1982} proved that it was the only metric invariant with respect to sufficient statistics, for families with finite sample spaces. The Fisher-Rao metric was also shown to induce well-known geometries on certain important statistical models, such as hyperbolic geometry on normal distributions \cite{atkinson1981}. 

Encompassing the Fisher-Rao metric, a richer geometric structure was introduced by \u{C}encov \cite{cencov1982} and Amari \cite{amari2000methods} on spaces of parametric probability distributions. The Amari-\u{C}encov structure relies on a family of affine connections called the $\alpha$-connections, denoted by $\nabla^{(\alpha)}$, that are dual with respect to the Fisher-Rao metric, and such that the $0$-connection is the Levi-Civita connection. The $\alpha$-connections arise naturally as an interpolating family between the so-called exponential and mixture connections $\nabla^{(1)}$ and $\nabla^{(-1)}$, for which exponential and mixture families are (dually) flat manifolds. These geometric tools relate to natural information-theoretic quantities such as the Kullback-Leibler divergence, and have been used in statistical inference, e.g. to express conditions for existence of consistent and efficient estimators, or to obtain a purely geometric interpretation of the famous Expectation-Maximization (EM) algorithm in the presence of hidden variables \cite{amari2016information}.

In parallel, infinite-dimensional information geometry tools have also been developed in the non-parametric setting, although arguably to a lesser extent. 
The non-parametric Fisher-Rao metric was introduced by Friedrich in 1991 \cite{friedrich1991} on the space of all probability densities. 
He showed that it yields the historical Fisher information metric when restricted to finite-dimensional submanifolds representing parametric statistical models, and that the geometry is spherical with constant curvature $1/4$. More than two decades later, it was proved to be the only metric (up to a multiplicative factor) invariant with respect to the action of sufficient statistics, namely diffeomorphic change of the support, just like in the finite-dimensional case \cite{ay2015information, bauer2016uniqueness}. 
In the infinite-dimensional setting, it is possible to work with diffeomorphisms of the support instead of the densities themselves, since the space of smooth densities on a compact manifold $M$ with respect to a volume form $\dx$ can be obtained as the quotient $\Diff/\SDiff$ of diffeomorphisms modulo  diffeomorphisms preserving $\dx$. 
Using this representation Khesin, Lenells, Misiolek and Preston \cite{khesin2013geometry} have shown in 2013 that the Fisher-Rao metric can be obtained as the quotient of a right-invariant homogeneous Sobolev $\dot H^1$-metric on $\Diff$, see also~\cite{modin2015generalized} and the recent overview article~\cite{khesininformation}.

The Amari-\u{C}encov structure induced by the $\alpha$-connections also received interest in the non-parametric setting. Giblisco and Pistone \cite{gibilisco1998connections} defined the exponential and mixture connections in this case, and showed that for $\alpha\in (-1,1)$, the interpolating connections can be defined through a $p$-root mapping to an $L^p$ sphere, for $p=\frac2{1-\alpha}$. 
Divergences and dualistic structures are investigated in the monograph of Ay, Jost, Lê and Schwachhöfer \cite{ay2017information}, although the $\alpha$-connections themselves are not directly considered there in the infinite-dimensional setting. 
See also \cite{newton2012infinite} for a definition of the $\alpha$-divergences and $\alpha$-connections in a Hilbert manifold settings.
In \cite{lenells2014amari}, Lenells and Misio\l{}ek study the $\alpha$-connections on diffeomorphisms and relate their geodesic equations to a well-known equation, the generalized Proudman--Johnson equation. 
Very recently, three authors of the present paper showed that these Proudman--Johnson equations, on the real line, could alternatively be seen as the geodesic equations of right-invariant Finsler metrics on the diffeomorphism group \cite{bauer2022geometric}, which were first introduced in \cite{cotter2020r}. 
This led to making a first link between $\alpha$-connections and a family of Finsler metrics, which we investigate further here.

\subsection{Main contributions}
The aim of the present paper is three-fold. First, to introduce and study the $L^p$-Fisher-Rao metrics on (probability) densities
$$F_p(a):=F_p({\mu},a)=\left(\int\Abs{\frac{a}{\mu}}^p\mu\right)^{\frac{1}{p}},$$
for $p\in(1,\infty)$ and any density $\mu$ and tangent vector $a$. Note, that is a family of Finsler metrics that conincides with the Fisher-Rao metric when $p=2$. Second, to give a precise and rigorous review of the Amari-\u{C}encov $\alpha$-connections in the infinite-dimensional setting, a new variational formulation of their corresponding geodesics, and explicit solution formulas for them. Finally, to make links between the two, distinguishing between the space of densities, the space of probability densities, and parametric statistical models.

Next we will describe the main contributions in more details: we study the $L^p$-Fisher-Rao geometry of (probability) densities through a mapping to the set of positive functions,
$$\Phi_p(\mu)=\left(\frac{\mu}{\dx}\right)^{1/p},$$
where $\dx$ is some background probability measure.
Just like the Fisher-Rao metric is the pullback of the standard $L^2$-metric via the square-root transform \cite{khesin2013geometry,bruveris2019geometry,gibilisco2020p},
we show that the $L^p$-Fisher-Rao metric is the pullback of the $L^p$-norm via the mapping $\Phi_p$, that we call by analogy the \emph{$p$-root transform} (Theorems~\ref{thm:proot} and \ref{thm:proot_Prob}). 
The $L^p$-Fisher-Rao geometry on the space of densities is therefore that of a flat space, as described in Corollary~\ref{cor:Lpgeometry}, and on the space of probability densities that of the $L^p$-sphere (Theorem~\ref{thm:proot_Prob}).   
The $p$-root transform (for $p = \frac{2}{1-\alpha}$) also presents an alternative way to define the $\alpha$-connections as pullbacks of the trivial connection of the vector space of functions (Theorems~\ref{thm:proot} and \ref{thm:proot_Prob}), as first shown by Gibilisco and Pistone \cite{gibilisco1998connections} for probability distributions, albeit with a slightly different construction.
The geometric differences between these constructions for the $L^p$-Fisher-Rao metric and the $\alpha$-connections, which we systematically study in this paper, are summarized in Figure~\ref{fig:p_root}.

Towards this aim, we show that the geodesic equations of $F_p$ and $\nabla^{(\alpha)}$ coincide on $\Dens$ (for $\alpha=1-2/p$), but not on $\Prob$ (see Theorems~\ref{thm:geo_alpha_dens}, \ref{thm:geo_lp_dens}, \ref{eq:alpha_conn_geo_Prob} and \ref{thm:LpFR_geo_prob});
similarly, on $\Dens$ the Chern connection induced by $F_p$ coincides with the $\alpha$-connection, while this no longer holds on $\Prob$ (Theorem~\ref{thm:chern_dens} and Remark~\ref{rem:chern-prob}).
This provides the novel variational formulation of these $\alpha$-connection geodesics.

We further use the $p$-root transform to obtain explicit solution formulas for $\alpha$-geodesics on densities and on probability densities:
for densities, we show in Corollary~\ref{cor:Lpgeometry} that geodesics are pullbacks of straight lines in $L^p$ space, whereas for probability densities we show in Theorem~\ref{prop:alpha-geod-prob} that they are pullbacks of projections of straight lines in $L^p$ onto the $L^p$-sphere. 
In the latter case the projection involves a time rescaling that is obtained as a solution of an ordinary differential equation.
Similar solutions of the geodesic equation of the $\alpha$-connection were obtained for finite sample space \cite[pp.~50-51]{ay2017information}.
In the infinite-dimensional case with a one-dimensional base manifold $M$, it gives an explicit solution (modulo a solution to an ODE) of the  generalized Proudman-Johnson equation, for a certain range of parameters, and to the generalization to higher-dimensional base manifolds by Lenells and Misio\l{}ek \cite{lenells2014amari}.
There, they proved the complete integrability of these equations for the flat case $\alpha = \pm 1$ by providing an explicit solution formula.
Similarly, the integrability for the case $\alpha = 0$ was shown in \cite{khesin2013geometry}.
Our results can thus be interpreted as complete integrability of the $\alpha$-geodesic equation for the whole range $\alpha \in (-1,1)$.

The results in the one-dimensional situation are in correspondence with the analysis of~\cite{kogelbauer2020global,sarria2013blow}, where a similar $p$-root transform was  used to study the generalized Proudman-Johnson equation. In these articles it was used as an ad-hoc simplification of some auxiliary equations; here we expose the geometry behind it, which also simplifies some of the authors' calculations, and generalize it to higher dimensions.
These connections are summarized in Section~\ref{sec:PDEs}.


Throughout this paper we work in the smooth category, i.e., all densities are assumed to be smooth, and the underlying space $M$ is assumed to be a smooth manifold.
This is mainly in order to avoid some technicalities, and most results work in much lower regularity.
For example, for all results not involving the action of $\Diff$, the underlying space $M$ can be simply a measurable space, and in many cases densities only need to be integrable.


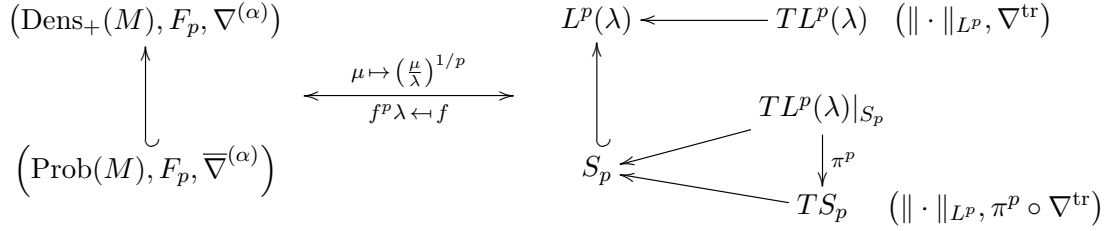
\begin{figure}
    \centering
    \[
\begin{xy}
(-60,0)*+{\left(\Prob,F_p,\overline{\nabla}^{(\alpha)}\right)}="Pr";
(-60,3)*+{}="Pr1";
(-60,20)*+{\left(\Dens,F_p,\nabla^{(\alpha)}\right)}="De";
(0,0)*+{S_p}="S";
(0,3)*+{}="S'";
(0,20)*+{L^p(\dx)}="Lp";
(30,20)*+{TL^p(\dx)}="TLp";
(50,20)*+{\left(\|\cdot\|_{L^p},\ntr\right)}="GLp";
(30,8)*+{TL^p(\dx)|_{S_p}}="TLpS";
(30,-5)*+{TS_p}="TS";
(53,-5)*+{\left(\|\cdot\|_{L^p},\pi^p\circ \ntr\right)}="GS";
{\ar@{->}^{\pi^p}"TLpS";"TS"};
{\ar@{->}^{}"TLp";"Lp"};
{\ar@{->}^{}"TLpS";"S"};
{\ar@{->}^{}"TS";"S"};
{\ar@{_{(}->}^{}"Pr1";"De"};
{\ar@{_{(}->}^{}"S'";"Lp"};
(-40,10)*+{}="x_1";
(-10,10)*+{}="x_2";
{\ar@{<->}^{\mu\,\mapsto \,\left(\frac{\mu}{\dx}\right)^{1/p}}_{f^p\dx\, \mapsfrom \,f}"x_1";"x_2"};
\end{xy}
\]
    \caption{Geometric structures on $\Dens$ and $\Prob$ via the $p$-root transform: The map $\mu \mapsto \left(\frac{\mu}{\dx}\right)^{1/p}$ maps $\Dens$ to (a subset of) $L^p(\dx)$, and $\Prob$ to its unit sphere $S_p$. 
    On $L^p(\dx)$ there is the natural Finsler metric $\|\cdot\|_{L^p}$ and the trivial connection $\nabla^{\text{tr}}$ of a vector space, the geodesics of both are straight lines. 
    Their pullback via the $p$-root map yield (up to a constant) the $L^p$-Fisher-Rao metric $F_p$ and the $\alpha$-connection $\nabla^{(\alpha)}$ on $\Dens$, whose geodesic equations coincide.  
    The metric $\|\cdot\|_{L^p}$ naturally restricts to $S_p$.
    The connection $\nabla^{\text{tr}}$ induces a connection on $S_p$ via the natural projection $\pi^p: TL^p(\dx)|_{S_p} \to TS_p$.
    The geodesics of these induced metric and connection differ.
    Their pullbacks via the $p$-root map yield (up to a constant) $F_p$ and the $\alpha$-connection $\overline{\nabla}^{(\alpha)}$ on $\Prob$.
    }
    \label{fig:p_root}
\end{figure}

\subsection{Outline}

The rest of the paper is organized as follows. We start  by describing some background on spaces of densities and the Fisher-Rao metric in Section~\ref{sec:background}. Then we investigate the geometries induced by the $\alpha$-connections and the $L^p$-Fisher-Rao metrics as well as their links, on the space of smooth densities in Section~\ref{sec:dens} and on the space of probability densities in Section~\ref{sec:prob}. 
In Section~\ref{sec:PDEs} we discuss the relations of the various geodesic equations obtained in Sections~\ref{sec:dens}--\ref{sec:prob} to some known PDEs, as well as the relation between the $L^p$-Fisher-Rao metric to Finsler metrics on diffeomorphism groups.
The different notions of geodesics are compared numerically on an example in Section \ref{sec:numerics}. 
Finally, we consider the finite-dimensional setting of parametric statistical models in Section~\ref{sec:finite-dim}, illustrated by the special case of normal distributions.
In Appendix~\ref{sec:finsler} we present a short overview of infinite-dimensional Finsler geometry.

\subsection*{Acknowledgements}
Parts of this work are contained in the PhD-thesis of the third author~\cite{Lu_thesis2023}.
MB was partially supported
by NSF grants DMS-1912037 and DMS-1953244 and by FWF grant FWF-P 35813-N. The first two authors acknowledge support of the Institut Henri Poincar{\'e} (IHP, UAR 839 CNRS-Sorbonne Universit{\'e}), and LabEx CARMIN (ANR-
10-LABX-59-01).
CM was partially supported by ISF grant 1269/19. 
\subsection*{Data availability statement}
Data sharing not applicable to this article as no datasets were generated or analysed during the current study.


\section{Spaces of densities and the Fisher-Rao metric}\label{sec:background}

In all of this article let $M$ be a closed manifold of dimension $\operatorname{dim}(M)<\infty$. 
We denote by $\Dens$ the space of smooth positive densities and by $\Prob$ the subspace of smooth probability densities, i.e., 
\begin{align*}
\Dens&:=\{\mu\in \Omega^n(M) : \mu>0\}\\
\Prob&:=\left\{\mu\in \Dens : \int\mu=1\right\}.
\end{align*}
Since $\Dens$ is an open subset of the Fr\'echet space $\Omega^n(M)$ it carries the structure of a Fr\'echet manifold with tangent space $T_{\mu}\operatorname{Dens}(M)=\Omega^n(M)$.
Similarly, as a linear subspace of a Fr\'echet manifold,  the space of probability densities is a Fr\'echet manifold, where the tangent space is given by 
\begin{align*}
T_{\mu}\Prob=\left\{a \in \Omega^n(M):\int a=0\right\}.
\end{align*}

On both the space of  densities and probability densities
we can consider the pushforward action of the diffeomorphism group $\Diff$.  On $\Dens$ it is given by
\begin{equation}\label{eq:pushforward}
\Diff\times \Dens\ni (\varphi,\mu)\mapsto \varphi_*\mu \in \Dens
\end{equation}
and, since the pushforward by a diffeomorphism is volume preserving, this action restricts to an action on the space of probability densities. 
By a result of Moser~\cite{moser1965volume} this action is transitive, which allows us to identify the space of probability densities with the quotient
\begin{equation}
\Prob\equiv \Diff/\SDiff, 
\end{equation}
where $\SDiff$ is the group of volume preserving diffeomorphisms of some fixed probability density $\dx$. 
Thus, constructions (metrics, connections, geodesics) on $\Prob$ can be pulled back to $\Diff$ via the map $\varphi\mapsto \varphi_*\dx$.

For $a\in \Omega^n(M)$ and $\mu\in \Dens$, we denote by $\frac{a}{\mu}$ the Radon-Nikodym derivative of $a$ with respect to $\mu$.
In particular, the map $\mu \mapsto \frac{\mu}{\dx}$ allows us to identify $\Dens$ with positive smooth functions on $M$, and $\Prob$ with the positive smooth functions that integrate to one.
For the proof of the local wellposedness results in Sections~\ref{sec:dens} and~\ref{sec:prob} we will also need the Sobolev completions of these spaces, which can be defined using their  Radon-Nikodym derivative w.r.t.\ to $\dx$, i.e., for $k>\operatorname{dim}(M)/2$ we consider
\begin{align*}
\Densk&:=\left\{\mu:\frac{\mu}{\dx}\in H^{k}(M), \text{ and } \mu>0\right\}\\
\Probk&:=\left\{\mu\in \Densk : \int\mu=1\right\}.
\end{align*}
Note, that the assumption  $k>\operatorname{dim}(M)/2$ is necessary to make sense of the positivity condition.

A central object in information geometry is the Fisher-Rao metric, which we introduce now:
\begin{definition}[Fisher-Rao metric]
Given $\mu\in \Dens$ and $a,b\in T_\mu\Dens$ the Fisher-Rao metric on $\Dens$ is given by
\begin{align}\label{fisher-rao}
\GFR_{\mu}(a,b)=\int \frac{a}{\mu} \frac{b}{\mu}\mu\;.
\end{align}
Via restriction $\GFR$ induces a Riemannian metric on $\Prob$, which we denote by the same letter.
\end{definition}


\section{The $L^p$-Fisher-Rao metric and $\alpha$-connections on the space of densities}\label{sec:dens}
In this section we will introduce the $L^p$-Fisher-Rao metric on the space of densities, which will allow us to obtain a new interpretation of the family of $\alpha$-connections.


\subsection{The Amari-\u{C}encov $\alpha$-connections on $\Dens$}
First we will introduce the family of $\alpha$-connections on the space $\Dens$.
In the finite-dimensional case, i.e., when $M$ is a finite set, the below definitions coincide with the classical ones, see e.g. \cite{amari2000methods,ay2015information}.
\begin{definition}[$\alpha$-divergence]
    For $\alpha\in (-1,1)$, define the $\alpha$-divergence $D^{(\alpha)}:\Dens\times \Dens\to \mathbb{R}$, by
    \[
    D^{(\alpha)}(\mu||\nu) = p\int_M \nu + p^*\int_M \mu -p^*p\int_M \left(\frac{\mu}{\dx}\right)^{1/p}\left(\frac{\nu}{\dx}\right)^{1/p^*}\dx,
    \]
    where $p=\frac{2}{1-\alpha}$ and $p^* = \frac{2}{1+\alpha}$ is its H\"older conjugate.
\end{definition}  
Using H\"older inequality, it follows that $D^{(\alpha)}$ is non-negative and vanishes if and only if $\mu=\nu$.
Furthermore, a straightforward calculation shows that the negative of its second derivative defines a positive bilinear form, which is exactly the Fisher-Rao metric, i.e., 
\[
 -\partial_\mu \partial_\nu D^{(\alpha)}(\mu||\nu)|_{\nu=\mu}[a,b] = \int_M \frac{a}{\mu}\frac{b}{\mu}\mu=\GFR_\mu(a,b),
\qquad a,b\in T\mu\Dens.
\]
Here $\partial_\mu$ and $\partial_\nu$ refer to derivatives with respect to the $\mu$ and $\nu$ variables, respectively.
Thus, for any $\alpha\in (-1,1)$,  $D^{(\alpha)}$ is a divergence in the sense of \cite[Section~4.4]{ay2015information}, and  induces a connection $\nabla^{(\alpha)}$ on $\Dens$ via the relation
\begin{equation}\label{eq:a_conn_def}
\GFR_\mu(\nabla^{(\alpha)}_a b, c) 
= - \partial_\mu(\partial_\mu\partial_\nu D^{(\alpha)}(\mu||\nu)[b,c])[a]|_{\nu=\mu} 
= \int_M \frac{Db.a}{\mu}\frac{c}{\mu}\mu-\frac{1}{p^*}\int_M \frac{a}{\mu}\frac{b}{\mu}\frac{c}{\mu}
\mu,
\end{equation}
where $a,b,c\in T_\mu\Dens$.
Since $\Dens$ is a Fr\'echet manifold, $\GFR$ is merely a weak Riemannian metric, and as such, \eqref{eq:a_conn_def} does not necessarily define $\nabla^{(\alpha)}$ uniquely.
However, in our case it does, yielding the following formulae:
\begin{lemma}[$\alpha$-connection]\label{lem:alphacon}
    For any $\alpha\in (-1,1)$ the $\alpha$-connections $\nabla^{(\alpha)}$ on $\Dens$ are given by
    \begin{equation}\label{alpha-dens}
    \nabla^{(\alpha)}_a b = Db.a - \frac{1}{p^*} \frac{a}{\mu}b,\quad a,b\in T_\mu\Dens,\;p^* = \frac{2}{1+\alpha}.
    \end{equation}
    Here $Db.a|_{\mu}:=D_\mu b(a_\mu)$ denotes the directional derivative of the vector field $b$ in the direction given by $a_\mu$. 
\end{lemma}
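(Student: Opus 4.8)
The plan is to read the formula directly off the defining relation \eqref{eq:a_conn_def} and then settle the uniqueness issue flagged just above the lemma. Fix $\mu \in \Dens$ and $a,b,c \in T_\mu\Dens = \Omega^n(M)$, with $b$ regarded as a vector field so that $Db.a$ makes sense. The right-hand side of \eqref{eq:a_conn_def} is a linear functional of the test vector $c$, and the whole point is to exhibit a single density $v \in \Omega^n(M)$ for which this functional equals $\GFR_\mu(v,c) = \int_M \frac{v}{\mu}\frac{c}{\mu}\mu$ for every $c$; nondegeneracy will then identify $v$ with $\nabla^{(\alpha)}_a b$.

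For the first term on the right of \eqref{eq:a_conn_def} one has $\int_M \frac{Db.a}{\mu}\frac{c}{\mu}\mu = \GFR_\mu(Db.a,\,c)$ directly from the definition of $\GFR$. For the second term I would use the elementary identity $\frac{f b}{\mu} = f\,\frac{b}{\mu}$ for the Radon--Nikodym derivative of a density $b$ multiplied by a smooth function $f$; applied with $f = \frac{a}{\mu}$ this gives $\frac{1}{\mu}\bigl(\frac{a}{\mu}b\bigr) = \frac{a}{\mu}\frac{b}{\mu}$, whence $\frac{1}{p^*}\int_M \frac{a}{\mu}\frac{b}{\mu}\frac{c}{\mu}\mu = \GFR_\mu\bigl(\frac{1}{p^*}\frac{a}{\mu}b,\,c\bigr)$. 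Adding the two contributions shows that the right-hand side of \eqref{eq:a_conn_def} equals $\GFR_\mu(v,c)$ with $v = Db.a - \frac{1}{p^*}\frac{a}{\mu}b$, which is exactly the claimed formula \eqref{alpha-dens}. Note that $v \in \Omega^n(M)$, since $Db.a$ is a density and $\frac{a}{\mu}b$ is a smooth function times a density; thus $v$ is a legitimate tangent vector at $\mu$.

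It remains to address the subtlety that $\GFR$ is only a weak Riemannian metric, so \eqref{eq:a_conn_def} need not determine $\nabla^{(\alpha)}_a b$ a priori: the metric induces an injective but not necessarily surjective map from $T_\mu\Dens$ into its dual, so it is the \emph{existence} of a representing vector that is the nontrivial point. The previous paragraph resolves existence by producing $v$ explicitly. For uniqueness I would invoke fiberwise nondegeneracy: if $v'$ also satisfies \eqref{eq:a_conn_def} for all $c$, then $\GFR_\mu(v - v', c) = 0$ for every $c \in \Omega^n(M)$, and testing against $c = v - v'$ yields $\int_M \bigl(\frac{v-v'}{\mu}\bigr)^2 \mu = 0$, forcing $v = v'$. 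Hence $\nabla^{(\alpha)}_a b = v$ is uniquely determined and given by \eqref{alpha-dens}. The only place requiring care --- and the main (mild) obstacle --- is precisely this existence/uniqueness step for the weak metric; the algebra of the preceding paragraph is routine once the Radon--Nikodym identity is in hand. For completeness one could instead derive the middle expression in \eqref{eq:a_conn_def} from scratch by computing the first three variations of $D^{(\alpha)}$ and using $1/p + 1/p^* = 1$ to collapse the powers of $\mu/\dx$ on the diagonal $\nu=\mu$, but since \eqref{eq:a_conn_def} is already available the computation above suffices.
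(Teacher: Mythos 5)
Your proof is correct and follows essentially the same route as the paper, whose proof of this lemma is simply the one-line remark that the formula follows directly from \eqref{eq:a_conn_def}. You fill in the details the paper leaves implicit — exhibiting the explicit representing vector $v = Db.a - \frac{1}{p^*}\frac{a}{\mu}b$ and observing that uniqueness (as opposed to existence) of such a representative is automatic from the fiberwise nondegeneracy of the weak metric $\GFR$, which is precisely the point the paper alludes to just before stating the lemma.
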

The easiest way to read this lemma (and similar formulae below) is to consider again the identification of densities and positive functions via $\mu\mapsto \mu/\dx$.
\begin{proof}
This follows directly from formula~\eqref{eq:a_conn_def}.
\end{proof}
In the following result we will study the local wellposedness of the corresponding geodesic equations. Therefore we will first consider these equations on a Banach space of Sobolev densities, where it will be easy to obtain the local wellposedness using the theorem of Picard-Lindel\"off. The result in the smooth category will then follow from an Ebin-Marsden type no-loss-no-gain result~\cite{ebin1970groups}:
\begin{theorem}\label{thm:geo_alpha_dens}
A path $\mu:[0,1]\to \Dens$ is a geodesic with respect to $\nabla^{(\alpha)}$ if
\begin{equation}\label{eq:geo_alphacon}
\mu_{tt} = \frac{1}{p^*}\frac{\mu_t}{\mu}\mu_t.
\end{equation}
For any $k>\operatorname{dim}(M)/2$ the geodesic equations are locally wellposed on the space of Sobolev densities $\Densk$, i.e., given initial conditions $\mu(0)\in \Densk$, $\mu_t(0)\in T_{\mu(0)}\Densk$ there exists an unique solution to equation~\eqref{eq:geo_alphacon} defined on a maximal interval of existence $[0,T)$. 
The maximal interval of existence is uniform in the Sobolev order $k$ and thus the local wellposedness continues to hold in the limit, i.e., on the space of smooth densities $\Dens$.
\end{theorem}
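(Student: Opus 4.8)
The plan is to first read the geodesic equation off Lemma~\ref{lem:alphacon}, and then to treat its local wellposedness as an ordinary differential equation on a Sobolev completion. By definition $\mu(t)$ is a $\nabla^{(\alpha)}$-geodesic precisely when $\nabla^{(\alpha)}_{\mu_t}\mu_t=0$. Substituting $a=b=\mu_t$ into~\eqref{alpha-dens} and observing that the directional derivative $D\mu_t.\mu_t$ of the velocity field along the curve is the ordinary second time derivative $\mu_{tt}$, I obtain
\[
\nabla^{(\alpha)}_{\mu_t}\mu_t=\mu_{tt}-\frac{1}{p^*}\frac{\mu_t}{\mu}\mu_t,
\]
so that the geodesic equation is exactly~\eqref{eq:geo_alphacon}. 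Under the identification $\mu\leftrightarrow f:=\mu/\dx$ of densities with positive functions, the Radon--Nikodym derivative $\mu_t/\mu$ becomes $f_t/f$, and the equation reads $f_{tt}=\frac{1}{p^*}\,f_t^2/f$.

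Next I would recast this second-order equation as the first-order system $(f,g)_t=\bigl(g,\tfrac{1}{p^*}g^2/f\bigr)$ for the state $(f,g)=(f,f_t)$ evolving in the open set $\mathcal U^k:=\{(f,g)\in H^k(M)\times H^k(M):f>0\}$. For $k>\dim(M)/2$ the Sobolev embedding $H^k(M)\hookrightarrow C^0(M)$ makes positivity an open condition and, by compactness of $M$, furnishes a uniform positive lower bound for $f$; moreover $H^k(M)$ is a Banach algebra. Hence $g\mapsto g^2$ is smooth, and since $x\mapsto 1/x$ is smooth on $(0,\infty)$ the reciprocal $f\mapsto 1/f$ is a smooth map on the positive cone, so the right-hand side is a smooth — in particular locally Lipschitz — vector field on $\mathcal U^k$. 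Picard--Lindel\"of then yields a unique maximal solution on some interval $[0,T_k)$, which stays in $\mathcal U^k$, i.e.\ $\mu(t)\in\Densk$. This establishes local wellposedness on $\Densk$.

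The genuinely delicate point, which I expect to be the main obstacle, is the no-loss-no-gain claim that $T_k$ does not depend on $k$, so that smooth data produce a smooth solution on the same interval. The cleanest route is to linearize via the $p$-root substitution $h:=f^{1/p}$: since $\tfrac{1}{p^*}=\tfrac{p-1}{p}$, a direct computation shows that $f_{tt}=\tfrac{1}{p^*}f_t^2/f$ is equivalent to $h_{tt}=0$, whose solutions are the straight lines $h(t)=h_0+t\,h_1$. Because $x\mapsto x^{1/p}$ and $x\mapsto x^p$ are smooth on $(0,\infty)$, composition with them preserves $H^k(M)$ for $k>\dim(M)/2$, so $f\mapsto f^{1/p}$ is a diffeomorphism of $\Densk$ onto the open positive cone in $H^k(M)$. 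The $\Densk$-solution therefore exists exactly as long as $h_0+t\,h_1$ stays pointwise positive, i.e.\ on the interval $\{t:\min_M(h_0+t\,h_1)>0\}$, which depends only on the pointwise values of $h_0,h_1$ and hence — via $H^k\hookrightarrow C^0$ — is identical for every $k$ and for smooth data. Intersecting over all $k$ gives existence and uniqueness on the same interval in $\Dens=\bigcap_k\Densk$, which is the desired Ebin--Marsden-type conclusion. An alternative that avoids the explicit transform is a standard bootstrap: differentiating the equation in space and estimating the top-order term by Gronwall, using the algebra property together with the already-established $H^k$ control, one shows that the $H^{k+1}$ norm cannot blow up before the $H^k$ norm, whence $T_{k+1}=T_k$.
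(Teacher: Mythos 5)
Your derivation of the geodesic equation and your treatment of local wellposedness on $\Densk$ match the paper: both read the equation off Lemma~\ref{lem:alphacon}, rewrite it as a first-order ODE on the open positive cone of $H^k(M)$ using the Banach-algebra property for $k>\operatorname{dim}(M)/2$, and invoke Picard--Lindel\"of. Where you genuinely diverge is the no-loss-no-gain step. The paper observes that the right-hand side $F(\mu,\mu_t)=\mu^{-1}\mu_t^2$ is equivariant under pullback by $\Diff$ and then invokes an adaptation of the Ebin--Marsden no-loss-no-gain theorem (following \cite{bauer2022smooth}), whose key input is that Lie derivatives along coordinate vector fields coincide with ordinary derivatives in a chart. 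You instead exploit the explicit integrability of the equation: the substitution $h=f^{1/p}$ (with $\tfrac{1}{p^*}=\tfrac{p-1}{p}$) linearizes it to $h_{tt}=0$, so by uniqueness the $H^k$-solution is $\bigl(h_0+th_1\bigr)^p$ and its maximal interval is exactly $\{t:\min_M(h_0+th_1)>0\}$, a purely pointwise condition that is visibly independent of $k$ via $H^k\hookrightarrow C^0$. Your computation checks out, and it is consistent with (indeed anticipates) the paper's own Theorem~\ref{thm:proot} and Corollary~\ref{cor:Lpgeometry}, where the same $p$-root transform produces the explicit geodesics and the blow-up criterion $\frac{a}{\dx}<0$ somewhere. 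What each approach buys: yours is more elementary and self-contained, requiring no appeal to the Ebin--Marsden machinery, but it hinges entirely on the equation being explicitly solvable; the paper's equivariance argument is structural and would survive for geodesic equations of other $\Diff$-invariant objects that admit no closed-form solution. One small point worth making explicit in your argument: you should note that the maximal interval in $\Densk$ cannot end before $\min_M(h_0+th_1)$ reaches zero, because the explicit solution shows the $H^k$-norm stays finite up to that time — otherwise "maximal interval equals positivity interval" is an assertion, not a conclusion. Your fallback Gronwall bootstrap is also a standard viable alternative, closer in spirit to the paper's route.
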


\begin{proof}
The formula for the geodesic equation follows directly from Lemma~\ref{lem:alphacon}. To show the local well-posedness we view the geodesic equation~\eqref{eq:geo_alphacon} as a flow equation on $T\Densk$. Therefore let $F(\mu_t)$ denote the right hand side of the geodesic equation, i.e., 
\begin{equation}
F(\mu,\mu_t)=\mu^{-1}\mu_t^2
\end{equation}
where we use the identification of $\Densk$ with the space of positive, Sobolev functions $H^k_+(M)$ and $T_{\mu}\Densk$ with all of $H^k(M)$. Using the Sobolev  module properties and the positivity of $\mu$ it follows that $F$ is a smooth map from $H^k_+(M)\times H^k(M)$ and thus the local well-posedness follows by the theorem of Picard-Lindel\"off. Next, we observe that $F$ is equivariant under the action of the diffeomorphism group $\Diff(M)$, i.e., $F(\varphi^* \mu,\varphi^* \mu_t)=\varphi^*F( \mu,\mu_t)$. Thus the result on the uniformness of the maximal interval of existence follows by an adaption of the Ebin-Marsden no-loss-no-gain theorem~\cite[Lemma 12.2]{ebin1970groups} to the present setting, i.e., the diffeomorphism group acting on densities. This can be achieved by following the proof in~\cite{bauer2022smooth}, where the no-loss-no-gain result has been extended to the action of  diffeomorphisms on the space of all Riemannian metrics. The
key ingredient for this result is the fact that, in a chart, Lie derivatives along coordinate vector fields coincide with ordinary derivatives. 
\end{proof}

\subsection{The $L^p$-Fisher-Rao metric}
Next we introduce the main object of the present article, the $L^p$-Fisher-Rao (Finsler) metric on the space $\Dens$:
\begin{definition}
Given $\mu\in \Dens$ and $a\in T_\mu\Dens$ we define the $L^p$-Fisher-Rao metric via:
\begin{align}\label{lp-fisher-rao}
F_p(a):=F_p({\mu},a)=\left(\int\Abs{\frac{a}{\mu}}^p\mu\right)^{\frac{1}{p}}.
\end{align}
\end{definition}
\begin{remark}
It is easy to see that the $L^p$-Fisher-Rao metric satisfies the axioms of a Finsler metric, as defined in Definition~\ref{def:Finsler}, for any $p\in(1,\infty)$. We will, however, see in~Lemma~\ref{lem:Hessian}, that it is not strongly convex if $p\neq 2$. 
\end{remark}

First we will show, that the family of $L^p$-Fisher-Rao metrics shares an important property with the Fisher-Rao metric: they are invariant under the action of the diffeomorphism group as defined in \eqref{eq:pushforward}.
\begin{lemma}\label{lem:invariance}
For any $p\in(1,\infty)$, the $L^p$-Fisher-Rao metric on the space of $\Dens$ is invariant under the action of the diffeomorphism group $\Diff$, i.e.,
\begin{equation}
F_p(\mu,a)=F_p(\varphi_*\mu,\varphi_*a),\qquad a\in T_\mu \Dens,\;\varphi\in\Diff.
\end{equation}
\end{lemma}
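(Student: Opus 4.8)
The plan is to reduce the statement to two elementary facts about the pushforward action on densities: the naturality of the Radon--Nikodym derivative, and the invariance of integration of top-degree forms under diffeomorphisms. Recall that for an $n$-density $\mu$ the pushforward is $\varphi_*\mu = (\varphi^{-1})^*\mu$, and that pullback by a diffeomorphism is a ring homomorphism on functions that commutes with multiplication of a function by a density.

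The key step I would establish first is the naturality of the Radon--Nikodym derivative. Writing $f := \frac{a}{\mu} \in C^\infty(M)$, so that $a = f\mu$, I would compute
\begin{equation*}
\varphi_* a = (\varphi^{-1})^*(f\mu) = \big((\varphi^{-1})^*f\big)\,\big((\varphi^{-1})^*\mu\big) = (f\circ\varphi^{-1})\,\varphi_*\mu,
\end{equation*}
from which it follows that $\frac{\varphi_* a}{\varphi_* \mu} = f\circ\varphi^{-1} = \frac{a}{\mu}\circ\varphi^{-1}$. In particular $\Abs{\frac{\varphi_* a}{\varphi_* \mu}}^p = \Abs{\frac{a}{\mu}}^p\circ\varphi^{-1}$, and applying the same multiplication identity to the density $\Abs{f}^p\mu$ gives $\Abs{\frac{\varphi_* a}{\varphi_* \mu}}^p\,\varphi_*\mu = \varphi_*\!\big(\Abs{f}^p\mu\big)$.

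The second step is then to integrate and invoke $\int_M \varphi_*\omega = \int_M \omega$ for any density $\omega$ (since $\varphi_*\omega = (\varphi^{-1})^*\omega$ and integration of top-degree forms is invariant under pullback by a diffeomorphism). This yields
\begin{equation*}
F_p(\varphi_*\mu,\varphi_* a)^p = \int \Abs{\tfrac{\varphi_* a}{\varphi_* \mu}}^p\,\varphi_*\mu = \int \varphi_*\!\big(\Abs{f}^p\mu\big) = \int \Abs{\tfrac{a}{\mu}}^p\mu = F_p(\mu,a)^p,
\end{equation*}
and taking $p$-th roots finishes the argument.

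I do not expect any genuine obstacle here; the only point requiring care is bookkeeping of the pushforward convention $\varphi_* = (\varphi^{-1})^*$ and making sure the Radon--Nikodym derivative is interpreted as a smooth function (guaranteed by positivity of $\mu$), so that the multiplicativity of pullback applies cleanly. The same computation works verbatim under the Sobolev regularity used elsewhere, and indeed for $M$ merely a measurable space with $\varphi$ a measure isomorphism, so the smooth hypotheses play no essential role.
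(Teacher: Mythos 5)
Your proof is correct and is precisely the "direct computation using the transformation formula for integrals" that the paper's one-line proof invokes: naturality of the Radon--Nikodym derivative under pushforward, $\frac{\varphi_*a}{\varphi_*\mu}=\frac{a}{\mu}\circ\varphi^{-1}$, followed by invariance of the integral of a density under $\varphi_*$. No gaps; the approach matches the paper's.
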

\begin{proof}
This result follows by direct computation using the transformation formula for integrals.
\end{proof}

Next we calculate the geodesic equations of this family of Finsler metrics on 
$\Dens$.
\begin{theorem}[Geodesic equation on $\Dens$]\label{thm:geo_lp_dens}
For any $p\in(1,\infty)$, the geodesic equation of the $L^p$-Fisher-Rao metric on the space of densities $\Dens$ is given by
\begin{align}\label{eq:geodesicDens} 
\frac{d}{dt}\left(\frac{\mu_t}{\mu}\right)+\frac{1}{p}\left(\frac{\mu_t}{\mu}\right)^2=0,
\end{align}
which coincides with the geodesic equation of the $\alpha$-connection for $\alpha=1-\frac{2}{p}$.
Thus the local well-posedness result of Theorem~\ref{thm:geo_alpha_dens} also hold for the geodesic equation of the $L^p$-Fisher-Rao metric.
\end{theorem}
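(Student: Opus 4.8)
The plan is to derive the geodesic equation of the Finsler metric $F_p$ by a direct first-variation computation, and then to verify algebraically that it is equivalent to equation~\eqref{eq:geo_alphacon}. Since $F_p$ is a Finsler rather than a Riemannian metric, I would work with the energy functional rather than the length, i.e.\ minimize $E(\mu)=\frac1p\int_0^1 F_p(\mu,\mu_t)^p\,dt = \frac1p\int_0^1\int_M \Abs{\frac{\mu_t}{\mu}}^p\mu\,dt$ over paths with fixed endpoints. Using the pointwise identification of $\Dens$ with positive functions via $\mu\mapsto\mu/\dx$ (as suggested after Lemma~\ref{lem:alphacon}), the integrand becomes a genuine function and the variation reduces to an elementary calculation. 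The cleanest route, however, is to invoke Theorem~\ref{thm:proot}: since $F_p$ is the pullback of the $L^p$-norm under the $p$-root transform $\Phi_p(\mu)=(\mu/\dx)^{1/p}$, and geodesics of the $L^p$-norm on the flat space $L^p(\dx)$ are straight lines (so that $f_{tt}=0$ for $f=\Phi_p(\mu)$), one can simply set $f=\mu^{1/p}$ (writing $\mu$ for $\mu/\dx$) and compute $f_{tt}=0$ explicitly in terms of $\mu$.

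First I would carry out this pullback computation. Writing $f=\mu^{1/p}$, one has $f_t=\frac1p\mu^{1/p-1}\mu_t=\frac1p\mu^{1/p}\frac{\mu_t}{\mu}$, and differentiating once more,
\begin{equation*}
f_{tt}=\frac1p\mu^{1/p}\left[\frac{d}{dt}\!\left(\frac{\mu_t}{\mu}\right)+\frac1p\left(\frac{\mu_t}{\mu}\right)^2\right].
\end{equation*}
Since $\mu>0$, the condition $f_{tt}=0$ is equivalent to the bracketed expression vanishing, which is precisely equation~\eqref{eq:geodesicDens}. This establishes the stated geodesic equation directly from the flat geometry of $L^p(\dx)$, bypassing the variational computation entirely. (Alternatively, if one prefers not to rely on Theorem~\ref{thm:proot} here, the same equation follows from the Euler--Lagrange equation of $E$; I would note that for a Finsler energy one must check that the minimizer condition yields this second-order ODE, which it does because $F_p^p$ is smooth and positively homogeneous of degree $p$ in $\mu_t$.)

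Next I would verify the coincidence with the $\alpha$-connection geodesic equation. Expanding equation~\eqref{eq:geodesicDens} using $\frac{d}{dt}(\mu_t/\mu)=\mu_{tt}/\mu-(\mu_t/\mu)^2$ gives
\begin{equation*}
\frac{\mu_{tt}}{\mu}-\left(\frac{\mu_t}{\mu}\right)^2+\frac1p\left(\frac{\mu_t}{\mu}\right)^2=0,
\end{equation*}
that is, $\mu_{tt}=\left(1-\frac1p\right)\frac{\mu_t^2}{\mu}=\frac{1}{p^*}\frac{\mu_t}{\mu}\mu_t$, using $1-\frac1p=\frac1{p^*}$ for Hölder conjugates. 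This is exactly equation~\eqref{eq:geo_alphacon}, and the relation $p^*=\frac{2}{1+\alpha}$ together with $p=\frac{2}{1-\alpha}$ gives $\alpha=1-\frac2p$, as claimed. The final sentence, transferring local well-posedness, then follows immediately: the two geodesic equations are literally the same ODE on $T\Densk$, so Theorem~\ref{thm:geo_alpha_dens} applies verbatim and nothing further needs to be proved.

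I do not anticipate a serious obstacle here; the statement is essentially a clean algebraic identity once the right transform is in hand. The only point requiring a little care is the justification that minimizing the Finsler energy $E$ (or equivalently $F_p$-length) genuinely produces a second-order geodesic ODE despite $F_p$ failing to be strongly convex for $p\neq 2$ (noted in the remark before Lemma~\ref{lem:Hessian}); relying on the pullback formulation sidesteps this concern, since straight lines are unambiguously the $L^p$-norm geodesics and the $p$-root transform is a diffeomorphism onto its image. Thus the cleanest exposition defers the subtlety of non-strong-convexity to the flat-space side, where it is immaterial.
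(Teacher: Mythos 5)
Your proof is correct, but it takes a genuinely different route from the paper's. The paper proves Theorem~\ref{thm:geo_lp_dens} by a direct first-variation argument: it reduces length minimization to minimization of the $p$-energy via reparametrization invariance and H\"older's inequality, computes the Euler--Lagrange equation $p\frac{d}{dt}\bigl(\Abs{\tfrac{\mu_t}{\mu}}^{p-2}\tfrac{\mu_t}{\mu}\bigr)+(p-1)\Abs{\tfrac{\mu_t}{\mu}}^p=0$ by integrating by parts in $t$, and then simplifies. You instead lead with the $p$-root transform, using the isometry of Theorem~\ref{thm:proot} and the fact that constant-speed geodesics of $(L^p,\|\cdot\|_{L^p})$ are affinely parametrized straight lines (by strict convexity of $L^p$ for $p\in(1,\infty)$), and then observe that $f_{tt}=0$ for $f=(\mu/\dx)^{1/p}$ is literally equation~\eqref{eq:geodesicDens}; your computation of $f_{tt}$ and the algebraic identification with \eqref{eq:geo_alphacon} are both correct. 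Two remarks on the trade-off. First, your route uses Theorem~\ref{thm:proot}, which appears \emph{after} Theorem~\ref{thm:geo_lp_dens} in the paper; its proof is independent of the geodesic equation, so there is no circularity, but an exposition following your order would have to move the transform forward (the paper essentially runs your argument later, in Corollary~\ref{cor:Lpgeometry}, to get the explicit solution formulas). Second, your approach has a genuine advantage: the paper's simplification from its Euler--Lagrange equation to \eqref{eq:geodesicDens} implicitly divides by $\Abs{\mu_t/\mu}^{p-2}$, which is delicate where $\mu_t$ vanishes, whereas $f_{tt}=0$ gives the clean equation with no such division. What the paper's variational computation buys in exchange is reusability: the same calculation of $\delta E_p$ is quoted verbatim in the proof of Theorem~\ref{thm:LpFR_geo_prob} on $\Prob$, where the constraint $\int_M\mu=1$ introduces the Lagrange-multiplier term $C(t)$ and the flat pullback picture no longer applies directly. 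Your parenthetical fallback via the Euler--Lagrange equation is the paper's actual argument, though as written it is only sketched; if you rely on the pullback route as your primary argument, nothing further is needed.
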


\begin{proof}
The length functional of the $L^p$-Fisher-Rao metric on $\Dens$ is given by
\begin{align*}
L(\mu)=\int_0^1 \left(\int\Abs{\frac{\mu_t}{\mu}}^p\mu\right)^{\frac{1}{p}} dt,   
\end{align*}
where $\mu: [0,1]\to \Dens$ such that $\mu(0)=\mu_0$, $\mu(1)=\mu_1$ and where $\mu_t$ denotes its (time) derivative. 
A geodesic is a path that locally minimizes the length functional; since $L$ is invariant to reparametrization, we can restrict ourselves to paths of constant speed.
By the H\"older inequality, it follows that constant speed geodesics are equivalently the local minimizers of the $q$-energy 
\begin{align*}
E_q(\mu)=\frac1q\int_0^1 \left(\int\Abs{\frac{\mu_t}{\mu}}^p\mu\right)^{\frac{q}{p}} dt,   
\end{align*}
for any $q> 1$.
In our case the most convenient choice is to consider the $q$-Energy with $q=p$. 
The corresponding energy functional reads as
\begin{align*}
E_p(\mu)=\frac1p\int_0^1 \int\Abs{\frac{\mu_t}{\mu}}^p\mu \,dt.   
\end{align*}
Calculating the variation of the $p$-energy functional in direction $\delta \mu$ leads to
\begin{equation}\label{eq:deltaE}
\begin{aligned}
\delta E_p(\mu)(\delta \mu)&=\frac{1}{p}\int_0 ^1 \int  p\Abs{\frac{\mu_t}{\mu}}^{p-2}\frac{\mu_t}{\mu}\delta \mu_t-(p-1)\Abs{\frac{\mu_t}{\mu}}^p  \delta \mu \, \ud\dx\,dt\\ 
&=-\frac{1}{p}\int_0 ^1 \int  \left( p\frac{d}{dt}\left(\Abs{\frac{\mu_{t}}{\mu}}^{p-2}\frac{\mu_t}{\mu}\right)+(p-1)\Abs{\frac{\mu_t}{\mu}}^p \right) \, \delta \mu \, \ud\dx\,dt,
\end{aligned}
\end{equation}
where we used integration by parts in time $t$ and that the variational direction vanishes at the end points, i.e., $\delta \mu(0)=\delta\mu(1)=0$.
From here we can immediately read off the geodesic equation
\begin{align*} 
p\frac{d}{dt}\left(\Abs{\frac{\mu_{t}}{\mu}}^{p-2}\frac{\mu_t}{\mu}\right)+(p-1)\Abs{\frac{\mu_t}{\mu}}^p=0.
\end{align*}
which can be simplified to the desired formula. That this equation coincides with the geodesic equation of the $\alpha$-connection can be seen by comparing it to the equation of Theorem~\ref{thm:geo_alpha_dens}. 
\end{proof}

Next we will study the Finslerian geometry induced by the $L^p$-Fisher-Rao metric (see Appendix~\ref{sec:finsler} for a short overview of the main definitions). We will see in the next Lemma, that the $L^p$-Fisher-Rao metric is, in general, not strongly convex and thus some of the calculations in this and the next sections have to be understood formally.

\begin{lemma}[The Hessian matrix]\label{lem:Hessian}
Let $\mu\in\Dens$ and $\nu,a,b\in T_\mu\Dens$. The Hessian matrix $g^{\nu}$ of the squared $L^p$-Fisher-Rao metric at $\nu$ is given by
    \begin{equation}\label{riem-metric}
        g_\mu^\nu(a,b)=(p-1)I(\nu,\nu)^{\frac{2}{p}-1}I(a,b) - (p-2)I(\nu,\nu)^{\frac{2}{p}-2}I(\nu,a)I(\nu, b).
    \end{equation}
    where
    \begin{equation}
    \begin{aligned}
        I(a,b)&:=I_\mu^\nu(a,b):=\int \left|\frac{\nu}{\mu}\right|^{p-2}\frac{a}{\mu}\frac{b}{\mu}\mu\;.
    \end{aligned}   
    \end{equation}
If $\nu$ is nowhere zero  than  $g^\nu$ is positive definite and thus a Riemannian metric. If $\nu$ vanishes on an open set $\mathcal U\subset M$ then, for $p>2$, $g^\nu$ is degenerate as it vanishes for all $a,b\in T_\mu\Dens$ with support contained in $\mathcal U$, and for $p<2$ it is not well-defined.
\end{lemma}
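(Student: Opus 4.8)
The plan is to compute the second derivative of the squared $L^p$-Fisher-Rao metric directly, and then analyze positivity via the Cauchy-Schwarz (Hölder) inequality. First I would set up the computation by introducing the weighted inner product $I(a,b)$ with weight $|\nu/\mu|^{p-2}$, which naturally appears upon differentiating $F_p^p$. Concretely, I would consider the function $t \mapsto F_p(\mu, \nu + s\,a + t\,b)^2$ and compute $\partial_s \partial_t$ at $s=t=0$; since $F_p^2 = \left(\int |\tfrac{\nu}{\mu}|^p \mu\right)^{2/p}$, this is a composition of the $(2/p)$-power with the functional $\nu \mapsto \int |\nu/\mu|^p \mu$. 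Applying the chain rule and the product rule, the first power-law factor produces the $(p-1)I(\nu,\nu)^{2/p-1}I(a,b)$ term from the second derivative of $|\cdot|^p$, while the $(p-2)$ term arises from differentiating the outer power $(\cdot)^{2/p}$ together with the cross terms $I(\nu,a)I(\nu,b)$. This is the routine but bookkeeping-heavy part, where one must track the constants $p-1$ and $2/p-1$ carefully.

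For the positivity claim when $\nu$ is nowhere zero, I would fix $a \in T_\mu\Dens$, $a \neq 0$, and evaluate the quadratic form $g_\mu^\nu(a,a)$. The key observation is that $I(\cdot,\cdot)$ is a genuine positive-definite inner product on functions (weighted by $|\nu/\mu|^{p-2}\mu$, which is a positive measure precisely because $\nu$ is nowhere zero), so the Cauchy-Schwarz inequality gives $I(\nu,a)^2 \le I(\nu,\nu)\,I(a,a)$. Substituting this into \eqref{riem-metric} for $a=b$, the bound $-(p-2)I(\nu,\nu)^{2/p-2}I(\nu,a)^2 \ge -(p-2)I(\nu,\nu)^{2/p-1}I(a,a)$ (valid in the relevant sign configuration) combines with the leading term to yield $g_\mu^\nu(a,a) \ge I(\nu,\nu)^{2/p-1} I(a,a) > 0$, since the coefficients $(p-1)$ and $-(p-2)$ sum to $1$. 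One should handle the cases $p>2$ and $p<2$ separately because the sign of $(p-2)$ flips, but in both regimes the sum-to-one of the coefficients ensures positivity.

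For the degeneracy and ill-definedness statements on an open set $\mathcal{U}$ where $\nu$ vanishes, the analysis is more delicate and is where I expect the main subtlety to lie. When $p>2$ the weight $|\nu/\mu|^{p-2}$ vanishes on $\mathcal{U}$, so $I(a,b)=0$ whenever $a,b$ are supported in $\mathcal{U}$; moreover $I(\nu,a)=0$ as well since $\nu$ itself vanishes there, hence both terms of \eqref{riem-metric} vanish and $g^\nu$ is degenerate in these directions. When $p<2$ the exponent $p-2<0$ makes $|\nu/\mu|^{p-2}$ blow up where $\nu=0$, so the integral defining $I(a,b)$ need not converge for general $a,b$, and the expression is not well-defined. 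I would make this precise by exhibiting a test function supported in $\mathcal{U}$ for which the integrand is non-integrable near the zero set of $\nu$.

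The main obstacle I anticipate is ensuring that the interchange of differentiation and integration is justified near the zero set of $\nu$, particularly in the borderline regularity where $|\nu/\mu|^{p-2}$ fails to be integrable; away from the zero set everything is smooth and the manipulations are standard, so the genuine content is the careful case analysis in $p$ together with the degeneracy/ill-definedness discussion, rather than the positive-definite case, which follows cleanly from Cauchy-Schwarz and the fact that $(p-1)-(p-2)=1$.
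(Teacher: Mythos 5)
Your proposal follows essentially the same route as the paper: a direct second-derivative computation of $F_p^2(\mu,\nu+sa+tb)$ via the chain rule to obtain \eqref{riem-metric}, followed by the Cauchy--Schwarz inequality $I(\nu,a)^2\le I(\nu,\nu)I(a,b)|_{b=a}$ for the weighted inner product to get $g^\nu(a,a)\ge I(\nu,\nu)^{2/p-1}I(a,a)>0$ when $\nu$ is nowhere zero, and the same sign/integrability discussion of the weight $|\nu/\mu|^{p-2}$ for the degenerate and ill-defined cases. This matches the paper's proof, so no further comparison is needed.
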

\begin{proof}
We introduce the notations 
\begin{align*}
\tilde\omega=\tilde\omega(r,s):=\frac{\nu}{\mu}+r\frac{a}{\mu}+s\frac{b}{\mu},
\end{align*}
\begin{align*}
\omega:=|\tilde\omega(r,s)|=\left|\frac{\nu}{\mu}+r\frac{a}{\mu}+s\frac{b}{\mu}\right|.
\end{align*}
To compute the Hessian matrix of $F_p^2(\mu,\nu)$ we need to calculate the second derivative in $r$ and $s$ of $F_p^2(\mu,\omega)$.
We have
\begin{align*}
\partial_r F_p^2(\mu,\omega)= 2\left(\int\omega^p\mu\right)^{2/p-1} \int\omega^{p-1} \partial_r \omega \ \mu
=2            
\left(\int\omega^p\mu\right)^{2/p-1} \int\omega^{p-1} \operatorname{sgn} (\tilde\omega)\frac{a}{\mu} \ \mu.
\end{align*}
For the second derivative we get
\begin{multline*}
\partial_s \partial_r F_p^2(\mu,\omega)
=2(2-p)  \left(\int\omega^p\mu\right)^{2/p-2}  \int\omega^{p-1} \operatorname{sgn} (\tilde\omega)\frac{a}{\mu} \ \mu \int\omega^{p-1} \operatorname{sgn} (\tilde\omega)\frac{b}{\mu} \ \mu\\+
2(p-1) \left(\int\omega^p\mu\right)^{2/p-1} \int\omega^{p-2} \frac{a}{\mu} \frac{b}{\mu} \ \mu.
\end{multline*}  
Evaluating at $r=s=0$ yields the desired formula for $g^{\nu}_\mu$.

For $\nu\neq 0$ we can use the Cauchy-Schwarz inequality to prove the positive-definiteness of the Hessian:
\begin{align*}
\left(\int\Abs{\frac{\nu}{\mu}}^{p-1} \Abs{\frac{a}{\mu}} \ \mu\right)^2\leq \int\Abs{\frac{\nu}{\mu}}^p\mu \int\Abs{\frac{\nu}{\mu}}^{p-2} \left(\frac{a}{\mu}\right)^2 \ \mu.
\end{align*}
Then we get the inequality
\begin{align*}
g^{\nu}(a,a)\geq\left(\int\Abs{\frac{\nu}{\mu}}^p\mu\right)^{2/p-1} \int\Abs{\frac{\nu}{\mu}}^{p-2} \left(\frac{a}{\mu}\right)^2 \ \mu.
\end{align*}
Thus for $\nu$ being a nowhere vanishing vector field,
$g^{\nu}(a,a)=0$ implies that $\frac{a}{\mu}=0$.
\end{proof}

\begin{lemma}[The Cartan tensor]
    Let $\mu\in\Dens$ and $\nu,a,b,c\in T_\mu\Dens$. The Cartan tensor  of the $L^p$-Fisher-Rao metric is given by
    \begin{equation}\label{cartan-tensor}
        \begin{aligned}
        C^\nu(a,b,c) =& \frac{1}{2}(p-1)(p-2)I(\nu,\nu)^{\frac{2}{p}-3}\Big(2I(\nu,a)I(\nu,b)I(\nu,c) \\
        &- I(\nu,\nu)I(\nu,a)I(b,c) - I(\nu,\nu)I(\nu,b)I(c,a)\\
        &- I(\nu,\nu)I(\nu,c)I(a,b) + I(\nu,\nu)^2J(\nu,a,b,c)\Big),
        \end{aligned}
    \end{equation}
    where
    \begin{equation}\label{h-and-l}
    \begin{aligned}
        I(a,b)&:=I_\mu^\nu(a,b):=\int \Abs{\frac{\nu}{\mu}}^{p-2}\frac{a}{\mu}\frac{b}{\mu}\mu,\\
        J(a,b,c)&:=J_\mu^\nu(a,b,c):=\int\Abs{\frac{\nu}{\mu}}^{p-4}\frac{a}{\mu}\frac{b}{\mu}\frac{c}{\mu}\mu.
    \end{aligned}
    \end{equation}
\end{lemma}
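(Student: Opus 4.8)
The plan is to obtain the Cartan tensor as the fiber derivative of the fundamental tensor of Lemma~\ref{lem:Hessian}. The Cartan tensor is one quarter of the third fiber derivative of $F_p^2$, while $g^\nu$ is one half of the second fiber derivative $\partial_r\partial_s F_p^2$ (as computed in the proof of Lemma~\ref{lem:Hessian}); hence
\begin{equation*}
C^\nu(a,b,c)=\frac12\,\frac{d}{dt}\Big|_{t=0}\, g^{\nu+tc}_\mu(a,b),
\end{equation*}
and I would simply differentiate the closed formula \eqref{riem-metric} along $\nu\mapsto\nu+tc$ and collect terms. The entire $\nu$-dependence of $g^\nu$ sits in the bilinear forms $I=I^\nu_\mu$, which involve $\nu$ both through their explicit slots and through the weight $\Abs{\nu/\mu}^{p-2}$, so once the basic derivatives of $I$ are known the rest is product- and chain-rule bookkeeping.

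First I would record the three elementary fiber derivatives that drive everything. Differentiating under the integral sign and using the pointwise identities $\tfrac{d}{dt}\Abs{x+ty}^{p}\big|_0=p\Abs{x}^{p-2}xy$, $\tfrac{d}{dt}\big(\Abs{x+ty}^{p-2}(x+ty)\big)\big|_0=(p-1)\Abs{x}^{p-2}y$, and $\tfrac{d}{dt}\Abs{x+ty}^{p-2}\big|_0=(p-2)\Abs{x}^{p-4}xy$, I expect to find
\begin{align*}
\frac{d}{dt}\Big|_0 I(\nu,\nu)&=p\,I(\nu,c),\\
\frac{d}{dt}\Big|_0 I(\nu,a)&=(p-1)\,I(a,c),\\
\frac{d}{dt}\Big|_0 I(a,b)&=(p-2)\,J(\nu,a,b,c),
\end{align*}
where the derivative acts on every occurrence of $\nu$, and $J(\nu,a,b,c)=\int\Abs{\nu/\mu}^{p-4}\frac{\nu}{\mu}\frac{a}{\mu}\frac{b}{\mu}\frac{c}{\mu}\,\mu$ is the natural four-argument form of \eqref{h-and-l}. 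The first line moreover gives $\tfrac{d}{dt}I(\nu,\nu)^{\beta}=\beta\,p\,I(\nu,\nu)^{\beta-1}I(\nu,c)$ for any exponent $\beta$.

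With these in hand the remainder is bookkeeping: apply the product and chain rules to the two summands of \eqref{riem-metric}, with exponents $\beta=\tfrac2p-1$ and $\beta=\tfrac2p-2$. I would use the cancellations $(\tfrac2p-1)p=-(p-2)$ and $(\tfrac2p-2)p=-2(p-1)$, which make the common factor $(p-1)(p-2)$ appear in every term; factoring out $I(\nu,\nu)^{2/p-3}$ and multiplying by $\tfrac12$ should then collect exactly the five terms of \eqref{cartan-tensor}. Concretely, I expect the weight derivative of $I(a,b)$ in the first summand to supply the $J(\nu,a,b,c)$ term with coefficient $I(\nu,\nu)^{2/p-1}$; differentiating the prefactor of the second summand to supply the cubic term $2I(\nu,a)I(\nu,b)I(\nu,c)$; and the three mixed terms $-I(\nu,\nu)I(\nu,\cdot)I(\cdot,\cdot)$ to come from differentiating the prefactor of the first summand and the explicit slots of the second.

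The step I expect to be the real obstacle is analytic, not algebraic. The pointwise derivatives of $\Abs{x}^{p-2}$ and $\Abs{x}^{p-4}$ break down at $x=0$, and for $p<4$ the weight $\Abs{\nu/\mu}^{p-4}$ entering $J$ need not even be integrable where $\nu$ vanishes, so differentiation under the integral sign is not justified there. As already signalled after Lemma~\ref{lem:Hessian}, I would therefore present \eqref{cartan-tensor} as a formal identity that is rigorous whenever $\nu$ is nowhere vanishing; in that case all weights are smooth and bounded away from $0$ and $\infty$, and each manipulation above is legitimate.
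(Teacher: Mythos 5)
Your proposal is correct and takes essentially the same route as the paper: the paper's proof is exactly the third fiber derivative of $F_p^2$ (one quarter of $\partial_r\partial_s\partial_t\big|_{0}F_p^2(\mu,\nu+ra+sb+tc)$, computed "similarly to the Hessian"), which is precisely your $\tfrac12\frac{d}{dt}\big|_{0}g^{\nu+tc}_\mu(a,b)$ applied to the closed-form Hessian \eqref{riem-metric}. The elementary derivatives of $I$ you record and the subsequent bookkeeping do reproduce \eqref{cartan-tensor}, and your caveat that the identity is only formal where $\nu$ vanishes is consistent with the paper's own remarks on the lack of strong convexity.
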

\begin{proof}
This formula can be derived similarly as the formula for the Hessian
by computing
$$C^\nu(a,b,c)=\left.\partial_r\partial_s\partial_t\right|_{r=s=t=0}F_p(\mu,\tilde\omega)$$
where $\tilde \omega(r,s,t)=\nu+ra+sb+tc$.
\end{proof}

\subsection{The $\alpha$-connection as Chern connection of the $L^p$-Fisher-Rao metric}
Next we will show that the Chern connection associated to the $L^p$-Fisher-Rao metric on $\Dens$ is an $\alpha$-connection, when two entries are taken to be the same. 
\begin{theorem}[The Chern connection on $\Dens$]\label{thm:chern_dens}
    Let $\alpha=1-\frac{2}{p}$. For every nowhere vanishing vector field $\nu$ on $\Dens$ and any $a\in T_\mu\Dens$, we have
    \begin{equation}\label{chern-dens}
    \nabla_a^\nu\nu=\nabla^{(\alpha)}_a\nu,
    \end{equation}
    where $\nabla^\nu$ is the Chern connection induced by the $L^p$-Fisher-Rao metric and $\nabla^{(\alpha)}$ is the $\alpha$-connection on $\Dens$ defined by \eqref{alpha-dens}.
\end{theorem}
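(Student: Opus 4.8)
The plan is to avoid computing the full Chern connection and instead exploit the general Finsler‑geometric fact that, when one differentiates the \emph{reference field itself}, the Chern connection collapses onto the nonlinear connection. In the notation of Appendix~\ref{sec:finsler}, the Chern connection coefficients $\Gamma^i_{jk}(\mu,\nu)$ are symmetric in $j,k$ and satisfy the standard identity $\Gamma^i_{jk}(\mu,\nu)\,\nu^j = N^i_k(\mu,\nu)$, where $N$ is the nonlinear connection determined by $F_p$. Reading off $(\nabla^\nu_a\nu)^i = a(\nu^i)+\Gamma^i_{jk}(\mu,\nu)\nu^j a^k$, this gives
\[
\nabla^\nu_a\nu = D\nu.a + N(a),
\]
with $N(a)$ the nonlinear connection at reference $\nu$ applied to $a$. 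The point of this reduction is that every Cartan‑tensor correction that normally enters the Chern connection and that would involve the direction $a$ or the test vector drops out, because the Cartan tensor is totally orthogonal to the reference direction: from the explicit formula \eqref{cartan-tensor} one checks directly that $C^\nu(\nu,b,c)=0$ for all $b,c$, using $J(\nu,\nu,b,c)=I(b,c)$ and collecting the five terms.

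It remains to identify $N$, which is the vertical derivative of the geodesic spray, $N(a)=\partial_\nu G(\mu,\nu)[a]$ where $2G$ is the spray coefficient. But the spray is already known: by Theorem~\ref{thm:geo_lp_dens} (equivalently \eqref{eq:geo_alphacon}) the geodesic equation reads $\mu_{tt}=\frac{1}{p^*}\frac{\mu_t}{\mu}\mu_t$, so writing it in spray form $\mu_{tt}+2G(\mu,\mu_t)=0$ yields $2G(\mu,\nu)=-\frac{1}{p^*}\frac{\nu}{\mu}\nu$, i.e. $G(\mu,\nu)=-\frac{1}{2p^*}\frac{\nu^2}{\mu}$. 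Differentiating in $\nu$ gives $N(a)=\partial_\nu G(\mu,\nu)[a]=-\frac{1}{p^*}\frac{a}{\mu}\nu$, and substituting into the displayed identity produces
\[
\nabla^\nu_a\nu = D\nu.a - \frac{1}{p^*}\frac{a}{\mu}\nu,
\]
which is exactly $\nabla^{(\alpha)}_a\nu$ from \eqref{alpha-dens} for $\alpha=1-\frac{2}{p}$, so that $p^*=\frac{2}{1+\alpha}$. As an internal consistency check, setting $a=\nu$ recovers $\nabla^\nu_\nu\nu=D\nu.\nu+2G$, the geodesic operator, in agreement with Theorem~\ref{thm:geo_lp_dens}.

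A more pedestrian alternative, if one prefers to bypass the spray formalism, is to argue through the almost‑compatibility/Koszul formula of Appendix~\ref{sec:finsler}. Since $g^\nu$ is a genuine weak Riemannian metric whenever $\nu$ is nowhere vanishing (Lemma~\ref{lem:Hessian}), it is nondegenerate and it suffices to verify $g^\nu(\nabla^\nu_a\nu,c)=g^\nu(\nabla^{(\alpha)}_a\nu,c)$ for all $c$. On the left one expands the Koszul expression, where the vanishing $C^\nu(\nu,\cdot,\cdot)=0$ eliminates two of the three Cartan defect terms and one differentiates the integrals $I(\nu,\nu)$ and $I(\nu,a)$ explicitly; on the right one inserts $\nabla^{(\alpha)}_a\nu=D\nu.a-\frac{1}{p^*}\frac{a}{\mu}\nu$ straight into the quadratic form $g^\nu$ of \eqref{riem-metric} and compares.

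The main obstacle is foundational rather than computational: one must ensure that the Finsler objects invoked (Chern connection, nonlinear connection, spray, and the identity $\Gamma^i_{jk}\nu^j=N^i_k=\partial_\nu G$) are legitimately available in this infinite‑dimensional, merely weakly Riemannian setting, where $F_p$ is not strongly convex (Lemma~\ref{lem:Hessian}). Restricting to nowhere‑vanishing $\nu$ is precisely what makes $g^\nu$ nondegenerate, so that $\nabla^\nu$ is well defined and the manipulations above are valid; as emphasized before Lemma~\ref{lem:Hessian}, the remaining steps are to be read formally. Everything else is routine pointwise bookkeeping.
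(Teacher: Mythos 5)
Your conclusion and final formula agree with the paper, but your main argument follows a genuinely different route. The paper proves the theorem by brute force: it takes $\nabla^{(\alpha)}_a\nu = D\nu.a-\frac{p-1}{p}\frac{a}{\mu}\frac{\nu}{\mu}\mu$ as a candidate and verifies the generalized Koszul formula of Lemma~\ref{lem:koszul} term by term, expanding $ag^\nu(\nu,b)$, $\nu g^\nu(a,b)$, $bg^\nu(a,\nu)$, the bracket terms, and the single surviving Cartan term $C^\nu(\nabla^\nu_\nu\nu,b,a)$ --- a page of bookkeeping with the tensors $I$, $J$, $K$. You instead invoke the spray/nonlinear-connection identity $\Gamma^i_{jk}\nu^j=N^i_k=\partial_{\nu^k}G^i$, which reduces the whole theorem to reading the spray off the already-known geodesic equation of Theorem~\ref{thm:geo_lp_dens} and differentiating it in $\nu$; your auxiliary claims ($C^\nu(\nu,\cdot,\cdot)=0$, $2G=-\frac{1}{p^*}\frac{\nu^2}{\mu}$, $N(a)=-\frac{1}{p^*}\frac{a}{\mu}\nu$, and $\frac{1}{p^*}=\frac{p-1}{p}$ so the result matches \eqref{alpha-dens}) all check out. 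What your route buys is brevity and a conceptual explanation of \emph{why} the Chern connection with repeated reference entry must reproduce the $\alpha$-connection: both are governed by the same spray. What it costs is that the identity $\Gamma^i_{jk}\nu^j=\partial_{\nu^k}G^i$ is nowhere established in this paper's framework --- Appendix~\ref{sec:finsler} characterizes the Chern connection only through almost-metric-compatibility and the Koszul formula, and never introduces sprays or nonlinear connections --- and its finite-dimensional proof uses the invertibility of $g^\nu$, which here holds only weakly and only for nowhere-vanishing $\nu$ (Lemma~\ref{lem:Hessian}). You flag this honestly, and your fallback ``pedestrian alternative'' is essentially the paper's proof; but as written, a reader who insists on staying inside the paper's (already formal) setting would have to either derive the spray identity from the Koszul formula first or carry out the direct verification after all.
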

\begin{proof}
Formula \eqref{chern-dens} defines the Chern connection if and only if it verifies the generalized Koszul formula (see Lemma \ref{lem:koszul})
\begin{align*}
2g^\nu(\nabla^\nu_a\nu, b) &= ag^\nu(\nu, b)+\nu g^\nu(b, a) - bg^\nu(a,\nu) + g^\nu([a,\nu],b) - g^\nu([\nu,b],a) + g^\nu([b,a],\nu)\\
&-2C^\nu(\nabla^\nu_a\nu, \nu, b) - 2C^\nu(\nabla^\nu_\nu\nu, b, a) + 2C^\nu(\nabla^\nu_\nu b,a,\nu).
\end{align*}
Since the Cartan tensor verifies $C^\nu(\nu,\cdot,\cdot)=0$ this formula reduces to
\begin{equation}\label{koszul2}
\begin{aligned}
2g^\nu(\nabla^\nu_a\nu, b) &= ag^\nu(\nu, b)+\nu g^\nu(b, a) - bg^\nu(a,\nu) \\
&+ g^\nu([a,\nu],b) - g^\nu([\nu,b],a) + g^\nu([b,a],\nu) -2C^\nu(\nabla^\nu_\nu\nu, b, a).
\end{aligned}
\end{equation}
To compute the first terms of the right hand-side of this equality, we will need 
$$cI(a,b)=I(D a.c,b)+I(a,D b.c)-(p-1)K(a,b,c)+(p-2)J(\nu,a,b,D \nu.c),$$
where $I$ and $J$ are defined by \eqref{h-and-l}, and
$$K(a,b,c):=K^\nu_\mu(a,b,c)=\int \Abs{\frac{\nu}{\mu}}^{p-2}\frac{a}{\mu}\frac{b}{\mu}\frac{c}{\mu}\mu.$$
Using this we get
\begin{align*}
ag^\nu(\nu,b)&=-\frac{p-2}{p}I(\nu,\nu)^{2/p-2}I(\nu,b)\left(pI(\nu,D\nu.a)-(p-1)K(\nu,\nu,a)\right)\\
&+I(\nu,\nu)^{2/p-1}\left(I(\nu,D b.a)-(p-1)K(\nu,a,b)+(p-1)I(b,D\nu.a)\right),\\
bg^\nu(\nu,a)&=-\frac{p-2}{p}I(\nu,\nu)^{2/p-2}I(\nu,a)\left(pI(\nu,D\nu.b)-(p-1)K(\nu,\nu,b)\right)\\
&+I(\nu,\nu)^{2/p-1}\left(I(\nu,D a.b)-(p-1)K(\nu,a,b)+(p-1)I(a,D\nu.b)\right),
\end{align*}
and
\begin{align*}
&\nu g^\nu(a,b)=-\frac{(p-1)(p-2)}{p}I(\nu,\nu)^{2/p-2}I(a,b)\left(pI(\nu,D\nu.\nu)-(p-1)K(\nu,\nu,\nu)\right)\\
&+(p-1)I(\nu,\nu)^{2/p-1}\left(I(D a.\nu,b)+I(a,D b.\nu)-(p-1)K(\nu,a,b)+(p-2)J(\nu,a,b,D\nu.\nu)\right)\\
&+\frac{2(p-1)(p-2)}{p}I(\nu,\nu)^{2/p-3}I(\nu,a)I(\nu,b)\left(pI(\nu,D\nu.\nu)-(p-1)K(\nu,\nu,\nu)\right)\\
&-(p-2)I(\nu,\nu)^{2/p-2}I(\nu,b)\left(I(\nu,D a.\nu) - (p-1)K(\nu,\nu,a) + (p-1)I(a,D\nu.\nu)\right)\\
&-(p-2)I(\nu,\nu)^{2/p-2}I(\nu,a)\left(I(\nu,D b.\nu) - (p-1)K(\nu,\nu,b) + (p-1)I(b,D\nu.\nu)\right).
\end{align*}
The following terms of the right hand-side of the generalized Koszul formula \eqref{koszul2} are given by
\begin{align*}
g^\nu([a,\nu],b)&=(p-1)I(\nu,\nu)^{2/p-1}I(D\nu.a-D a.\nu,b)\\
&-(p-2)I(\nu,\nu)^{2/p-2}I(\nu,D\nu.a-D a.\nu)I(\nu,b),\\
g^\nu([\nu,b],a)&=(p-1)I(\nu,\nu)^{2/p-1}I(D b.\nu-D\nu.b,a)\\
&-(p-2)I(\nu,\nu)^{2/p-2}I(\nu,D b.\nu-D\nu.b)I(\nu,a),\\
g^\nu([b,a],\nu)&=I(\nu,\nu)^{2/p-1}I(\nu,D a.b-D b.a).
\end{align*}
Finally there remains to compute the two terms involving the Chern connection, i.e. the term on the left hand-side and the last term of the right hand-side. With the chosen value of $\alpha$, we have
$$\nabla^\nu_a\nu=D\nu.a-\frac{p-1}{p}\frac{a}{\mu}\frac{\nu}{\mu}\mu,$$
and so
\begin{align*}
I(\nabla^\nu_a\nu,b)&=I(D\nu.a,b)-\frac{p-1}{p}K(\nu,a,b)\\
J(\nu,a,b,\nabla^\nu_\nu\nu)&=J(\nu,a,b,D\nu.\nu)-\frac{p}{p-1}K(\nu,a,b).
\end{align*}
This yields, using \eqref{cartan-tensor},
\begin{align*}
2C^\nu(\nabla^\nu_\nu\nu,b,a)&=(p-1)(p-2)I(\nu,\nu)^{2/p-3}\\
    &\quad \cdot\bigg[2I(\nu,a)I(\nu,b)\left(I(\nu,D\nu.\nu) -\frac{p-1}{p}K(\nu,\nu,\nu)\right) \\ 
    &\qquad-I(\nu,\nu)I(\nu,a)\left(I(b,D\nu.\nu) - \frac{p-1}{p}K(\nu,\nu,b)\right)\\
    &\qquad-I(\nu,\nu)I(\nu,b)\left(I(a,D\nu.\nu)- \frac{p-1}{p}K(\nu,\nu,a)\right) \\
    &\qquad-I(\nu,\nu)I(a,b)\left(I(\nu,D\nu.\nu) - \frac{p-1}{p}K(\nu,\nu,\nu)\right)\\
    &\qquad +I(\nu,\nu)^2\left(J(\nu,a,b,D\nu.\nu)-\frac{p-1}{p}K(\nu,a,b)\right)\bigg].
\end{align*}
Putting all the terms together yields the left hand-side of the generalized Koszul formula \eqref{koszul2}, i.e.
\begin{align*}
2g^\nu(\nabla^\nu_a\nu,b)&=2(p-1)I(\nu,\nu)^{2/p-1}\left(I(D\nu.a,b)-\frac{p-1}{p}K(\nu,a,b)\right)\\
&\quad-2(p-2)I(\nu,\nu)^{2/p-2}I(\nu,b)\left(I(D\nu.a,\nu)-\frac{p-1}{p}K(\nu,\nu,a)\right).
\end{align*}
\end{proof}
As a direct consequence of the above characterization of the $\alpha$-connections as a Chern connection we obtain that these connections have an interpretation as describing energy minimizing curves:
\begin{corollary}
Let $\alpha\in (-1,1)$. Geodesic curves of the $\alpha$-connection describe locally minimizing curves of the $\frac{2}{1-\alpha}$-Energy 
\begin{align*}
E_{\frac{2}{1-\alpha}}(\mu)=\frac{1-\alpha}{2}\int_0^1 \int\Abs{\frac{\mu_t}{\mu}}^{\frac{2}{1-\alpha}}\mu\, dt.   
\end{align*}
\end{corollary}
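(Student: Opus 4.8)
The plan is to read the statement as the assertion that the affinely parametrized $\nabla^{(\alpha)}$-geodesics are exactly the locally $E_p$-minimizing curves with $p=\frac{2}{1-\alpha}$, and to obtain it by chaining together the identifications already established. First I would record the elementary bookkeeping: for $\alpha\in(-1,1)$ we have $p=\frac{2}{1-\alpha}\in(1,\infty)$ with $\frac{1-\alpha}{2}=\frac1p$, so that $E_{\frac{2}{1-\alpha}}$ is literally the functional $E_p(\mu)=\frac1p\int_0^1\int\Abs{\frac{\mu_t}{\mu}}^p\mu\,dt$ appearing in the proof of Theorem~\ref{thm:geo_lp_dens}. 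Thus the corollary is equivalent to the statement that solutions of $\nabla^{(\alpha)}_{\mu_t}\mu_t=0$ are local minimizers of $E_p$.

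For the core identification I would use Theorem~\ref{thm:chern_dens} with $\nu=\mu_t$: along any curve $\mu$ it gives $\nabla^{(\alpha)}_{\mu_t}\mu_t=\nabla^{\mu_t}_{\mu_t}\mu_t$, so that the $\alpha$-geodesics are precisely the geodesics of the Chern connection of $F_p$. By the Finsler theory recalled in Appendix~\ref{sec:finsler}, Chern geodesics are the extremals of the Finsler length, and after passing to constant speed (using that $L$ is reparametrization invariant) these are exactly the extremals of $E_p$; equivalently, and more directly, Theorem~\ref{thm:geo_lp_dens} shows that the $\nabla^{(\alpha)}$-geodesic equation \eqref{eq:geo_alphacon} and the $F_p$-geodesic equation \eqref{eq:geodesicDens} coincide, while the first variation \eqref{eq:deltaE} identifies \eqref{eq:geodesicDens} with the Euler--Lagrange equation $\delta E_p=0$. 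Either route shows that $\alpha$-geodesics are critical points of $E_p$.

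The step that requires genuine care — and which I expect to be the main obstacle — is upgrading \emph{critical point} to \emph{locally minimizing}, since $F_p$ is not strongly convex for $p\neq2$ (Lemma~\ref{lem:Hessian}) and the standard Finsler minimization arguments are therefore only formal. The clean way around this is to exploit convexity of $E_p$ directly. Writing $u:=\mu/\dx>0$ for the Radon--Nikodym density, the integrand of $E_p$ (with respect to $\dx\,dt$) equals $\Abs{\frac{u_t}{u}}^p u=u\,\phi\!\left(\frac{u_t}{u}\right)$ with $\phi(z)=\Abs{z}^p$ convex; this is the perspective of $\phi$ and hence is jointly convex in $(u,u_t)$. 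Consequently $E_p$ is a convex functional on the convex set of positive-density paths with fixed endpoints, so each of its critical points is in fact a global, and a fortiori local, minimizer. Combining this with the critical-point identification of the previous paragraph yields the corollary; I would close by remarking that on $\Prob$ this convexity is lost, consistent with the fact that there the $F_p$- and $\alpha$-geodesics no longer coincide.
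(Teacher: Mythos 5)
Your proposal is correct, and it is in fact stronger than what the paper does. The paper offers no proof of this corollary at all: it is presented as an immediate consequence of Theorem~\ref{thm:chern_dens} (the identification of $\nabla^{(\alpha)}$ with the Chern connection of $F_p$), with the passage from Chern geodesics to energy extremals supplied implicitly by Lemma~\ref{lem:finsler-chern} — or, more directly, by Theorem~\ref{thm:geo_lp_dens}, whose first variation \eqref{eq:deltaE} exhibits the $\alpha$-geodesic equation as the Euler--Lagrange equation of $E_p$. You correctly identify that this only yields \emph{criticality}, and that the general Finsler machinery (Lemma~\ref{lem:finsler-chern}) is anyway only formal here since $F_p$ fails strong convexity for $p\neq 2$ (Lemma~\ref{lem:Hessian}); the ``locally minimizing'' clause is thus left unjustified in the paper. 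Your perspective-function argument — that $\Abs{\mu_t/\mu}^p\mu$, written as $|u_t|^p u^{1-p}$ with $u=\mu/\dx>0$, is the perspective of the convex function $|z|^p$ and hence jointly convex in $(u,u_t)$, so that $E_p$ is convex on the convex set of positive paths with fixed endpoints and every critical point is a global minimizer — closes this gap cleanly. (The same conclusion can be reached via the $p$-root transform of Theorem~\ref{thm:proot}: under $\Phi_p$ the functional $E_p$ becomes a constant multiple of the standard, manifestly convex $p$-energy $\frac1p\int_0^1\int |f_t|^p\,\ud\dx\,dt$ on $L^p$, whose minimizers are constant-speed straight lines, consistent with Corollary~\ref{cor:Lpgeometry}.) Two minor caveats: invoking Theorem~\ref{thm:chern_dens} with $\nu=\mu_t$ requires $\mu_t$ nowhere vanishing, so the direct Euler--Lagrange route via Theorem~\ref{thm:geo_lp_dens} is the safer of your two alternatives; and one should state whether ``geodesic'' means affinely parametrized, since $E_p$-minimizers are the constant-speed reparametrizations of length minimizers — but this matches the paper's own conventions.
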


\subsection{The $p$-root transform}
Next, we will isometrically map the space of densities to a simpler space, which will allow us to obtain explicit expressions for solutions to the geodesic equation; we call this construction, which is a direct generalization of the square-root transform for the Fisher-Rao metric, the $p$-root transform. At the same time the $p$-root transform presents an alternative way to define the $\alpha$-connection. This has been first proposed by Gibilisco and Pistone~\cite{gibilisco1998connections}, who considered this construction specifically for the  space $\Prob$ albeit with slightly different notations and a different identification of a tangent vector with a function. 
\begin{theorem}\label{thm:proot}
Endow the space $C^\infty(M)$ of smooth functions with the standard $L^p$-norm and with the trivial vector space connection $\ntr$, i.e., for two vector fields $\xi,\eta:C^\infty(M) \to C^\infty(M)$,
\[
\ntr_\xi\eta = D\eta.\xi.
\]
Let $\alpha\in (-1,1)$ and, as before, denote $p = \frac{2}{1-\alpha}$.
Define the map $\Phi_p: \Dens \to C^\infty(M)$ by
\begin{equation}\label{eq:ptransform}
\Phi_p(\mu) = \left(\frac{\mu}{\dx}\right)^{1/p}.
\end{equation}
We have:
\begin{enumerate}[label=(\alph*)]
\item The image $\Phi_p(\Dens)$  is 
the set of all positive functions in $C^{\infty}(M)$. 
\item \label{proot_FR} The mapping $\Phi_p$  
is an isometric embedding, where $\Dens$ is equipped with a multiple of the $L^p$-Fisher-Rao metric and where $C^{\infty}(M)$ is viewed as a vector space equipped with the standard $L^p$-norm.
\item \label{proot_alphaCon}The pullback of $\Phi_p^*\ntr$ coincides with $\nabla^{(\alpha)}$ up to a constant depending only on the footpoint:
\[
(\Phi_p^*\ntr)_a b|_\mu = \frac{1}{p}\left(\frac{\mu}{\dx}\right)^{\frac{1}{p}-1} \nabla^{(\alpha)}_a b|_\mu,
\qquad \mu\in \Dens, \, a,b\in \mathfrak{X}(\Dens)
\]
In particular, the geodesics of $\Phi_p^*\ntr$ and $\nabla^{(\alpha)}$ coincide.
\end{enumerate}
\end{theorem}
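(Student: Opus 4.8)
The plan is to handle the three claims in increasing order of difficulty, throughout using the identification of $\Dens$ with the positive functions in $C^\infty(M)$ via $\mu\mapsto\mu/\dx$, under which a tangent vector $a\in T_\mu\Dens$ becomes the function $a/\dx$ and $\Phi_p$ is simply $f\mapsto f^{1/p}$. The first computation I would record is the Fr\'echet derivative, which is the multiplication operator
\[
D\Phi_p(\mu)[a] = \tfrac1p\left(\tfrac{\mu}{\dx}\right)^{1/p-1}\tfrac{a}{\dx}.
\]
Part (a) is then immediate: $f\mapsto f^{1/p}$ is a bijection of the positive smooth functions onto themselves with smooth inverse $g\mapsto g^p$, so $\Phi_p(\Dens)$ is exactly the positive cone in $C^\infty(M)$. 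This also shows $\Phi_p$ is a diffeomorphism onto an open subset, and since $D\Phi_p(\mu)$ is multiplication by a positive function it is a linear isomorphism, giving the immersion/embedding property needed for (b).

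For (b) I would substitute the formula for $D\Phi_p$ into the $L^p$-norm and compute, using $\frac{a}{\mu}=\frac{a/\dx}{\mu/\dx}$ and $\mu=(\mu/\dx)\,\dx$, that
\[
\left\|D\Phi_p(\mu)[a]\right\|_{L^p(\dx)}^p = \frac{1}{p^p}\int\left(\frac{\mu}{\dx}\right)^{1-p}\Abs{\frac{a}{\dx}}^p\dx = \frac{1}{p^p}\int\Abs{\frac{a}{\mu}}^p\mu = \frac{1}{p^p}F_p(\mu,a)^p,
\]
so that $\Phi_p$ pulls the ambient $L^p$-norm back to $\tfrac1p F_p$, the asserted multiple of the $L^p$-Fisher-Rao metric.

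The substance is in (c). Recall that for a diffeomorphism onto its image and the trivial connection the relevant covariant derivative is governed by the second derivative of $\Phi_p$: pushing $a,b$ forward and applying the product rule $\ntr_\xi\eta=D\eta.\xi$ gives
\[
\ntr_{\Phi_{p*}a}\Phi_{p*}b = D\Phi_p[Db.a] + D^2\Phi_p[a,b].
\]
I would then compute $D^2\Phi_p(\mu)[a,b]=\frac{1-p}{p^2}\left(\frac{\mu}{\dx}\right)^{1/p-2}\frac{a}{\dx}\frac{b}{\dx}$ and factor out the multiplier $D\Phi_p(\mu)=\tfrac1p\left(\tfrac{\mu}{\dx}\right)^{1/p-1}$, obtaining
\[
\ntr_{\Phi_{p*}a}\Phi_{p*}b = D\Phi_p\Big[Db.a - \tfrac{p-1}{p}\,\tfrac{a}{\mu}\,b\Big] = D\Phi_p\big[\nabla^{(\alpha)}_a b\big],
\]
where the last equality uses $\tfrac1{p^*}=\tfrac{p-1}{p}$ for $\alpha=1-\tfrac2p$ (so $p^*=\tfrac{p}{p-1}$) together with Lemma~\ref{lem:alphacon}. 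Since $D\Phi_p(\mu)$ is exactly multiplication by $\tfrac1p\left(\tfrac{\mu}{\dx}\right)^{1/p-1}$, this is precisely the stated identity. The main obstacle is bookkeeping, not conceptual: one must correctly identify the cross term $D^2\Phi_p[a,b]$ — which is exactly what produces the nonlinear correction $-\tfrac1{p^*}\tfrac a\mu b$ of the $\alpha$-connection — and keep careful track of the footpoint-dependent multiplier distinguishing the two identifications of tangent vectors with functions (via $/\dx$ versus via $D\Phi_p$). Finally, the "in particular" follows because $D\Phi_p$ is a fibrewise linear isomorphism: a curve $\mu(t)$ satisfies $\nabla^{(\alpha)}_{\mu_t}\mu_t=0$ if and only if $D\Phi_p\big(\nabla^{(\alpha)}_{\mu_t}\mu_t\big)=(\Phi_p\circ\mu)''=0$, i.e.\ if and only if $\Phi_p\circ\mu$ is a straight line, which is exactly the $\ntr$-geodesic equation; hence the $\Phi_p^*\ntr$- and $\nabla^{(\alpha)}$-geodesics coincide.
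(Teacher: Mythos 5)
Your proof is correct and follows essentially the same route as the paper: compute $D\Phi_p$, pull back the $L^p$-norm for part (b), and expand $\ntr_{\Phi_{p*}a}\Phi_{p*}b$ by the product rule for part (c) --- your explicit isolation of the cross term as $D^2\Phi_p[a,b]$ is just a tidier organization of the same calculation. The only point worth flagging is the footpoint-dependent prefactor: with the standard convention $(\Phi_p^*\ntr)_a b=(T\Phi_p)^{-1}\ntr_{T\Phi_p(a)}T\Phi_p(b)$, your identity $\ntr_{\Phi_{p*}a}\Phi_{p*}b=D\Phi_p\big[\nabla^{(\alpha)}_a b\big]$ actually yields $(\Phi_p^*\ntr)_a b=\nabla^{(\alpha)}_a b$ with no prefactor, so the factor $\tfrac1p\left(\tfrac{\mu}{\dx}\right)^{1/p-1}$ in the statement is an artifact of which identification of tangent vectors with functions one uses (via $\cdot/\dx$ versus via $D\Phi_p$) --- a point you correctly flag, and one that is immaterial for the coincidence of geodesics since the factor is nowhere vanishing.
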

Note that geodesics of the trivial connection on a vector space are always straight lines; in particular, this proposition allows us to obtain geodesics of the $L^p$-Fisher-Rao metric (of the $\alpha$-connection, resp.) by pulling-back straight lines in $C^\infty(M)$ using $\Phi_p$. We will use this in Corollary~\ref{cor:Lpgeometry} below to explicitly describe the resulting formulas on $\Dens$. First we present the proof of the above theorem, which is a fairly straightforward calculation:
\begin{proof}[Proof of Theorem~\ref{thm:proot}]
The characterization of the image of $\Phi_p$ follows directly from the definition of $\Dens$. 
To show item~\ref{proot_FR} we calculate for $\mu\in \Dens$ and $a\in T_\mu\Dens$ the differential of $\Phi_p$:
 \begin{align*}
D_\mu\Phi_p(a)=\frac1p\frac{a}{\dx}\left(\frac{\mu}{\dx}\right)^{1/p-1}.
\end{align*}
Therefore the pullback of the $L^p$-norm via the embedding $\Phi_p$ is given by 
 \begin{align*}
\| D_\mu\Phi_p(a)\|_{L^p}=\frac1p\left(\int_M\left|D_\mu\Phi_p(a)\right|^{p}\ud\dx\right)^{1/p}=\frac1p\left(\int_M\Abs{\frac{a}{\mu}}^p\mu\right)^{1/p}=\frac1pF_p(\mu,a),
\end{align*}
which implies that the embedding $\Phi_p$ is indeed an isometry.

Similarly we calculate for item~\ref{proot_alphaCon}
\[
\begin{split}
(\Phi_p^*\ntr)_a b|_\mu 
    &= (T \Phi_p)^{-1} \ntr_{T \Phi_p(a)}T \Phi_p(b)|_{\Phi_p(\mu)}\\
    &= (T \Phi_p)^{-1} \left(\frac{1}{p^2}\left(\frac{\mu}{\dx}\right)^{\frac{1}{p}-1}\left(\ntr_{a/\dx}\left(\frac{\nu}{\dx}\right)^{\frac{1}{p}-1}\frac{b}{\dx}\right)_{\mu=\nu}\right)\\
    &= \frac{1}{p}\left(\ntr_{a/\dx}\left(\frac{\nu}{\dx}\right)^{\frac{1}{p}-1}\frac{b}{\dx}\right)_{\mu=\nu}\dx\\
    &= \frac{1}{p}\left(\frac{\mu}{\dx}\right)^{\frac{1}{p}-1}\left(D(b/dx).(a/dx) + \left(\frac{1}{p}-1\right)\left(\frac{\mu}{\dx}\right)^{-1}\frac{a}{\dx}\frac{b}{\dx}\right)\dx \\
    &= \frac{1}{p}\left(\frac{\mu}{\dx}\right)^{\frac{1}{p}-1}\left(Db.a + \left(\frac{1}{p}-1\right)\frac{a}{\mu}b\right) 
    = \frac{1}{p}\left(\frac{\mu}{\dx}\right)^{\frac{1}{p}-1} \nablabar^{(\alpha)}_a b|_\mu.\\
\end{split}
\]
\end{proof}
The above theorem allows us to explicitly solve for geodesics on $\Dens$, which in turn leads to a proof of metric and geodesic incompleteness of the $L^p$-Fisher-Rao metric for any $p>1$. By the equivalence of geodesics for the $\alpha$-connections and for the $L^p$-Fisher-Rao metric the formulas for geodesics also hold for the former. In the finite dimensional setting this solution formula (via the $p$-root mapping) for the $\alpha$-geodesics is known albeit without any geometric interpretation, cf.~\cite[Page 50]{ay2017information}.
\begin{corollary}[The geometry of the $L^p$-Fisher-Rao metric]\label{cor:Lpgeometry}
For any $p>1$ we have the following statements:
\begin{enumerate}[label=(\alph*)]
    \item \label{prop:convex} The space $\Dens$ equipped with the $L^p$-Fisher-Rao metric (the $\alpha$-connection resp.) is  geodesically convex and, even more, there exists an explicit formula for all minimizing geodesics: given any $\mu_0,\mu_1\in \Dens$ the unique geodesic $\mu: [0,1]\to\Dens$ connecting $\mu_0$ to  $\mu_1$ is given by
     \begin{align*}
\mu(t)=\left(t\sqrt[p]{\tfrac{\mu_1}{\dx}}+(1-t)\sqrt[p]{\tfrac{\mu_0}{\dx}}\right)^p \dx.
\end{align*}
\item  Given any $\mu_0,\mu_1\in \Dens$ the geodesic distance of the $L^p$-Fisher-Rao metric is given by
\begin{equation*}
d(\mu_0,\mu_1) = \left( \int_M \left| \sqrt[p]{\tfrac{\mu_1}{\dx}}-\sqrt[p]{\tfrac{\mu_0}{\dx}}  \right| \dx\right)^{1/p}
\end{equation*}
In particular, the geodesic distance of the $L^p$-Fisher-Rao metric on $\Dens$ is non-degenerate.
    \item  For any initial conditions $\mu_0\in \Dens$ and $a\in T_\mu \Dens$ the unique $L^p$-Fisher-Rao geodesic ($\alpha$-connection geodesic, resp.) $\mu:[0,T)\to \Dens$
    defined on its maximal interval of existence $[0,T)$ is given by
    \begin{equation*}
    \mu(t)=\left(\sqrt[p]{\tfrac{\mu_0}{\dx}}+
    t\frac{a}{\dx}\left(\frac{\mu}{\dx}\right)^{1/p-1}\right)^p \dx.
    \end{equation*}
    The geodesic $\mu(t)$ exists for all time $t$, i.e., $T=\infty$, if and only if $\frac{a}{\dx}(x)\geq0$ for all $x\in M$.
    Thus the space $\Dens$ equipped with the $L^p$-Fisher-Rao metric is  geodesically incomplete since the solution to the geodesic equation~\eqref{eq:geodesicDens} leaves the space in finite time for any initial condition with $\frac{a}{\dx}(x)<0$ for some $x$. 
    \item \label{prop:metriccomplete} The space $\Dens$ equipped with the geodesic distance of the $L^p$-Fisher-Rao metric is  metrically incomplete.
    \item The metric completion of the space $\Dens$ with respect to the geodesic distance of the $L^p$-Fisher-Rao metric is the space of all non-negative $L^1$-densities:
    \begin{equation*}
       \operatorname{Dens}_{L^1}(M)=\left\{\mu: \frac{\mu}{\dx}\in L^1(M),  \frac{\mu}{\dx} \geq 0 \text{ a.e.}\right\} 
    \end{equation*}
    \end{enumerate}
\end{corollary}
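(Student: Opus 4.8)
The unifying idea is to transport every assertion through the $p$-root transform $\Phi_p$ of Theorem~\ref{thm:proot}, which identifies $\Dens$ (equipped with a multiple of $F_p$) isometrically with the open positive cone
\[
\mathcal{P} = \{ f \in C^\infty(M) : f > 0 \}
\]
sitting inside the flat Banach space $(L^p(\dx),\|\cdot\|_{L^p})$, and under which geodesics become straight segments. Two elementary facts about $\mathcal P$ drive everything: it is \emph{convex}, so the straight segment joining two positive functions stays positive and hence inside $\mathcal P$; and $\|\cdot\|_{L^p}$ is \emph{strictly convex} for $p\in(1,\infty)$, so that segment is the unique length-minimizer. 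Parts (a)--(c) are then read off from this picture.

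For part (a), the $\Phi_p$-image of the desired geodesic is the segment $t\mapsto (1-t)(\mu_0/\dx)^{1/p}+t(\mu_1/\dx)^{1/p}$ joining $\Phi_p(\mu_0)$ to $\Phi_p(\mu_1)$; applying $\Phi_p^{-1}(f)=f^p\dx$ yields the stated formula, with geodesic convexity coming from convexity of $\mathcal P$ and uniqueness from strict convexity. For part (b), since the ambient segments realize the distance and remain inside $\mathcal P$, the intrinsic geodesic distance on $\Dens$ coincides with the ambient $L^p$-distance of the images, i.e.\ (up to the constant relating $\|\cdot\|_{L^p}$ and $F_p$ in Theorem~\ref{thm:proot}) with $\|(\mu_1/\dx)^{1/p}-(\mu_0/\dx)^{1/p}\|_{L^p}$; injectivity of $\Phi_p$ and the fact that $\|\cdot\|_{L^p}$ is a genuine norm give non-degeneracy. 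For part (c), I would pull back the straight ray $t\mapsto \Phi_p(\mu_0)+t\,D_{\mu_0}\Phi_p(a)$; since $(\mu_0/\dx)^{1/p-1}>0$, the ray stays in $\mathcal P$ for all $t\ge 0$ exactly when $a/\dx\ge 0$ everywhere, and otherwise reaches the boundary $\{f=0\}$ in finite time, which is precisely the claimed incompleteness. A direct substitution into \eqref{eq:geodesicDens} confirms the explicit formula.

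The incompleteness (d) and the completion (e) again reduce to the $L^p$-geometry of $\mathcal P$. For (d) I would fix any non-negative $f\in L^p(\dx)\setminus\mathcal P$ (e.g.\ one vanishing on a ball, or merely non-smooth), approximate it by smooth positive $f_n\to f$ in $L^p$ via mollification plus a vanishing positive shift, and set $\mu_n=f_n^p\dx\in\Dens$; by the isometry $\{\mu_n\}$ is $d$-Cauchy but has no limit in $\Dens$, since its only candidate limit is $f^p\dx\notin\Dens$. For (e), the completion of $\Dens$ is isometric to the $L^p$-closure $\overline{\mathcal P}$, and I would prove $\overline{\mathcal P}=\{f\in L^p(\dx):f\ge 0\}$ by an $L^p$-limits-preserve-sign argument one way and the mollify-and-shift approximation the other. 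Transporting back along the extended inverse $f\mapsto f^p\dx$ then identifies $\{f\ge 0\}\subset L^p$ bijectively with $\operatorname{Dens}_{L^1}(M)$, using the equivalence $(\mu/\dx)^{1/p}\in L^p(\dx)\iff \int(\mu/\dx)\,\dx<\infty \iff \mu/\dx\in L^1(M)$, with the sign conditions matching.

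The main obstacle is part (e), and two points deserve care. First, one must justify that the geodesic (intrinsic) distance on $\Dens$ equals the \emph{ambient} $L^p$-distance on $\mathcal P$, not merely that $\Phi_p$ is an infinitesimal isometry; this is exactly where convexity of $\mathcal P$ and strict convexity of $L^p$ enter, ensuring ambient minimizing segments lie in $\mathcal P$, so that the completions of $\Dens$ and of $\mathcal P$ coincide. Second, the identification $\overline{\mathcal P}=\{f\in L^p:f\ge 0\}$ together with the change of variables $\{f\ge 0\}\leftrightarrow\operatorname{Dens}_{L^1}(M)$ rests on the density of smooth positive functions among non-negative $L^p$ functions and on the elementary equivalence $f\in L^p\iff f^p\in L^1$; these are routine but are precisely what pins the completion down to $L^1$-densities rather than to some intermediate space.
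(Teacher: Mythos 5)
Your proposal is correct and follows essentially the same route as the paper: both arguments transport everything through the isometry $\Phi_p$ of Theorem~\ref{thm:proot}, use that the image is an open convex subset of $(C^\infty(M),\|\cdot\|_{L^p})$ whose geodesics are straight segments, and identify the metric completion with the a.e.\ non-negative $L^p$ functions before applying $\Phi_p^{-1}$. The paper states this in two sentences, so your write-up simply supplies the details (strict convexity of $L^p$ for uniqueness, the mollify-and-shift density argument, the equivalence $f\in L^p \iff f^p\in L^1$) that the authors leave implicit.
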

\begin{proof}
Statements~\ref{prop:convex}--\ref{prop:metriccomplete} follow directly from the isometry of Theorem~\ref{thm:proot}, the fact that geodesics  on the vector space 
$(C^{\infty}(M),L^p)$ are straight lines and the characterization of the image of $\Phi_p$ as an open, convex subset of $C^{\infty}(M)$. To see the statement regarding the metric completion we observe that the metric completion of the image is exactly the set of a.e.~non-negative $L^p$-functions and thus the statement on the metric completion follows by applying $\Phi_p^{-1}$. 
\end{proof}

\section{The $L^p$-Fisher-Rao metric and $\alpha$-connections on the space of probability densities}\label{sec:prob}

The $L^p$-Fisher-Rao metric $F_p$ and the $\alpha$-divergence $D^{\alpha}$ define, via restriction, corresponding objects on $\Prob$, which we study in this section. 
In particular, we will see that $\Prob$ equipped with the $L^p$-Fisher-Rao metric corresponds geometrically to an infinite dimensional $L^p$-sphere. In addition we will see that the equivalence to the $\alpha$-connection, that has been established for the space of all densities in the previous section, does not hold on the space of probability densities. Consequently we obtain three different notions of $p$-geodesics on this space:
\begin{enumerate}
    \item geodesics of the restriction of the $L^p$-Fisher-Rao metric to $\Prob$; 
    \item \label{item:alphageo} geodesics of the $\alpha$-connections on $\Prob$;
    \item \label{item:projgeo} projections of $L^p$-Fisher-Rao geodesic curves (or equivalently, the $\alpha$-connection ones) on $\Dens$.
\end{enumerate}
In addition, if we allow to leave the space of probability densities, we obtain a fourth notion:
\begin{enumerate}[resume]
    \item $L^p$-Fisher-Rao geodesics in $\Dens$.
        In analogy to the $L^2$ case, the induced geodesic distance between probability densities defines an $L^p$ version of the Hellinger distance.
\end{enumerate}
We will show that \eqref{item:alphageo} and \eqref{item:projgeo} coincide, thereby providing an explicit formula for $\alpha$-geodesics on $\Prob$. For a graphic summary of these constructions we refer to Figure~\ref{fig:p_root}. 
In the next section we will compare the remaining three notions of geodesics numerically. 

\subsection{The Amari-\u{C}encov $\alpha$-connections on $\Prob$}
The restriction of the $\alpha$-divergences $D^{\alpha}$ to the space $\Prob$ induces again a family of $\alpha$-connections, which we will denote by  $\nablabar^{(\alpha)}$. Note, that this connection is not simply the restriction of the $\alpha$-connections on $\Dens$, which is the reason for choosing a different notation for it.
We start by deriving an explicit formula for the $\alpha$-connections on $\Prob$: 
\begin{lemma}
 For any $\alpha\in (-1,1)$ the $\alpha$-connections $\nablabar^{(\alpha)}$ on $\Prob$ are given by
    \begin{equation}\label{alpha-prob}
    \nablabar^{(\alpha)}_a b = Db.a - \frac{1}{p^*} \left(\frac{a}{\mu}b -\left(\int_M \frac{a}{\mu}\frac{b}{\mu} \mu\right)\mu\right).
    \end{equation}
Thus, the connection  $\nablabar^{(\alpha)}_a b$ on $\Prob$ is the projection of $\nabla^{(\alpha)}_a b$ with respect to the Fisher-Rao metric $\GFR$.    
\end{lemma}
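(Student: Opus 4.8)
The plan is to identify $\nablabar^{(\alpha)}$ with the $\GFR$-orthogonal projection onto $T\Prob$ of the ambient $\alpha$-connection $\nabla^{(\alpha)}$ from Lemma~\ref{lem:alphacon}, and then to compute this projection explicitly. By definition, $\nablabar^{(\alpha)}$ is induced by the restriction of $D^{(\alpha)}$ to $\Prob\times\Prob$ through the same relation \eqref{eq:a_conn_def}, now read for $a,b,c\in T_\mu\Prob$ with the requirement $\nablabar^{(\alpha)}_a b\in T_\mu\Prob$ and all derivatives taken along curves in $\Prob$. The first thing I would check is that this restriction does not alter the defining third-order expression except by narrowing the admissible test directions: the linear terms $p\int\nu+p^*\int\mu$ of $D^{(\alpha)}$ are constant on $\Prob\times\Prob$ and, in any case, have vanishing mixed derivative $\partial_\mu\partial_\nu$, while the term $-p^*p\int_M(\mu/\dx)^{1/p}(\nu/\dx)^{1/p^*}\dx$ is literally unchanged. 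Consequently the computation behind \eqref{eq:a_conn_def} carries over verbatim, yielding, for every $c\in T_\mu\Prob$,
\[
\GFR_\mu(\nablabar^{(\alpha)}_a b,c)=\int_M\frac{Db.a}{\mu}\frac{c}{\mu}\mu-\frac{1}{p^*}\int_M\frac{a}{\mu}\frac{b}{\mu}\frac{c}{\mu}\mu=\GFR_\mu(\nabla^{(\alpha)}_a b,c).
\]

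Next I would determine the $\GFR$-normal space of $T_\mu\Prob$ inside $T_\mu\Dens$. Writing $\GFR_\mu(n,c)=\int_M\frac{n}{\mu}c$, the condition $\GFR_\mu(n,c)=0$ for all $c$ with $\int_M c=0$ forces $\frac{n}{\mu}$ to be constant, so the normal line is spanned by the footpoint density $\mu$ itself (viewed as an element of $T_\mu\Dens=\Omega^n(M)$). Using $\GFR_\mu(\mu,\mu)=\int_M\mu=1$ and $\GFR_\mu(v,\mu)=\int_M v$, the orthogonal projection $\pi\colon T_\mu\Dens\to T_\mu\Prob$ is therefore $\pi(v)=v-(\int_M v)\,\mu$. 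Since $\nabla^{(\alpha)}_a b-\pi(\nabla^{(\alpha)}_a b)\in\R\mu$ is $\GFR$-orthogonal to $T_\mu\Prob$, the identity above gives $\GFR_\mu(\nablabar^{(\alpha)}_a b,c)=\GFR_\mu(\pi(\nabla^{(\alpha)}_a b),c)$ for all $c\in T_\mu\Prob$; as $\GFR$ restricts to a genuine (positive-definite) metric on $\Prob$ and both arguments lie in $T_\mu\Prob$, this forces $\nablabar^{(\alpha)}_a b=\pi(\nabla^{(\alpha)}_a b)$, establishing the projection statement.

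Finally I would substitute $\nabla^{(\alpha)}_a b=Db.a-\frac{1}{p^*}\frac{a}{\mu}b$ into $\pi$. Since $b$ is a vector field tangent to $\Prob$, differentiating $\int_M b\equiv 0$ in the direction $a$ gives $\int_M Db.a=0$, while $\int_M\frac{a}{\mu}b=\int_M\frac{a}{\mu}\frac{b}{\mu}\mu=\GFR_\mu(a,b)$; hence $\int_M\nabla^{(\alpha)}_a b=-\frac{1}{p^*}\int_M\frac{a}{\mu}\frac{b}{\mu}\mu$ and
\[
\pi(\nabla^{(\alpha)}_a b)=Db.a-\frac{1}{p^*}\left(\frac{a}{\mu}b-\left(\int_M\frac{a}{\mu}\frac{b}{\mu}\mu\right)\mu\right),
\]
which is exactly \eqref{alpha-prob}. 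The main point requiring care is the first step --- verifying that passing to the restricted divergence genuinely leaves the connection-defining expression intact and only restricts the set of test vectors $c$ --- together with the small but essential observation that the intrinsic derivative $Db.a$ of a $\Prob$-tangent vector field is again $\Prob$-tangent, which is what makes $\int_M Db.a=0$ and hence the clean projection formula valid.
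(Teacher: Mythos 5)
Your proposal is correct and follows essentially the same route as the paper: the restricted divergence yields the same defining identity \eqref{eq:a_conn_def} with test vectors now in $T_\mu\Prob$, which pins down $\nablabar^{(\alpha)}_a b$ only up to the $\GFR$-orthogonal complement of $T_\mu\Prob$ (the constant multiples of $\mu$), and the tangency requirement then forces the projection formula. Your version merely spells out the details the paper leaves implicit, in particular the verification that $\int_M Db.a=0$ and the explicit form $\pi(v)=v-\bigl(\int_M v\bigr)\mu$ of the projection.
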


For finite sample spaces this result is well-known (e.g., \cite[Section~2.5.2]{ay2017information});
in infinite dimensions formula \eqref{alpha-prob} agrees with the formula (22) in \cite{lenells2014amari}, under the identification of $\Prob = \Diff/\SDiff$.

\begin{proof}
To derive the formula for the $\alpha$-connection $\nablabar^{(\alpha)}$ we calculate the second derivative of the restriction of $D^{\alpha}$, which is given again by formula~\eqref{eq:a_conn_def} with the only difference being that $a,b,c\in T_\mu\Prob$. Thus we have determined $\nablabar^{(\alpha)}$ up to a function in the $\GFR$ orthogonal complement of $T_\mu\Prob$, which are exactly the constant multiples of $\mu$. Thus the formula follows by ensuring that  $\nablabar^{(\alpha)}_a b\in \Prob$. 
This argument also proves that  $\nablabar^{(\alpha)}_a b$ is the Fisher-Rao projection of $\nabla^{(\alpha)}_a b$.  
\end{proof}
\begin{theorem}\label{eq:alpha_conn_geo_Prob}
A path $\mu:[0,1]\to \Prob$ is a geodesic with respect to $\nabla^{(\alpha)}$ if
\[
\mu_{tt} - \frac{1}{p^*} \mu^{-1}\mu_t^2
=
- \frac{1}{p^*} \left(\int \left(\frac{\mu_t}{\mu}\right)^{2}\mu\right) \mu.
\]
For any $k>\operatorname{dim}(M)/2$ the geodesic equations are locally wellposed on the space of Sobolev probability densities $\Probk$, i.e., given initial conditions $\mu(0)\in \Probk$, $\mu_t(0)\in T_{\mu(0)}\Probk$ there exists an unique solution to equation~\eqref{eq:geo_alphacon} defined on a maximal interval of existence $[0,T)$. 
The maximal interval of existence is uniform in the Sobolev order $k$ and thus the local wellposedness continues to hold in the limit, i.e., on the space of smooth, probability densities $\Prob$.
\end{theorem}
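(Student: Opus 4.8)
The plan is to mirror the structure of the proof of Theorem~\ref{thm:geo_alpha_dens}, adapting it to the projected connection $\nablabar^{(\alpha)}$ on $\Prob$ while keeping track of the extra nonlocal term. First I would read off the geodesic equation directly from the explicit formula~\eqref{alpha-prob}: setting $a=b=\mu_t$ and using that along a curve the directional derivative $Db.a$ of the velocity field in its own direction is simply $\mu_{tt}$, the geodesic condition $\nablabar^{(\alpha)}_{\mu_t}\mu_t=0$ becomes
\begin{equation*}
\mu_{tt} - \frac{1}{p^*}\left(\frac{\mu_t}{\mu}\mu_t - \left(\int_M \left(\frac{\mu_t}{\mu}\right)^2\mu\right)\mu\right)=0,
\end{equation*}
which, writing $\frac{\mu_t}{\mu}\mu_t = \mu^{-1}\mu_t^2$, is exactly the asserted equation. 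Compared with the equation on $\Dens$ from Theorem~\ref{thm:geo_alpha_dens}, the only new feature is the nonlocal term $\frac{1}{p^*}\GFR_\mu(\mu_t,\mu_t)\,\mu$, which is the Fisher-Rao projection correction that keeps the acceleration tangent to $\Prob$.

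For the wellposedness I would recast the second-order equation as a first-order flow on $T\Probk$, writing the acceleration as
\begin{equation*}
F(\mu,\mu_t)=\frac{1}{p^*}\mu^{-1}\mu_t^2 - \frac{1}{p^*}\left(\int_M \left(\frac{\mu_t}{\mu}\right)^2\mu\right)\mu,
\end{equation*}
and identifying $\Probk$ with the positive $H^k$-functions of unit integral and $T_\mu\Probk$ with the $H^k$-functions of zero integral. The first thing to verify is that this flow is genuinely tangent to $\Probk$, i.e.\ that $\int_M F(\mu,\mu_t)=0$: since $\int_M \mu^{-1}\mu_t^2 = \int_M (\mu_t/\mu)^2\mu$ and $\int_M\mu=1$, the two terms cancel, confirming that $F$ takes values in $T_\mu\Probk$ and that the constraints $\int\mu=1$ and $\int\mu_t=0$ are preserved along the flow. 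Smoothness of $F$ as a map $H^k_+(M)\times H^k(M)\to H^k(M)$ then follows from the Sobolev module property and the positivity of $\mu$ for the local term $\mu^{-1}\mu_t^2$, exactly as in Theorem~\ref{thm:geo_alpha_dens}, together with the observation that the scalar $\int_M(\mu_t/\mu)^2\mu$ depends smoothly on $(\mu,\mu_t)$ because integration is a bounded linear functional precomposed with a smooth $H^k$-valued map. The theorem of Picard-Lindel\"off then yields local existence and uniqueness of a solution in $\Probk$ on a maximal interval $[0,T)$.

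It remains to show that $T$ is independent of $k$, which again follows from the Ebin-Marsden no-loss-no-gain argument used in Theorem~\ref{thm:geo_alpha_dens}. The key point is equivariance of $F$ under the pushforward action of $\Diff$: the local term is equivariant as in the $\Dens$ case, and the nonlocal factor $\int_M(\mu_t/\mu)^2\mu$ is precisely the squared Fisher-Rao norm of $\mu_t$, hence invariant by Lemma~\ref{lem:invariance} (the case $p=2$), so that the whole term, being an invariant scalar multiplying $\mu$, is equivariant as well. With equivariance in hand, the adaptation of~\cite[Lemma~12.2]{ebin1970groups} carried out in~\cite{bauer2022smooth} applies essentially verbatim, and passing to the limit $k\to\infty$ gives local wellposedness on the smooth space $\Prob$. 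The main---though ultimately harmless---obstacle to watch is precisely this nonlocal term: one must check that it neither breaks tangency (handled above) nor the regularity estimates underlying no-loss-no-gain. The latter is immediate because along the flow the nonlocal contribution is only a scalar multiple $c(t)\mu(t)$ of the current density and involves no spatial derivatives, so it cannot cause any loss of regularity.
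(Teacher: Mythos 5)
Your proposal is correct and follows essentially the same route as the paper, which simply reads off the equation from the formula for $\nablabar^{(\alpha)}$ and states that the wellposedness argument of Theorem~\ref{thm:geo_alpha_dens} carries over verbatim. You in fact supply more detail than the paper does — in particular the verification that the nonlocal correction term integrates to zero (so the flow is tangent to $\Probk$) and that it is $\Diff$-equivariant because $\int(\mu_t/\mu)^2\mu$ is the diffeomorphism-invariant Fisher--Rao norm — both of which are exactly the checks the paper leaves implicit.
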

\begin{proof}
The proof of the local wellposedness follows exactly as in Theorem~\ref{thm:geo_alpha_dens}.     
\end{proof}

\subsection{The $L^p$-Fisher-Rao metric on $\Prob$}
Next, we study the restriction of the $L^p$-Fisher-Rao metric to the space $\Prob$.
\begin{remark}[\u{C}encov's theorem]\label{rem:uniqueness}
Note that Lemma~\ref{lem:invariance} on the invariance of the $L^p$-Fisher-Rao metric continues to hold on the space $\Prob$. For the Riemannian case and $\operatorname{dim}(M)>1$ \u{C}encov's theorem states that the Fisher-Rao metric is the only Riemannian metric on $\Prob$ that is invariant under the action of the diffeomorphism group $\Diff$, cf.~\cite{cencov2000statistical,ay2015information,bauer2016uniqueness}. In the Finslerian case there is a significant amount of additional flexibility, and one can indeed construct metrics beyond the $L^p$-Fisher-Rao metric that satisfy this property. In future work it would be interesting to obtain a complete characterization of all such Finsler metrics. 
\end{remark}

We start by computing the geodesic equation of the (restriction) of the $L^p$-Fisher-Rao metric $F_p$ on $\Prob$:
\begin{theorem}[Geodesic equation on $\Prob$]\label{thm:LpFR_geo_prob}
For any $p\in(1,\infty)$, the geodesic equation of the $L^p$-Fisher-Rao metric on the space of densities $\Prob$ is given by
\begin{align}
\Abs{\frac{\mu_t}{\mu}}^{p-2}\frac{d }{dt}\left(\frac{\mu_t}{\mu}\right)+\frac{1}{p}\Abs{\frac{\mu_t}{\mu}}^p=C(t)
\end{align}
where $C(t)$ is  a constant depending only on time $t$, that is chosen such that $\int_M \mu(t) =1$.

This equation coincides with the geodesic equation of the $\alpha$-connection if and only if $p=2$ ($\alpha=0$, resp.).
\end{theorem}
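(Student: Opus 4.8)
The plan is to redo the first-variation computation from the proof of Theorem~\ref{thm:geo_lp_dens}, but now over curves constrained to lie in $\Prob$. As in that proof, the length functional is reparametrization invariant and, by H\"older's inequality, constant-speed geodesics of the restricted metric are exactly the critical points of the $p$-energy $E_p(\mu)=\frac1p\int_0^1\int\Abs{\frac{\mu_t}{\mu}}^p\mu\,\ud t$ among curves $\mu:[0,1]\to\Prob$ with fixed endpoints. The \emph{only} difference from the $\Dens$ case is that admissible variations $\delta\mu$ must now be tangent to $\Prob$, i.e. satisfy $\int_M\delta\mu(t)=0$ for every $t$ (in addition to vanishing at $t=0,1$). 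I would therefore take over verbatim the computation \eqref{eq:deltaE}, which gives
\[
\delta E_p(\mu)(\delta\mu)=-\frac1p\int_0^1\int\left(p\frac{\ud}{\ud t}\left(\Abs{\frac{\mu_t}{\mu}}^{p-2}\frac{\mu_t}{\mu}\right)+(p-1)\Abs{\frac{\mu_t}{\mu}}^p\right)\delta\mu\,\ud\dx\,\ud t,
\]
and then, instead of concluding that the integrand vanishes, invoke the fundamental lemma \emph{with} the linear constraint: at each time $t$ the Euler--Lagrange expression must be $L^2(\dx)$-orthogonal to all mean-zero functions, hence equal to a purely time-dependent Lagrange multiplier $\tilde C(t)$.

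Writing $w:=\frac{\mu_t}{\mu}$ and using the pointwise identity $\frac{\ud}{\ud t}\!\left(\Abs{w}^{p-2}w\right)=(p-1)\Abs{w}^{p-2}w_t$, dividing by $p(p-1)$ turns $p\frac{\ud}{\ud t}(\Abs{w}^{p-2}w)+(p-1)\Abs{w}^p=\tilde C(t)$ into the stated equation with $C(t)=\tilde C(t)/(p(p-1))$. To identify $C(t)$ I would use $\mu_{tt}=\mu(w_t+w^2)$ together with $w_t=C(t)\Abs{w}^{2-p}-\frac1p w^2$ read off from the equation; the constraint $\frac{\ud^2}{\ud t^2}\int_M\mu=\int_M\mu_{tt}=0$ then yields $C(t)=-\frac{p-1}{p}\big(\int_M w^2\mu\big)\big/\big(\int_M\Abs{w}^{2-p}\mu\big)$, which is exactly the spatial constant enforcing $\int_M\mu(t)=1$.

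For the comparison I would rewrite both geodesic equations in terms of $w$. Since $\frac1p\Abs{w}^p=\frac1p\Abs{w}^{p-2}w^2$, the $L^p$-Fisher-Rao equation factors as $\Abs{w}^{p-2}\big(w_t+\frac1p w^2\big)=C(t)$. On the other hand, in the $\alpha$-geodesic equation of Theorem~\ref{eq:alpha_conn_geo_Prob} we have $p^\ast=\frac{p}{p-1}$; dividing by $\mu$ and using $\frac{\mu_{tt}}{\mu}=w_t+w^2$ gives $w_t+\frac1p w^2=\bar C(t)$, again with a purely time-dependent right-hand side. The two equations thus differ precisely by the factor $\Abs{w}^{p-2}$, which is identically $1$ iff $p=2$, recovering the Fisher--Rao/Levi-Civita case $\alpha=0$. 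For the ``only if'' direction when $p\neq2$ I would compare initial accelerations: with $w_0=\frac{a}{\mu_0}$, both equations give the same first term $\frac{p-1}{p}\mu_0 w_0^2$, while the correction enforcing $\int_M\mu_{tt}(0)=0$ is a multiple of $\mu_0\Abs{w_0}^{2-p}$ for $F_p$ but a multiple of $\mu_0$ for $\nablabar^{(\alpha)}$. These agree for all admissible velocities $a$ iff $\Abs{w_0}^{2-p}$ is spatially constant, which forces $2-p=0$; choosing any $a$ with $\Abs{a/\mu_0}$ non-constant exhibits distinct geodesics, completing the equivalence.

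The main obstacle I anticipate is not the algebra but the rigorous treatment of the constraint and of $C(t)$: justifying the Lagrange-multiplier step in the weak, non-strongly-convex Finsler setting (Lemma~\ref{lem:Hessian}), where these computations are only formal, and, for $p>2$, making sense of the integral $\int_M\Abs{w}^{2-p}\mu$ appearing in $C(t)$ at points where $w$ vanishes. I expect to address these at the same formal level as the rest of Section~\ref{sec:dens}, treating $C(t)$ as the orthogonal-projection multiplier keeping the curve in $\Prob$ rather than insisting on a closed-form expression.
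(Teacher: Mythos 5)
Your proposal is correct and follows essentially the same route as the paper: the paper's proof likewise reuses the first-variation computation \eqref{eq:deltaE} from Theorem~\ref{thm:geo_lp_dens}, observes that admissible variations must now integrate to zero, and concludes that the Euler--Lagrange expression $\Psi$ need only be orthogonal to mean-zero functions, hence a time-dependent constant fixed by $\int_M\mu(t)=1$. You go somewhat further than the paper by deriving the explicit multiplier $C(t)=-\tfrac{p-1}{p}\bigl(\int_M w^2\mu\bigr)/\bigl(\int_M\Abs{w}^{2-p}\mu\bigr)$ and by supplying an argument for the ``only if $p=2$'' comparison (the paper asserts this without proof); both additions are consistent with the formal level at which the paper works.
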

\begin{remark}[Existence of solutions]
In the previous section we showed that the geodesic equation of the $\alpha$-connections is locally wellposed on the space $\Prob$. One would be tempted to expect a similar result for the geodesic equation of the $L^p$-Fisher-Rao metric; recall that this statement was true on the space $\Dens$. 
It turns out that the above equation is analytically much worse-behaved: the problem arises from the vanishing of the quantity $\frac{\mu_t}{\mu}$ which leads to singularities of the geodesic equation. As a consequence we conjecture that the geodesic equation does not admit any classical solutions. This behavior can also be observed in the numerical simulations (Figure~\ref{fig:geodesics}), where the obtained (approximate) solutions show a singular behavior.        
\end{remark}
\begin{proof}[Proof of Theorem~\ref{thm:LpFR_geo_prob}]
To derive this equation, we proceed as for the geodesic equation on the space $\Dens$. We then obtain again 
\begin{equation*}
\begin{aligned}
\delta E_p(\mu)(\delta \mu)
&=-\frac{1}{p}\int_0 ^1 \int  \left( p\frac{d}{dt}\left(\Abs{\frac{\mu_{t}}{\mu}}^{p-2}\frac{\mu_t}{\mu}\right)+(p-1)\Abs{\frac{\mu_t}{\mu}}^p \right) \  \delta \mu\ \ud \dx\, dt:=-\frac{1}{p} \int_0 ^1 \int  \Psi \  \delta \mu\ \ud \dx\, dt
\end{aligned}
\end{equation*}
for the variation of the $p$-Energy with the only difference being that $\delta \mu$ now has to integrate to zero.
Thus we do not get that $\Psi=0$ as we had on the space $\Dens$, but   only that $\Psi$ has to be orthogonal to all such $\delta \mu$. This is equivalent to $\Psi$ being a constant for each fixed time $t$, which is determined by the condition that $\int_M \mu(t)=1$.
\end{proof}
The above result suggests that the equivalence between the $\alpha$-connection and the Chern-connection of the $L^p$-Fisher-Rao metric cannot hold in this setting. We will make this formal in the following theorem:
\begin{theorem}[The Chern connection on $\Prob$]\label{thm:chern_prob}
    For a vector field $\nu$ on $\Prob$ the Chern connection is given by, for all $a\in T_\mu\Prob$,
    \begin{equation}\label{chern-prob}
    \nablabar^\nu_a\nu=D \nu.a-\frac{p-1}{p}\frac{a}{\mu}\frac{\nu}{\mu}\mu + k_1\Abs{\frac{\nu}{\mu}}^{-p}\frac{a}{\mu}\nu + k_2\Abs{\frac{\nu}{\mu}}^{2-p}\mu,
    \end{equation}
    with the constants
    \begin{align*}
    &k_1(\nu):=-\frac{(p-1)(p-2)}{2p}\left(\int\Abs{\frac{\nu}{\mu}}^2\mu\right) / \left(\int\Abs{\frac{\nu}{\mu}}^{2-p}\mu\right)\\
    &k_2(\nu,a):=\frac{p-1}{p}\left(\int\frac{a}{\mu}\frac{\nu}{\mu}\mu\right)/\left(\int \Abs{\frac{\nu}{\mu}}^{2-p}\mu\right) + \frac{(p-1)(p-2)}{2p}\left(\int\Abs{\frac{\nu}{\mu}}^{-p}\frac{a}{\mu}\nu\int \Abs{\frac{\nu}{\mu}}^{2}\mu \right)/\left(\int \Abs{\frac{\nu}{\mu}}^{2-p}\mu\right)^2.
    \end{align*}
\end{theorem}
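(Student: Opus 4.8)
The plan is to verify that the formula \eqref{chern-prob} satisfies the generalized Koszul formula of Lemma~\ref{lem:koszul}, which characterizes the Chern connection uniquely, now with all vector fields tangent to $\Prob$. The crucial preliminary observation is that the Hessian metric $g^\nu$ of \eqref{riem-metric} and the Cartan tensor $C^\nu$ of \eqref{cartan-tensor} on $\Prob$ are simply the restrictions of the corresponding objects on $\Dens$; in particular $C^\nu(\nu,\cdot,\cdot)=0$ still holds, so the Koszul formula again reduces to its shortened form \eqref{koszul2}, but with $\nablabar^\nu$ in place of $\nabla^\nu$ and with test vectors $b\in T_\mu\Prob$. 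I would write the candidate as $\nablabar^\nu_a\nu = \nabla^\nu_a\nu + \Delta_a$, where $\nabla^\nu_a\nu = D\nu.a - \tfrac{p-1}{p}\tfrac{a}{\mu}\tfrac{\nu}{\mu}\mu$ is the $\Dens$ Chern connection of Theorem~\ref{thm:chern_dens} and $\Delta_a = k_1\Abs{\tfrac{\nu}{\mu}}^{-p}\tfrac{a}{\mu}\nu + k_2\Abs{\tfrac{\nu}{\mu}}^{2-p}\mu$ is the correction, and then impose two requirements: (a) $\nablabar^\nu_a\nu\in T_\mu\Prob$, i.e.\ it integrates to zero; and (b) the reduced Koszul formula \eqref{koszul2}.

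The main simplification comes from subtracting the $\Dens$ identity from the $\Prob$ one. Since $\nabla^\nu$ already satisfies \eqref{koszul2} on $\Dens$, hence for every $b\in T_\mu\Prob\subset T_\mu\Dens$, and since every term on the right-hand side of \eqref{koszul2} except the self-referential Cartan term is identical in the two settings (the metric, the brackets of $\Prob$-tangent fields, and the directional derivatives along $a,\nu,b\in T_\mu\Prob$ all agree with their $\Dens$ counterparts), requirement (b) collapses to the single identity
\begin{equation*}
g^\nu(\Delta_a,b) = -C^\nu(\Delta_\nu,b,a),\qquad \text{for all } b\in T_\mu\Prob.
\end{equation*}
This reduces the whole problem to two scalar computations rather than a full re-derivation of the Koszul formula.

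For requirement (a) I would use that $\nu$ is $T\Prob$-valued, so $\int D\nu.a = 0$; integrating the candidate then yields a linear relation that expresses $k_2$ through $k_1$ and produces exactly its stated first term $\tfrac{p-1}{p}\big(\int\tfrac{a}{\mu}\tfrac{\nu}{\mu}\mu\big)/R$ together with the coupling $-k_1\big(\int\Abs{\tfrac{\nu}{\mu}}^{-p}\tfrac{a}{\mu}\nu\big)/R$, where $R := \int\Abs{\tfrac{\nu}{\mu}}^{2-p}\mu$. For the identity in (b), I would exploit a chain of cancellations tied to the weight $\Abs{\tfrac{\nu}{\mu}}^{p-2}$ appearing in $I$: writing $Y := \Abs{\tfrac{\nu}{\mu}}^{2-p}\mu$ and $X_a := \Abs{\tfrac{\nu}{\mu}}^{-p}\tfrac{a}{\mu}\nu$, one checks $I(\nu,Y)=\int\nu=0$, $I(Y,b)=\int b=0$, $I(\nu,X_a)=\int a=0$, and most importantly $X_\nu = Y$. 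Feeding requirement (a) at $a=\nu$ into this last identity collapses the self-referential correction to the explicit multiple $\Delta_\nu = \tfrac{p-1}{p}\tfrac{S}{R}\,Y$, with $S := \int\Abs{\tfrac{\nu}{\mu}}^2\mu$. Substituting into \eqref{riem-metric} and \eqref{cartan-tensor}, both sides of the identity reduce to scalar multiples of the single integral $\int\left(\tfrac{\nu}{\mu}\right)^{-1}\tfrac{a}{\mu}\tfrac{b}{\mu}\mu$ (only the $J$-term of the Cartan tensor survives, the other four vanishing by the relations above), and matching the coefficients pins down $k_1 = -\tfrac{(p-1)(p-2)}{2p}\,S/R$, exactly the stated value.

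The main obstacle is precisely this self-referential Cartan term $C^\nu(\Delta_\nu,b,a)$: because $\Delta_\nu$ carries the unknowns $k_1$ and $k_2(\nu,\nu)$, the determining identity is a priori circular. The resolution is the algebraic coincidence $X_\nu=Y$, which—after inserting requirement (a)—turns $\Delta_\nu$ into a known multiple of $Y$ and thereby linearizes the identity in $k_1$. The remaining labor is bookkeeping: verifying that the numerous $I$- and $J$-integrals in \eqref{riem-metric}--\eqref{cartan-tensor} collapse, via the weight $\Abs{\tfrac{\nu}{\mu}}^{p-2}$ combining with the correction directions to produce either the constant function or the functional $b\mapsto\int b$. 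This step is routine once the vanishing relations $I(\nu,Y)=I(Y,b)=I(\nu,X_a)=0$ and the identity $X_\nu=Y$ are in hand.
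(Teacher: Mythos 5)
Your proposal is correct and follows essentially the same route as the paper: both decompose the candidate as the $\Dens$ Chern connection plus the correction $\Delta_a = k_1\Abs{\tfrac{\nu}{\mu}}^{-p}\tfrac{a}{\mu}\nu + k_2\Abs{\tfrac{\nu}{\mu}}^{2-p}\mu$, reduce the Koszul formula via Theorem~\ref{thm:chern_dens} to the single identity $g^\nu(\Delta_a,b)=-C^\nu(\Delta_\nu,b,a)$, and exploit the vanishing relations and the coincidence $X_\nu=Y$ so that only the $J$-term of the Cartan tensor survives, pinning down $k_1$. The only cosmetic difference is that you derive $k_2$ from the tangency requirement, whereas the paper verifies tangency of the stated formula directly.
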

\begin{remark}
As any vector field $\nu\in T_\mu\Prob$ has zeros the above formula has to be taken with caution and should be understood formally only. 
\end{remark}

\begin{remark}\label{rem:chern-prob}
In particular, when all entries are the same, the Chern connection on $\Prob$ is the orthogonal projection of the $\alpha$-connection $\nabla^{(\alpha)}$ on $\Dens$, for $\alpha=1-\frac{2}{p}$, with respect to $g^\nu$, the Riemannian metric \eqref{riem-metric} induced by the $L^p$-Fisher-Rao metric
    \begin{equation}\label{chern-prob-2}
    \nablabar^\nu_\nu\nu= \mathrm{Proj}^{\nu}\left(\nabla^{(\alpha)}_\nu\nu\right) = D\nu.\nu -\frac{1}{p^*}\frac{\nu}{\mu}\frac{\nu}{\mu}\mu + k\Abs{\frac{\nu}{\mu}}^{2-p}\mu
    \end{equation}
    where $p^*$ is the Hölder conjugate of $p$ and 
    $$k(\nu):=\frac{1}{p^*}\left(\int\Abs{\frac{\nu}{\mu}}^2\mu\right) / \left(\int\Abs{\frac{\nu}{\mu}}^{2-p}\mu\right).$$
    Indeed, the correction term $k\Abs{\frac{\nu}{\mu}}^{2-p}\mu$ is orthogonal to $T\Prob$ and makes the integral zero.
\end{remark}
\begin{proof}[Proof of Theorem~\ref{thm:chern_prob}]
We start by noticing that, since $D\nu(a)$ integrates to zero, the integral of the right hand-side of \eqref{chern-prob} is zero and so it defines a tangent vector of $\Prob$. The formula \eqref{chern-prob} defines the Chern connection if and only if it verifies the generalized Koszul formula \eqref{koszul2}.
Letting $\alpha=1-\frac{2}{p}$ and $\nabla^{(\alpha)}$ be the corresponding $\alpha$-connection on $\Dens$, we can decompose the candidate for the Chern connection as
$$\nablabar^\nu_a\nu=\nabla^{(\alpha)}_a\nu + k_1\Abs{\frac{\nu}{\mu}}^{-p}\frac{a}{\mu}\nu + k_2\Abs{\frac{\nu}{\mu}}^{2-p}\mu.$$
Since $\nabla^{(\alpha)}$ is the Chern connection on $\Dens$ for this choice of $\alpha$, the candidate \eqref{chern-prob} verifies the generalized Koszul formula if and only if
\begin{equation}\label{koszul3}
\begin{aligned}
0=&2g^\nu\left(k_1(\nu)\Abs{\frac{\nu}{\mu}}^{-p}\frac{a}{\mu}\nu+ k_2(\nu,a)\Abs{\frac{\nu}{\mu}}^{2-p}\mu,b\right)  + 2C^\nu\left(k_1(\nu)\Abs{\frac{\nu}{\mu}}^{-p}\frac{\nu}{\mu}\nu +k_2(\nu,\nu)\Abs{\frac{\nu}{\mu}}^{2-p}\mu , b, a\right)\\
=&2k_1(\nu)g^\nu\left(\Abs{\frac{\nu}{\mu}}^{-p}\frac{a}{\mu}\nu, b\right)  + 2k_2(\nu,a)g^\nu\left(\Abs{\frac{\nu}{\mu}}^{2-p}\mu,b\right) + 2(k_1(\nu)+k_2(\nu,\nu)) C^\nu\left(\Abs{\frac{\nu}{\mu}}^{2-p}\mu,b,a\right).
\end{aligned}
\end{equation}
Noticing that, for all $b\in T_\mu\Prob$, $I(|\frac{\nu}{\mu}|^{2-p}\mu, b)=\int b = 0$, we see from \eqref{riem-metric} that 
$$g^\nu\left(\Abs{\frac{\nu}{\mu}}^{2-p}\mu, b\right)=0.$$ 
This also means that all terms in the Cartan tensor \eqref{cartan-tensor} but one vanish, leaving
\begin{align*}
2C^\nu\left(\Abs{\frac{\nu}{\mu}}^{2-p}\mu,b,a\right)&=(p-1)(p-2)I(\nu,\nu)^{\frac{2}{p}-1}J\left(\nu,\Abs{\frac{\nu}{\mu}}^{2-p}\mu,b,a\right)\\
&=(p-1)(p-2)I(\nu,\nu)^{\frac{2}{p}-1}\int \left(\frac{\nu}{\mu}\right)^{-1}\frac{a}{\mu}\frac{b}{\mu}\mu
\end{align*}
Finally there remains to compute
\begin{align*}
g^\nu\left(\Abs{\frac{\mu}{\nu}}^p\frac{a}{\mu}\nu, b\right)&=(p-1)I(\nu,\nu)^{\frac{2}{p}-1}\int \Abs{\frac{\nu}{\mu}}^{p-2}\Abs{\frac{\nu}{\mu}}^{-p}\frac{a}{\mu}\frac{\nu}{\mu}\frac{b}{\mu}\mu\\
&-(p-2)I(\nu,\nu)^{\frac{2}{p}-2}I(\nu,b)\int \Abs{\frac{\nu}{\mu}}^{p-2}\frac{\nu}{\mu}\Abs{\frac{\nu}{\mu}}^{-p}\frac{a}{\mu}\frac{\nu}{\mu}\mu\\
&=(p-1)I(\nu,\nu)^{\frac{2}{p}-1}\int \left(\frac{\nu}{\mu}\right)^{-1}\frac{a}{\mu}\frac{b}{\mu}\mu.
\end{align*}
Putting all these together, and noticing that $k_2(\nu,\nu)=-\frac{p}{p-2}k_1(\nu)$, we obtain 
\begin{align*}
&2g^\nu\left(k_1(\nu)\Abs{\frac{\nu}{\mu}}^{-p}\frac{a}{\mu}\nu+ k_2(\nu,a)\Abs{\frac{\nu}{\mu}}^{2-p}\mu,b\right)  + 2C^\nu\left(k_1(\nu)\Abs{\frac{\nu}{\mu}}^{-p}\frac{\nu}{\mu}\nu +k_2(\nu,\nu)\Abs{\frac{\nu}{\mu}}^{2-p}\mu , b, a\right)\\
&=\left(pk_1(\nu)+(p-2)k_2(\nu)\right)(p-1)I(\nu,\nu)^{\frac{2}{p}-1}\int \left(\frac{\nu}{\mu}\right)^{-1}\frac{a}{\mu}\frac{b}{\mu}\mu\\
&=0,
\end{align*}
and so condition \eqref{koszul3} is satisfied. 
\end{proof}

\subsection{The $p$-root transform on $\Prob$}
In the previous section we have seen that the $\alpha$-connection and the $L^p$-Fisher-Rao metric induce different geodesics on the space $\Prob$. In this section we will investigate the geometric reasons behind this, by connecting both of these objects to the $p$-root transform.
In order to state this result we will need to define an appropriate connection on the sphere 
\[
S_p := \{ f\in C^{\infty}(M) ~:~ \|f\|_{L^p} = 1\},
\]
as the image of $\Prob$ under $\Phi_p$ is in this set.
To this end, we define:
\begin{definition}[$p$-projection and $p$-connection]
The $p$-projection map $\pi^p: TC^\infty|_{S_p} \to TS_p$ is defined by
\[
\pi^p_f(\xi) = \xi - \left(\int_M \xi f |f|^{p-2}\,\ud\dx\right)f, \qquad f\in S_p,\, \xi\in TC^\infty|_{S_p}.
\]
The induced $p$-connection on $S_p$ is defined by
\[
\nabla^p_\xi\eta = \pi^p \left( \ntr_\xi\eta\right).
\]
\end{definition}
Note, that $\pi^p$ is the projection with respect to the splitting $T_fC^\infty = T_f S_p \oplus \text{span}\{f\}$.
The geodesic equation $\nabla^p_{\dot\gamma}\dot\gamma = 0$ can therefore be written as:
\begin{equation}\label{eq:geod_nabla_p}
\begin{cases}
    \ddot\gamma \parallel \gamma \\
    \int_M \gamma^p \,\ud\dx = 1
\end{cases}
\end{equation}
Note that from a metric point of view, this splitting is natural since $f\in T_fC^\infty$ is the unique direction from which straight lines (i.e., geodesics in $C^\infty$) emanating from $f$ gets the fastest away from $S_p$ with respect to the $L^p$ norm (since for $p\in (1,\infty)$ the space $L^p$ is strictly convex).
Similarly, $\pi^p_f(\xi)$ satisfies $\|\xi - \pi^p_f(\xi)\|_{L^p} = \dist_{L^p} (\xi, T_fS_p)$.
For a more general viewpoint on projections on a sphere in uniformly convex Banach spaces whose dual is also uniformly convex, see \cite{gibilisco1999connections} and \cite[Prop.~2]{gibilisco2020p}.

We are now able to formulate the analogous statement of Theorem~\ref{thm:proot}, which will demonstrate the geometric differences between the $\alpha$-connections and the $L^p$-Fisher-Rao metric: 
\begin{theorem}\label{thm:proot_Prob}
Let $\alpha\in (-1,1)$ and, as before, denote $p = \frac{2}{1-\alpha}$.
Consider the restriction of the map $\Phi_p$, as defined in
\eqref{eq:ptransform}, to the space $\Prob$.
We have:
\begin{enumerate}[label=(\alph*)]
\item The image $\Phi_p(\Prob)$  is 
the set of all positive functions in the $L^p$-sphere $S_p$. 
\item \label{proot_FR_Prob} The mapping $\Phi_p$  
is an isometric embedding, where $\Prob$ is equipped with a multiple of the $L^p$-Fisher-Rao metric and where $S_p$ is equipped with the restriction of the standard $L^p$-norm.
\item \label{proot_alphaCon_Prob}
The pullback of $\Phi_p^*\nabla^p$ to $\Prob$ coincides with the connection $\overline{\nabla}^{(\alpha)}$ up to a constant depending only on the footpoint:
\[
(\Phi_p^*\nabla^p)_a b|_\mu = \frac{1}{p}\left(\frac{\mu}{\dx}\right)^{\frac{1}{p}-1} \overline{\nabla}^{(\alpha)}_a b|_\mu,
\qquad \mu\in \Prob, \, a,b\in \mathfrak{X}(\Prob).
\]
In particular, the geodesics of $\Phi_p^*\nabla^p$ and $\overline{\nabla}^{(\alpha)}$ coincide.
\end{enumerate}
\end{theorem}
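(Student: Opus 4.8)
The plan is to prove the three items in sequence, largely mirroring the proof of Theorem~\ref{thm:proot}, but carefully tracking the additional constraint terms coming from the sphere $S_p$. Item (a) is immediate: a density $\mu \in \Prob$ satisfies $\int_M \mu = 1$, which under $\Phi_p$ translates to $\int_M \left(\frac{\mu}{\dx}\right) \ud\dx = \int_M |\Phi_p(\mu)|^p \ud\dx = 1$, i.e. $\|\Phi_p(\mu)\|_{L^p} = 1$, so the image lands in $S_p$; conversely any positive $f \in S_p$ pulls back to a positive density integrating to one. Item (b) is essentially inherited from Theorem~\ref{thm:proot}\ref{proot_FR}: the differential $D_\mu\Phi_p$ and the resulting $L^p$-norm computation are pointwise-local and do not see the normalization constraint, so the same calculation showing $\|D_\mu\Phi_p(a)\|_{L^p} = \frac1p F_p(\mu,a)$ goes through verbatim, now with $a \in T_\mu\Prob$ and the ambient $L^p$-norm replaced by its restriction to $S_p$.

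The substance of the theorem is item (c). Here I would proceed as in the proof of Theorem~\ref{thm:proot}\ref{proot_alphaCon}, but with $\ntr$ replaced by $\nabla^p = \pi^p \circ \ntr$. The key observation is that pullback commutes with projection in the following sense: since $\Phi_p$ maps $\Prob$ into $S_p$ and is an isometric embedding, the differential $T\Phi_p$ carries $T_\mu\Prob$ onto $T_{\Phi_p(\mu)}S_p$, and the $p$-projection $\pi^p$ is precisely the projection along the ambient direction $f = \Phi_p(\mu)$ that is transverse to $S_p$. I would first compute $(\Phi_p^*\ntr)_a b|_\mu$ exactly as before, obtaining $\frac1p\left(\frac{\mu}{\dx}\right)^{\frac1p - 1}\nabla^{(\alpha)}_a b|_\mu$ as the ambient expression on $\Dens$, and then apply $\pi^p$ at the level of $S_p$. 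Concretely, the plan is to verify the intertwining identity
\[
(T\Phi_p)^{-1} \circ \pi^p_{\Phi_p(\mu)} \circ T\Phi_p = \tfrac1p\left(\tfrac{\mu}{\dx}\right)^{\frac1p-1}\operatorname{Proj}^{\GFR}_{T_\mu\Prob},
\]
where $\operatorname{Proj}^{\GFR}$ is the Fisher-Rao orthogonal projection onto $T_\mu\Prob$, and then combine it with the already-established fact that $\overline{\nabla}^{(\alpha)}_a b$ is the Fisher-Rao projection of $\nabla^{(\alpha)}_a b$ (the last lemma before Theorem~\ref{eq:alpha_conn_geo_Prob}).

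The main obstacle, and the step requiring genuine care, is verifying that the projection direction used by $\pi^p$ pulls back correctly to the Fisher-Rao normal direction. The definition of $\pi^p_f(\xi)$ subtracts $\left(\int_M \xi f |f|^{p-2}\,\ud\dx\right)f$; I would check that under $\xi = T\Phi_p(b)$ and $f = \Phi_p(\mu) = \left(\frac{\mu}{\dx}\right)^{1/p}$, the weight $f|f|^{p-2}\,\ud\dx$ becomes (up to the footpoint factor) the Radon-Nikodym density $\frac{\cdot}{\mu}$ paired against $\mu$, so that the integral $\int_M \xi f|f|^{p-2}\ud\dx$ reproduces exactly the Fisher-Rao inner product with $\mu$ that appears in the correction term $\left(\int_M \frac{a}{\mu}\frac{b}{\mu}\mu\right)\mu$ of formula~\eqref{alpha-prob}. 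Tracking the powers of $\left(\frac{\mu}{\dx}\right)$ through the differential $D_\mu\Phi_p(b) = \frac1p \frac{b}{\dx}\left(\frac{\mu}{\dx}\right)^{1/p-1}$ and confirming that the span$\{f\}$ direction corresponds precisely to the span$\{\mu\}$ normal direction is where the algebra must be done carefully; once this intertwining is established, the geodesic equivalence follows immediately, since the two connections agree up to a positive footpoint-dependent factor, which does not affect unparametrized geodesics.
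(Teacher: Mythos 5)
Your proposal is correct and takes essentially the same route as the paper, whose proof of this theorem is literally ``the same calculation as the proof of Theorem~\ref{thm:proot}'': you carry out that calculation and additionally make explicit the key point it relies on, namely that the weight $f|f|^{p-2}\,\ud\dx$ with $f=\left(\frac{\mu}{\dx}\right)^{1/p}$ turns the correction term of $\pi^p$ into the Fisher--Rao pairing against $\mu$, so that $\operatorname{span}\{f\}$ pulls back to the Fisher--Rao normal direction $\operatorname{span}\{\mu\}$ and $\overline{\nabla}^{(\alpha)}$ is recovered via formula~\eqref{alpha-prob}. One small correction: your displayed intertwining identity should read $(T\Phi_p)^{-1}\circ\pi^p_{\Phi_p(\mu)}\circ T\Phi_p=\operatorname{Proj}^{\GFR}_{T_\mu\Prob}$ without the scalar prefactor $\frac{1}{p}\left(\frac{\mu}{\dx}\right)^{\frac1p-1}$, since a projection conjugated by a linear isomorphism is idempotent and cannot carry a non-unit factor; the footpoint factor belongs only to the comparison of the two connections, exactly as in Theorem~\ref{thm:proot}.
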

\begin{proof}
The proof follows by the same calculation as the proof of Theorem~\ref{thm:proot}.
\end{proof}
On $S_p$, geodesics are no longer straight lines, and we do not have an explicit solution for the geodesic equations of either the $\alpha$-connection or the $L^p$-Fisher-Rao metric. 
However, by projecting straight lines on the sphere and rescaling time, one can obtain geodesics for the $\alpha$-connection (cf. \cite[Section~2.5.2]{ay2015information} where this result has been shown in the finite dimensional situation):

\begin{theorem}\label{prop:alpha-geod-prob}
    Let $f\in S_p$ and $\xi\in T_f S_p$.
    Let $I\subset \R$ be an interval containing $0$, and let $\tau:I\to \R$ satisfy the ODE
    \[
    \begin{split}
    \ddot\tau(t) &= 2\frac{\int_M |f + \tau(t) \xi|^{p-2}(f + \tau(t) \xi)\xi\,\ud\dx}{\int_M|f + \tau(t) \xi|^p\,\ud\dx}\dot\tau(t)^2 \\
    \tau(0) &= 0 \\
    \dot\tau(0) &= 1
    \end{split}
    \]
    Then $\gamma:I\to S_p$ defined by
    \[
    \gamma(t) = \frac{f + \tau(t)\xi}{\|f+\tau(t)\xi\|_{L^p}}
    \]
    is a geodesic of $\nabla^p$, with initial condition $\gamma(0) = f$, $\dot\gamma(0) = \xi$.
    
    A boundary value problem between $f,g\in S_p$ can be similarly addressed by putting $\xi = g-f$ and $I=[0,1]$, and replacing the initial conditions for $\tau$ by the boundary conditions $\tau(0)=0$, $\tau(1)=1$.

    Geodesics of $\overline{\nabla}^{(\alpha)}$ are obtained by pulling back these geodesics using $\Phi_p$.
    They all cease to exist (i.e., leave the space $\Prob$) after finite time.
    Since the geodesic equation is locally well-posed (Proposition~\ref{eq:alpha_conn_geo_Prob}), this procedure induces all the $\alpha$-connection geodesics, i.e., the exponential map of $\overline{\nabla}^{(\alpha)}$.
\end{theorem}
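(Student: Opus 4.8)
The plan is to verify directly that the given $\gamma$ satisfies the geodesic equation \eqref{eq:geod_nabla_p} of $\nabla^p$, namely that $\gamma$ stays on $S_p$ and that $\ddot\gamma$ is pointwise proportional to $\gamma$; the passage to $\overline{\nabla}^{(\alpha)}$ is then immediate from Theorem~\ref{thm:proot_Prob}\ref{proot_alphaCon_Prob}, and the finite-time statement follows from a sign analysis of the underlying straight line. Throughout I would write $c(s)=f+s\xi$ for the straight line in $C^\infty(M)$ and $N(s)=\|c(s)\|_{L^p}$, so that $\gamma(t)=\rho(t)\,c(\tau(t))$ with $\rho(t)=N(\tau(t))^{-1}$; by construction $\|\gamma\|_{L^p}=1$, so the second line of \eqref{eq:geod_nabla_p} holds automatically and only $\ddot\gamma\parallel\gamma$ remains.

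For the first claim I would differentiate $\gamma=\rho\,c(\tau)$ twice in $t$, using $c'(s)=\xi$ and $c''(s)=0$, to obtain
\[
\ddot\gamma=\ddot\rho\,c(\tau)+\bigl(2\dot\rho\dot\tau+\rho\ddot\tau\bigr)\xi.
\]
Since $c(\tau)$ is parallel to $\gamma$, and $\xi$ is linearly independent of $c(\tau)$ unless $\xi=0$ (because $\xi\in T_fS_p$ together with $\xi\parallel f$ forces $\xi=0$), the condition $\ddot\gamma\parallel\gamma$ is equivalent to the vanishing of the $\xi$-coefficient, i.e.\ to $2\dot\rho\dot\tau+\rho\ddot\tau=0$, which is the conservation law $\tfrac{d}{dt}(\rho^2\dot\tau)=0$. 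Substituting $\rho=N(\tau)^{-1}$ and $N'(s)=N(s)^{1-p}\int_M|c(s)|^{p-2}c(s)\,\xi\,\ud\dx$ turns this into $\ddot\tau=2\,(N'/N)(\tau)\,\dot\tau^2$, which is exactly the ODE in the statement. The initial conditions are then checked directly: $N(0)=1$ gives $\gamma(0)=f$, while $\dot\gamma(0)=\dot\rho(0)f+\rho(0)\dot\tau(0)\xi$ reduces to $\xi$ because $\xi\in T_fS_p$ means $\int_M|f|^{p-2}f\,\xi\,\ud\dx=0$, hence $N'(0)=0$ and $\dot\rho(0)=0$. The boundary value problem is handled by the identical computation, replacing the initial conditions on $\tau$ by $\tau(0)=0,\ \tau(1)=1$ and taking $\xi=g-f$, so that $\gamma(0)=f$ and $\gamma(1)=g/\|g\|_{L^p}=g$.

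The $\overline{\nabla}^{(\alpha)}$-geodesics are obtained by setting $\mu(t)=\Phi_p^{-1}(\gamma(t))=\gamma(t)^p\,\dx$: since Theorem~\ref{thm:proot_Prob}\ref{proot_alphaCon_Prob} shows that $\Phi_p^*\nabla^p$ and $\overline{\nabla}^{(\alpha)}$ share the same geodesics, $\mu$ is a $\overline{\nabla}^{(\alpha)}$-geodesic with the prescribed initial data. For the finite-time statement, the key point, and the part I expect to require the most care, is that the straight line $c(s)$ must leave the positive cone: as $f>0$ and $\int_M f^{p-1}\xi\,\ud\dx=0$ with $f^{p-1}>0$, the function $\xi$ (if nonzero) takes strictly negative values somewhere, so there is a smallest $s_+>0$ with $c(s_+)\ge 0$ vanishing at some point, and symmetrically an $s_-<0$. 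The conservation law gives $\dot\tau=\rho^{-2}=N(\tau)^2>0$, so $\tau$ is strictly increasing; since $N$ is continuous and strictly positive on $[0,s_+]$ (it vanishes only if $c(s)\equiv 0$, which is impossible), the elapsed time $t_+=\int_0^{s_+}N(s)^{-2}\,ds$ is finite, and at $t_+$ the curve $\gamma$ leaves the positive part of $S_p$, so $\mu$ leaves $\Prob$. The same argument toward $s_-$ handles $t<0$.

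Finally, exhaustiveness follows from local well-posedness. Since $\Phi_p$ is a diffeomorphism onto the positive part of $S_p$ with $D\Phi_p$ an isomorphism $T_\mu\Prob\to T_fS_p$, every $(\mu_0,a)\in T\Prob$ corresponds to some $(f,\xi)\in TS_p$, and the construction yields a $\overline{\nabla}^{(\alpha)}$-geodesic with initial data $(\mu_0,a)$. By the uniqueness part of Theorem~\ref{eq:alpha_conn_geo_Prob}, these are all the $\overline{\nabla}^{(\alpha)}$-geodesics, so the procedure computes the full exponential map.
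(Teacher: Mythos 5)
Your proposal is correct. The verification that $\gamma$ solves \eqref{eq:geod_nabla_p} — differentiating $\gamma=\rho\,c(\tau)$ twice, observing that only the $\xi$-component must vanish, and translating that condition into the stated ODE — is essentially the paper's computation, as is the transfer to $\overline{\nabla}^{(\alpha)}$ via Theorem~\ref{thm:proot_Prob} and the appeal to uniqueness for exhaustiveness. Where you genuinely diverge is the finite-time blow-up. The paper first reduces to $f\equiv 1$ by $\Diff$-equivariance, rewrites the ODE as \eqref{eq:tau}, and uses strict convexity of $L^p$ (so that $\|1+s\xi\|_{L^p}\ge 1$ with equality only at $s=0$) to conclude $\ddot\tau>0$, hence $\tau(t)>t$, hence $\tau$ is unbounded and the line $1+\tau\xi$ leaves the positive cone. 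You instead integrate the geodesic condition once to get the first integral $\rho^2\dot\tau\equiv 1$, i.e.\ $\dot\tau=N(\tau)^2>0$, and compute the hitting time directly as $t_+=\int_0^{s_+}N(s)^{-2}\,ds<\infty$, using only continuity and positivity of $N$ on the compact interval $[0,s_+]$. This is arguably cleaner: it needs no reduction to $f\equiv1$ and no convexity input, it handles negative times symmetrically, and it gives the blow-up time as an explicit integral rather than only showing $\tau$ is unbounded. What the paper's route buys in exchange is the quantitative estimate $\tau(t)>t$, which is what produces the explicit bound \eqref{eq:blowup} in the subsequent remark; your formula recovers that bound too once one notes $N\ge1$ (the tangent hyperplane supports the unit ball), but you would need to add that observation. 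The only cosmetic caveat, shared with the paper, is that the "all geodesics leave $\Prob$" claim implicitly excludes the constant geodesic $\xi=0$, which you do flag.
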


\begin{proof}
    Using \eqref{eq:geod_nabla_p}, we need to show that $\ddot\gamma \parallel\gamma$; all the other assumptions are satisfied by construction.
    We have
    \[
    \ddot\gamma = \ddot\tau(t)\|f+\tau(t)\xi\|_{L^p}^{-1}\xi + 2\dot\tau(t)\frac{d}{dt}\|f+\tau(t)\xi\|_{L^p}^{-1}\xi + \frac{d^2}{dt^2}\|f+\tau(t)\xi\|_{L^p}^{-1}(f + \tau(t)\xi).
    \]
    The last addend is clearly parallel to $\gamma$.
    Hence it is sufficient to require that
    \[
   \ddot\tau(t)\|f+\tau(t)\xi\|_{L^p}^{-1} + 2\dot\tau(t)\frac{d}{dt}\|f+\tau(t)\xi\|_{L^p}^{-1} = 0,
    \]
    which is equivalent to the wanted ODE.

    In order to prove that the pullback of the solutions leaves $\Prob$ after a finite time, we need to show that $\gamma(t)$ stops being positive, i.e., that for some $t>0$, $f(x) + \tau(t)\xi(x) \le 0$ for some $x\in M$.
    From the equivariance under the action of $\Diff$, cf. Remark~\ref{rem:uniqueness}, it is sufficient to consider the case $f\equiv 1$ (which corresponds to $\mu(0)=\dx$).
    In this case $\xi$ is a non-zero function satisfying $\int_M \xi \,\dx= 0$, and thus in particular $\xi(x) <0$ for some $x$.
    Therefore, in order to prove that $1 + \tau(t)\xi(x) \le 0$ for some $t$, it is sufficient to prove that $\tau$ is unbounded as $t\to \infty$.
    Note that we can write the equation for $\tau$ as
    \begin{equation}\label{eq:tau}
    \ddot\tau(t) = 2\left(1-\frac{1}{\int_M|1 + \tau(t) \xi|^p\,\ud\dx}\right)\frac{\dot\tau(t)^2}{\tau}.
    \end{equation}
    Now, since $s\mapsto 1 + s\xi$ is a tangent line to the unit sphere at $f=1$ in the strictly convex space $L^p$, it follows that $\|1+s\xi\|_{L^p} \ge 1$, and equality holds if and only if $s=0$.
    Thus, the term in the parentheses in \eqref{eq:tau} is non-negative, and vanishes if and only if $\tau(t) = 0$.
    Since we also have that $\tau(0)=0$ and $\dot\tau(0)=1$, it follows that $\ddot\tau(t) > 0$ for $t\in (0,t_0)$ for some $t_0$ small enough, and thus for any positive $t$.
    It follows therefore that $\tau > t$ for all $t>0$, and in particular, it is unbounded.
\end{proof}

\begin{remark}
In fact, the estimate $\tau>t$ implies that $1+\tau(t) \xi$ hits zero at some point for the first time at $t_* < \frac{1}{-\min \xi}$.
Pulling back to $\Prob$, we obtain that a geodesic from $\dx$ with initial condition $a\in T_\dx\Prob$ blows up at time 
\begin{equation}\label{eq:blowup}
t_* < \frac{p}{-\min(a/\dx)}.
\end{equation}
In principle, better estimates on the blowup can be obtained by more careful analysis of \eqref{eq:tau}.
The estimate \eqref{eq:blowup} is exactly the estimate obtained in \cite[Formula (78)]{kogelbauer2020global} (there, the parameter $a$ is equivalent to $-1-\frac{2}{p}$ in our notation).
\end{remark}

\begin{example}[Fisher-Rao geodesics]
For the case $p=2$, assuming that $\xi$ is a unit vector (which is, by definition, perpendicular to $f$), we obtain that the ODE takes a simpler form
\[
\ddot\tau = \frac{2\tau}{1+\tau^2}\dot\tau^2,
\]
whose solution is $\tau(t) = \tan t$, yielding the known solution of the Fisher--Rao geodesics \cite[Remark~4.4]{khesin2013geometry}. 
\end{example}

\section{Summary of relations to known PDEs and metrics on diffeomorphism groups}\label{sec:PDEs}
We now summarize how the $L^p$-Fisher-Rao metric relates to (degenerate) right-invariant Finsler metric on the group of diffeomorphisms, in a similar spirit as in~\cite{khesin2013geometry} who studied this for the $L^2$-case.
Furthermore, we will see how the geodesics equations described in this paper relate to other previously studied equations in hydrodynamics and mathematical physics:
\begin{itemize}
\item On the diffeomorphism group of a closed manifold $M$ one can consider the family of, right-invariant (degenerate) $\dot W^{1,p}$-Finsler metrics of the form  
\begin{equation*}
\tilde F_p(\varphi,X\circ\varphi)=\left(\int_M |\operatorname{div}(X)|^p \ud\dx\right)^{1/p},\qquad X\in T_{\varphi}\Diff.
\end{equation*}
These metrics were useful for proving that the diameter of $\Diff$ with respect to some critical Sobolev Riemannian metrics is infinite \cite{bauer2021can}.
Note that the kernel of the Finsler metric $\tilde F_p$ consists exactly of all divergence free vector fields, and thus $\tilde F_p$ is only a  ``true'' Finsler metric on the quotient space $\Diff/\SDiff$. 
The relation to the $L^p$-Fisher-Rao metric, as studied in the present article, becomes clear by considering the mapping $\varphi\mapsto \operatorname{Jac}(\varphi)\lambda$, which gives rise to an isometry 
\begin{equation*}
 (\Diff/\SDiff,\tilde F_p)\to(\Prob,F_p).  
\end{equation*} Note, that this result is a direct generalization of the case $p=2$ treated in Khesin et al.~\cite{khesin2013geometry}. 
For this case Modin~\cite{modin2015generalized} constructed an extension of the metric $\tilde F_2$ to obtain a non-degenerate, right invariant Riemannian metric on the full group of diffeomorphisms $\Diff$, that still descends to the Fisher-Rao metric $F_2$ on $\Prob$. In future work it would be interesting to consider a similar extension for the case $p\neq 2$.
\item Similarly, the $\alpha$-connections on $\operatorname{Prob}(M)$ can be pulled back to $\Diff/\SDiff$; the corresponding geodesic equation (which is equivalent to the one in Theorem~\ref{eq:alpha_conn_geo_Prob}) was first considered in \cite{lenells2014amari}.
Theorem~\ref{prop:alpha-geod-prob} shows their integrability and finite-time blowup.
\item For the special case $M=S^1$, where the group of volume preserving diffeomorphisms is given by the group of rotations $\operatorname{Rot}(S^1)$, the $\alpha$-connections on $\operatorname{Prob}(S^1)$ can thus be pulled back to $\operatorname{Diff}(S^1)/\operatorname{Rot}(S^1)$, where the associated geodesic equation, when presented on the Lie algebra, is the generalized periodic inviscid Proudman--Johnson equation 
\[
u_{txx}+(2-\alpha)u_xu_{xx} + uu_{xxx} = 0,
\]
as was first shown in \cite{lenells2014amari}.
See \cite{sarria2013blow,kogelbauer2020global} and the references therein for analysis of this equation, also beyond the range $\alpha \in (-1,1)$.
\item Similarly, the $L^p$-Fisher--Rao metric on $\operatorname{Prob}(S^1)$ can be considered as a Finsler metric on $\operatorname{Diff}(S^1)/\operatorname{Rot}(S^1)$.
The resulting geodesic equation is the periodic $r$-Hunter--Saxton equation for $r=1/p$, as considered in \cite{cotter2020r,bauer2022geometric}.
As shown in this paper, this is not the same equation as the one of the $\alpha$-connections on $\operatorname{Prob}(S^1)$ (i.e., the generalized periodic invicid Proudman--Johnson equation), unlike what we erroneously stated in \cite{bauer2022geometric}. 
\item For $M = \R$, the geodesic equations of $\alpha$-connections (equiv., of the $L^p$-Fisher-Rao metric) on $\operatorname{Dens}(\R)$ can be considered as equations of an appropriate subgroup of $\operatorname{Diff}(\R)$, defined in \cite{bauer2022geometric}.
The resulting equation is the generalized non-periodic invicid Proudman--Johnson equation, or equivalently, the non-periodic $r$-Hunter--Saxton equation (for $r = 1/p$) \cite{cotter2020r}.
Moreover, the metric $\tilde F_p$ described above on this subgroup of $\operatorname{Diff}(\R)$ yields a similar isometry to $(\Dens(\R),F_p)$, as follows from \cite{bauer2022geometric}.
It is interesting whether $(\Dens(\R),F_p)$ can be similarly interpreted on compact manifolds as well, maybe in a similar way to the "simple unbalanced optimal transport" extension, introduced recently in \cite{khesin2023simple}.
\end{itemize}

\section{A numerical comparison of geodesics on $\Dens$ and $\Prob$}\label{sec:numerics}
In this section we aim to numerically compare the different notions of geodesics that we have encountered in this article. Given two probability densities 
we consider three notions of geodesics:
\begin{enumerate}
    \item The geodesic for the $L^p$-Fisher-Rao metric and the $\alpha$-connection on $\Dens$, which is simply obtained as the pullback by the $p$-root transform $\Phi_p$ of the straight line in $L^p$. This geodesic leaves the space $\Prob$.
    \item The geodesic for the $\alpha$-connection on $\Prob$, which is the pullback by the $p$-root transform of the projection of the straight line on the $L^p$ sphere, as described in Theorem \ref{prop:alpha-geod-prob}.
    \item The geodesic for the $L^p$-Fisher-Rao geodesic on $\Prob$, which is the pullback by the $p$-root transform of the geodesic of the $L^p$-metric restricted to the $L^p$-sphere.
\end{enumerate}
Specifically we consider the example of probability densities on the one-dimensional base space $M=[0,1]$. Note, that we have an explicit formula for the first two notions of geodesics (geodesics on $\Dens$ and $\alpha$-connection geodesics on $\Prob$), but that the calculation of the $L^p$-Fisher-Rao geodesic between two probability distributions $\mu_0$ and $\mu_1$ requires us to solve an optimization problem: the geodesic boundary value problem on the $L^p$-sphere. Namely, we minimize the $p$-energy for the $L^p$ metric on smooth functions
\begin{equation}\label{p-energy}
E_p(f)=\frac1p\int_0^1\int |f_t|^p\,\ud\dx\, dt,   
\end{equation}
where $f: [0,1]\to C^\infty(M)$ is a path constrained to belong to the $L^p$-sphere, such that $f(0)=\Phi_p(\mu_0$), $f(1)=\Phi_p(\mu_1)$ and $f_t$ denotes its time derivative. This is equivalent to minimizing the length functional, as explained in the proof of Theorem \ref{thm:geo_lp_dens}. We then obtain the wanted geodesic $\mu:[0,1]\rightarrow \Prob$ by applying $\Phi_p^{-1}$.
 
In Figure~\ref{fig:geodesics} we show the three types of geodesics obtained for different values of $p$ ($p=2,3,5, 10$ from top to bottom), and the corresponding values of $\alpha=1-2/p$. The constrained minimization of \eqref{p-energy} was performed in Python using the Sequential Least Squares Programming (SLSQP) method provided by the Scipy minimization solver, with a discretization of $30$ time points and $100$ sampling points, in a straightforward implementation that was not aimed for computational efficiency. As expected, the $L^p$-Fisher-Rao metric and the $\alpha$-connection yield different geodesics on $\Prob$, except for the special case $p=2$ corresponding to the Fisher-Rao metric and its Levi-Civita connection.

\begin{figure}
    \centering
    \includegraphics[width=0.28\linewidth]{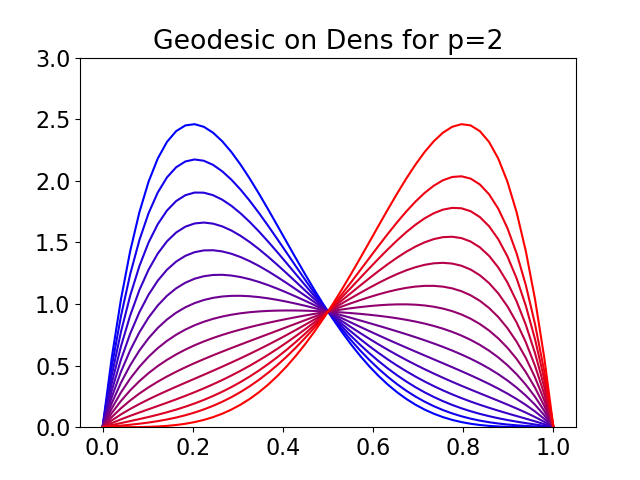}
    \includegraphics[width=0.28\linewidth]{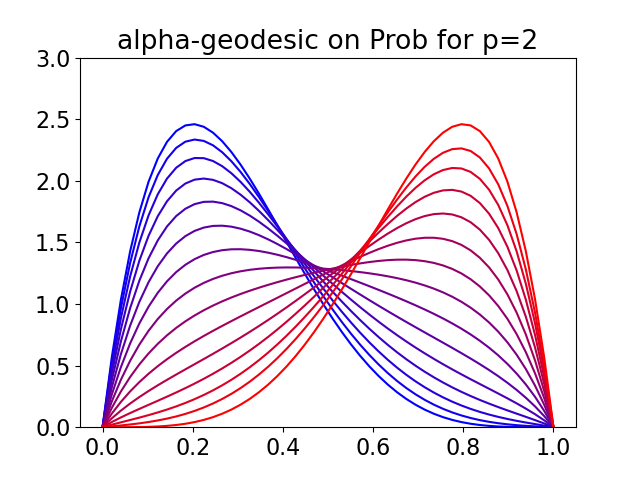}
    \includegraphics[width=0.28\linewidth]{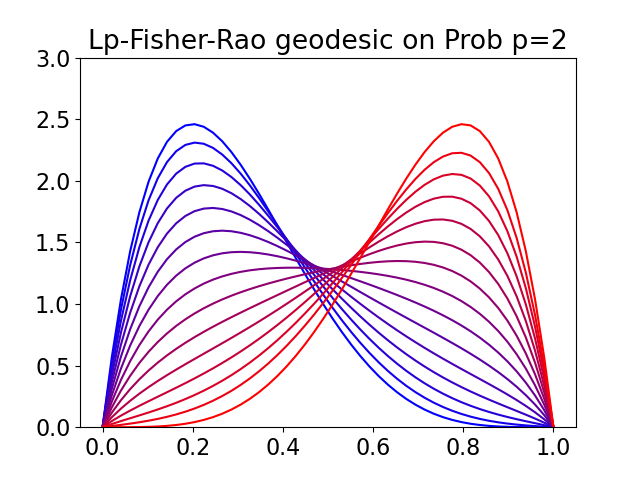}\\
    \includegraphics[width=0.28\linewidth]{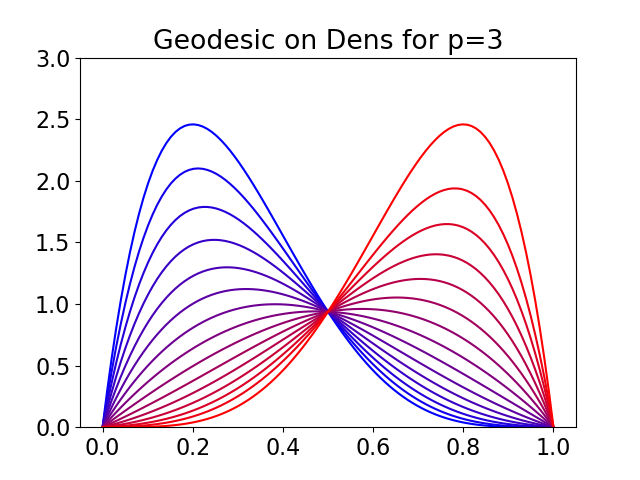}
    \includegraphics[width=0.28\linewidth]{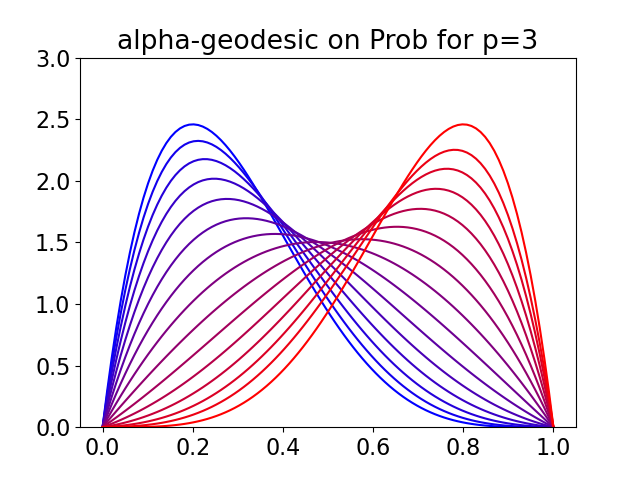}
    \includegraphics[width=0.28\linewidth]{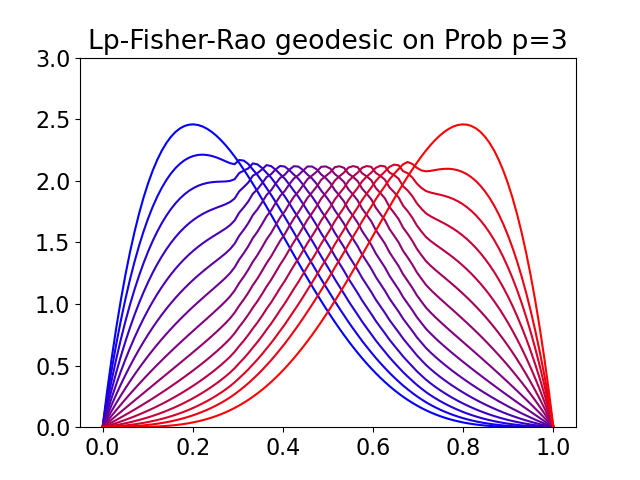}\\
    \includegraphics[width=0.28\linewidth]{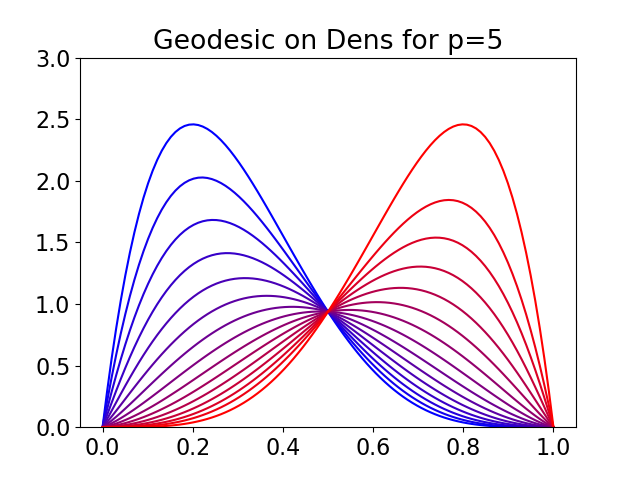}
    \includegraphics[width=0.28\linewidth]{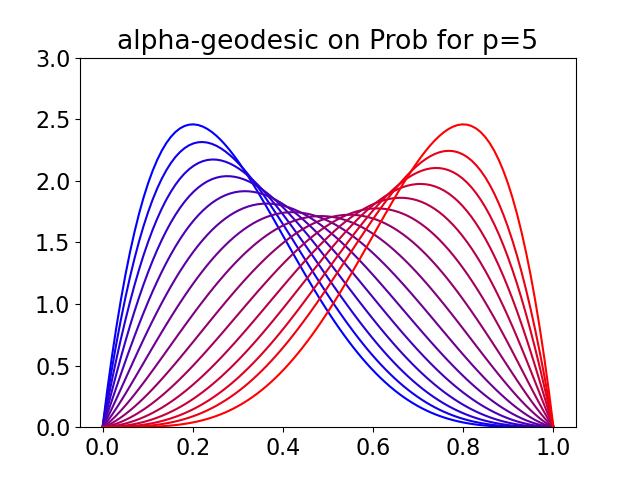}
    \includegraphics[width=0.28\linewidth]{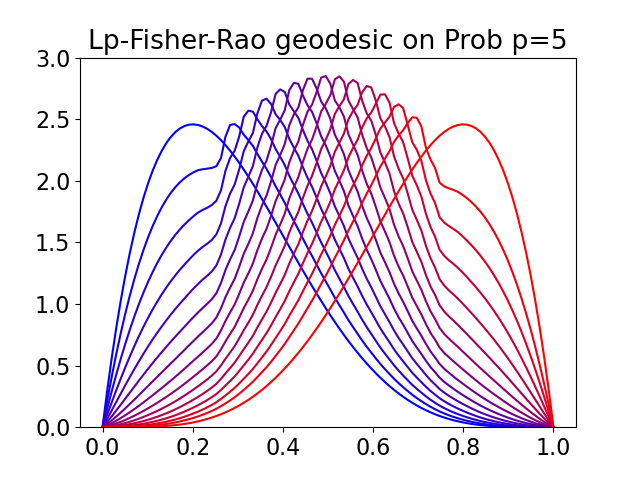}\\
    \includegraphics[width=0.28\linewidth]{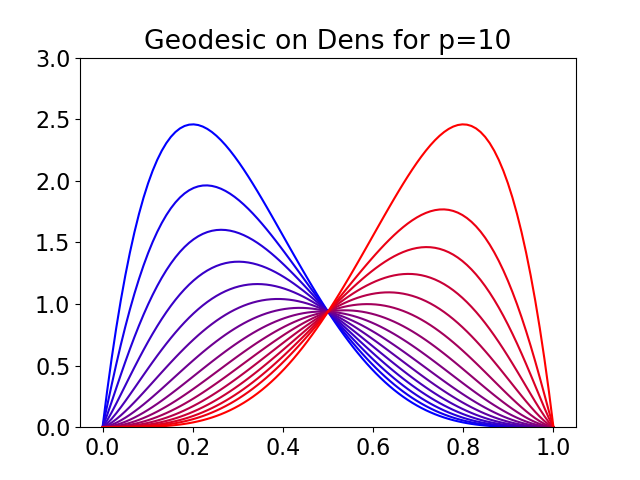}
    \includegraphics[width=0.28\linewidth]{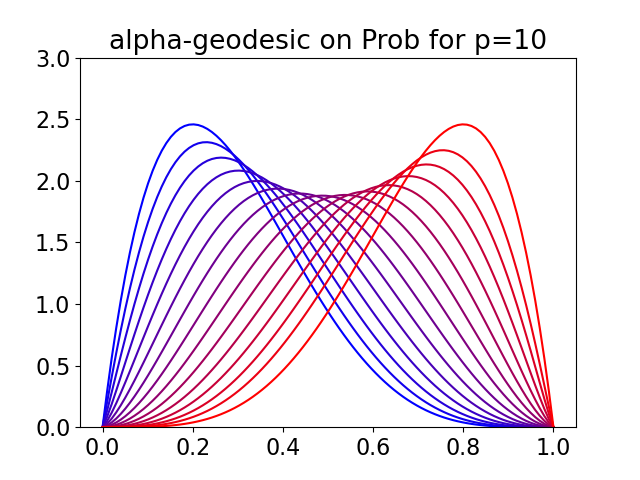}
    \includegraphics[width=0.28\linewidth]{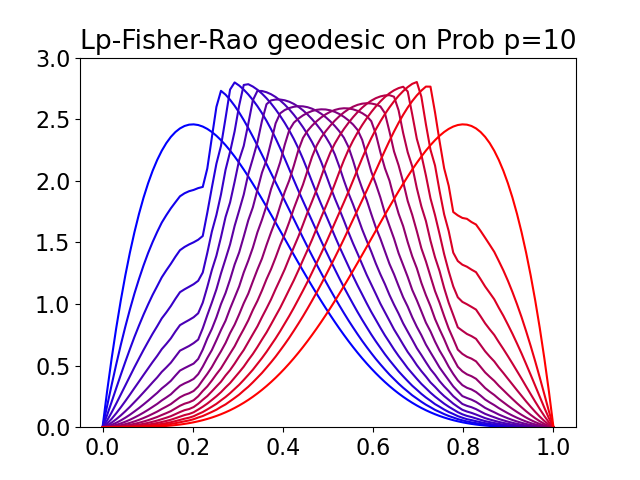}
    \caption{Different notions of geodesics between two probability distributions on $[0,1]$, for $p=2, 3, 5, 10$ from top to bottom, and corresponding values of $\alpha=1-2/p$. On the left: geodesics of $\Dens$ for the $L^p$-Fisher-Rao metric and the corresponding $\alpha$-connection. In the middle: $\alpha$-geodesics on $\Prob$. On the right: $L^p$-Fisher-Rao geodesics on $\Prob$. The last two notions coincide only for $p=2$.}
    \label{fig:geodesics}
\end{figure}

\section{Finite-dimensional geometry of parametric statistical models}\label{sec:finite-dim}

In this section we make the link with the finite-dimensional setting of parametric statistical models. Let us consider a finite-dimensional submanifold of $\ProbRn$ corresponding to a family of probability distributions on $\R^n$ that are absolutely continuous with respect to the Lebesgue measure, and whose densities are parametrized by a parameter $\theta$ belonging to an open subset $\Theta$ of $\R^d$:
\begin{align*}
    \mathcal P_\Theta = \{\mu(dx)=f(x,\theta)\,dx: \theta\in\Theta\}\subset \ProbRn.
\end{align*}
Here $x\in\R^n$ is the sample variable and $dx$ denotes the Lebesgue measure on $\R^n$. Then a tangent vector of $\mathcal P_\Theta$ at a given $\mu=f(\cdot,\theta) dx$ is of the form $a=\left.\frac{d}{dt}\right|_{t=0}\mu_t$, where $\mu_t=f(\cdot,\theta_t)dx$ with $t\mapsto\theta_t$ a curve in $\Theta$ such that $\theta_0=\theta$ and $\dot\theta_0=
u\in T_\theta\Theta$. 
Thus the tangent space at $\mu$ is
\begin{align}\label{basis-vectors}
T_\mu\mathcal P_\Theta&=\{a= \langle\grad_\theta f, u\rangle dx : u\in T_\theta\Theta \simeq \R^d\}\nonumber\\
&=\mathrm{span}\{e_1,\hdots,e_d\},
\end{align}
where $e_i=\frac{\partial f}{\partial\theta^i}dx$. 
Here $\grad_\theta$ denotes the gradient with respect to $\theta$ and $\langle\cdot,\cdot\rangle$ the Euclidean scalar product on $\R^d$. In all the sequel, we identify $\mathcal P_\Theta \simeq \Theta$ and $T_\mu\mathcal P_\Theta \simeq T_\theta \Theta \simeq \R^d$ via the one-to-one maps
\begin{equation}\label{bij}
\begin{aligned}
&\phi: \Theta \rightarrow \mathcal P_\Theta, \quad \theta \mapsto f(\cdot,\theta)dx,\\
&(\phi_*)_\theta: T_\theta\Theta\rightarrow T_{\phi(\theta)}\mathcal P_\Theta, \quad u \mapsto \langle \grad_\theta f,u\rangle dx.
\end{aligned}
\end{equation}

\subsection{The Fisher-Rao metric and the $\alpha$-connection}

The Fisher-Rao metric on the parameter space $\Theta$ is the Riemannian metric whose metric matrix is the Fisher information matrix
$$G(\theta)=\E\left[\grad_\theta\ell(X,\theta)\grad_\theta\ell(X,\theta)^\top\right].$$
Here $\E$ denotes the expectation taken with respect to the random variable $X$ of density $f(\cdot,\theta)$, and $\ell(x,\theta)=\log f(x,\theta)$ is the log-likelihood. 
\begin{definition}
    Given $\theta\in\Theta$ and $u,v\in T_\theta\Theta\simeq\R^d$, the Fisher-Rao metric is
    $$\GFR_\theta(u,v)=u^\top G(\theta)v=\E\left[\langle\grad_\theta\ell,u\rangle\langle\grad_\theta\ell,v\rangle\right],$$
    where $\langle\cdot,\cdot\rangle$ denotes the Euclidean scalar product on $\R^d$.
\end{definition}
The Fisher-Rao metric on the parameter space $\Theta$ is the pullback of the Fisher-Rao metric on the infinite-dimensional space $\ProbRn$ by the bijection $\phi$ defined by \eqref{bij}, i.e. for any $\theta\in\Theta$ and $u,v\in T_\theta\Theta$,
$$\GFR_{\phi(\theta)}(\phi_*u,\phi_*v)=\GFR_\theta(u,v),$$
and so both are denoted the same way.

Just like in the infinite-dimensional setting, the $\alpha$-connection on the parameter space can be defined using the $\alpha$-divergence.
\begin{definition}
The $\alpha$-connection on $\Theta$ is defined by its Christoffel symbols of the first kind (\cite{zhang2004divergence}, Eqn 2.9)
$$\left.\tilde\Gamma^{(\alpha)}_{ij,k}\right|_\theta=\GFR_\theta(\nabla^{(\alpha)}_{\partial_i}{\partial_j},\partial_k)=\left.-\frac{\partial^3}{\partial u^i \partial u^j \partial v^k}D^{(\alpha)}(\theta+u , \theta + v)\right|_{u=v=0}$$
where
$$D^{(\alpha)}(\theta, \theta')=\frac{4}{1-\alpha^2}\left(1 - \int f(x,\theta)^{\frac{1-\alpha}{2}}f(x,\theta')^{\frac{1+\alpha}{2}} dx\right)$$
is the $\alpha$-divergence. This yields the following formula in local coordinates, where $\partial_i$ denotes $\partial/\partial\theta^i$,
\begin{equation}\label{alpha-finite}
\tilde\Gamma_{ij,k}^{(\alpha)}=\E\left[(\partial_i\partial_j\ell + \frac{1-\alpha}{2} \partial_i\ell \partial_j\ell )\partial_k\ell\right].
\end{equation}
\end{definition}
The following result is well-known in the literature, and stated e.g. in \cite{amari2000methods} for spaces of probability distributions on a finite set.
\begin{theorem}[$\alpha$-connection on $\Theta$]\label{thm:finite-alpha}
For any $u,v\in T_\theta\Theta$, we have
$$\nablatilde^{(\alpha)}_uv = \mathrm{Proj}^{\mathrm{FR}}\left(\nablabar^{(\alpha)}_{\phi_*u}\phi_*v\right),$$
where $\nablatilde^{(\alpha)}$ and $\nablabar^{(\alpha)}$ denote the $\alpha$-connections on $\mathcal P_\Theta \simeq\Theta$ and $\ProbRn$ respectively, and $\mathrm{Proj}^{\mathrm{FR}}:T_{\phi(\theta)}\ProbRn\rightarrow T_\theta\Theta$ is the orthogonal projection with respect to the Fisher-Rao metric.
\end{theorem}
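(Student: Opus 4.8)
The plan is to verify the identity by testing both sides against the coordinate basis fields $\partial_k$ in the Fisher--Rao metric and then invoking non-degeneracy. Since $\GFR$ restricts to a genuine (positive-definite) inner product on the finite-dimensional tangent space $T_\theta\Theta$, two tangent vectors of $\mathcal P_\Theta$ agree as soon as their $\GFR$-pairings with every $\partial_k$ agree. For the left-hand side this pairing is, by definition, the Christoffel symbol of the first kind, $\GFR_\theta(\nablatilde^{(\alpha)}_{\partial_i}\partial_j,\partial_k)=\tilde\Gamma^{(\alpha)}_{ij,k}$, given explicitly by \eqref{alpha-finite}. For the right-hand side, because $\mathrm{Proj}^{\mathrm{FR}}$ is the orthogonal projection onto $T_\theta\Theta$ and $\phi_*\partial_k\in T_\theta\Theta$, the projection is invisible to the pairing: $\GFR(\mathrm{Proj}^{\mathrm{FR}}(\nablabar^{(\alpha)}_{\phi_*\partial_i}\phi_*\partial_j),\phi_*\partial_k)=\GFR(\nablabar^{(\alpha)}_{\phi_*\partial_i}\phi_*\partial_j,\phi_*\partial_k)$. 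Thus the theorem reduces to the single scalar identity $\tilde\Gamma^{(\alpha)}_{ij,k}=\GFR_\mu(\nablabar^{(\alpha)}_{e_i}e_j,e_k)$, where $e_i=\phi_*\partial_i=\partial_if\,dx$.

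The second step is to translate the ambient formula \eqref{alpha-prob} into the log-likelihood quantities of \eqref{alpha-finite}. Working through the Radon--Nikodym derivative with respect to $dx$, I record the dictionary $\frac{e_i}{\mu}=\frac{\partial_if}{f}=\partial_i\ell$ and, since the directional derivative of the submanifold-defined field $e_j$ along the tangent direction $e_i$ is $De_j.e_i=\partial_i\partial_jf\,dx$, the identity $\frac{De_j.e_i}{\mu}=\frac{\partial_i\partial_jf}{f}=\partial_i\partial_j\ell+\partial_i\ell\,\partial_j\ell$. Note that $De_j.e_i$ is unambiguous here: it only records the variation of $e_j$ along a curve that stays inside $\mathcal P_\Theta$, so no choice of ambient extension intervenes. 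Substituting into \eqref{alpha-prob} with $\tfrac{1}{p^*}=\tfrac{1+\alpha}{2}$ and using $\int\frac{e_i}{\mu}\frac{e_j}{\mu}\mu=\E[\partial_i\ell\,\partial_j\ell]$ together with $1-\tfrac{1+\alpha}{2}=\tfrac{1-\alpha}{2}$ gives
\[
\frac{\nablabar^{(\alpha)}_{e_i}e_j}{\mu}=\partial_i\partial_j\ell+\frac{1-\alpha}{2}\,\partial_i\ell\,\partial_j\ell+\frac{1+\alpha}{2}\,\E\!\left[\partial_i\ell\,\partial_j\ell\right].
\]

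Finally I pair with $e_k$: multiplying by $\frac{e_k}{\mu}=\partial_k\ell$ and integrating against $\mu$ yields $\GFR_\mu(\nablabar^{(\alpha)}_{e_i}e_j,e_k)=\E[(\partial_i\partial_j\ell+\tfrac{1-\alpha}{2}\partial_i\ell\,\partial_j\ell)\partial_k\ell]+\tfrac{1+\alpha}{2}\E[\partial_i\ell\,\partial_j\ell]\,\E[\partial_k\ell]$. The constant correction term drops out because of the elementary but essential fact that the score has mean zero: differentiating $\int f\,dx=1$ gives $\E[\partial_k\ell]=\int\partial_kf\,dx=0$. What remains is exactly $\tilde\Gamma^{(\alpha)}_{ij,k}$ from \eqref{alpha-finite}, completing the reduction and hence the proof. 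I expect no genuine obstacle in this argument; it is a direct computation, and the only points demanding care are the correct conversion between the $\mu$-normalized ambient formula and the log-likelihood form, and the observation that the normal ($\mu$-)correction in $\nablabar^{(\alpha)}$ is annihilated by the score identity—this is precisely the mechanism by which the ambient connection, once projected, collapses onto the classical parametric $\alpha$-connection.
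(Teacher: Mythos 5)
Your proposal is correct and follows essentially the same route as the paper's proof: both translate the ambient formula \eqref{alpha-prob} into log-likelihood quantities via $\frac{e_i}{\mu}=\partial_i\ell$ and $\frac{De_j.e_i}{\mu}=\partial_i\partial_j\ell+\partial_i\ell\,\partial_j\ell$, and both kill the normal correction with the score identity $\E[\partial_k\ell]=0$; your pairing-against-$\partial_k$ plus non-degeneracy argument is just the paper's explicit projection formula $\mathrm{Proj}^{\mathrm{FR}}(a)=G^{ij}\GFR(a,e_j)\partial_i$ in disguise. The only difference is that you check the identity on the coordinate frame alone while the paper carries general fields $u^i\partial_i$, $v^j\partial_j$ (tracking the $u^i\partial_iv^j$ terms); this is harmless since both sides are affine connections, so their difference is tensorial and agreement on a frame suffices, though it would be worth one sentence to say so.
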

\begin{proof}
First notice that at any $\mu=\phi(\theta)\in\mathcal P_\Theta$, the orthogonal projection of a tangent vector $a\in T_\mu\ProbRn$ onto $T_\theta\Theta$ with respect to the Fisher-Rao metric $\GFR$ is given by 
\begin{equation}\label{proj-fr}
\mathrm{Proj}^{\mathrm{FR}}(a)=G^{ij}\GFR(a,e_j)\partial_i,
\end{equation}
where $(G^{ij})_{ij}$ is the inverse of the Fisher matrix. Indeed, the tangent space $T_\mu\mathcal P_\Theta$ is a $d$-dimensional vector space spanned by the tangent vectors $e_i=\partial_i f dx$ for $i=1,\hdots,d$, and so the orthogonal projection of $a\in T_\mu\ProbRn$ onto $T_\mu\mathcal P_\Theta$ is given by $u^ie_i$ where for $j=1,\hdots, d$, 
$$\GFR(a-u^ie_i, e_j)=0 \quad \text{i.e.} \quad \GFR(a,e_j)=u^i\GFR(e_i,e_j)=u^iG_{ij}.$$
The $\alpha$-connection on $\ProbRn$ is given by 
 $$(\nablabar^{(\alpha)}_ab)_\mu=D_\mu b(a)-\frac{1+\alpha}{2}\left(\frac{a}{\mu}\frac{b}{\mu}\mu - \left(\int \frac{a}{\mu}\frac{b}{\mu}\mu\right)\mu\right),$$ 
where $D_\mu b(a)$ is the directional derivative of the vector field $b$ in the direction of the vector $a_\mu$. Let $\partial_i$ denote partial derivative with respect to $\theta^i$ for all $i=1,\hdots,d$. For vector fields on the finite-dimensional manifold $\mathcal P_\Theta$,
$$a=\phi_*u=u^ie_i, \quad b=\phi_*v=v^je_j,$$
and at $\mu=\phi(\theta)$, we get since $e_i=\partial_if dx$,
\[
\begin{split}
D_\mu b(a)&=D_\theta v(u)=\partial_i(\partial_jf v^j)u^i dx = (\partial_i\partial_j fu^iv^j + \partial_jfu^i\partial_iv^j) dx\\
&=((\partial_i\partial_j \ell + \partial_i\ell\partial_j\ell)u^iv^j + \partial_j\ell u^i\partial_iv^j) f dx,
\end{split}
\]
where in the last equality we used the equality $\partial_i\partial_j\ell = \partial_i\partial_jf/f - \partial_i\ell \partial_j \ell$.
Since 
$$\frac{a}{\mu}\frac{b}{\mu}\mu = \partial_i\ell\partial_j\ell u^iv^j f dx,$$
we obtain $\nablabar^{(\alpha)}_ab=h dx$ where
\begin{align*}
h/f=\partial_j\ell u^i\partial_iv^j + \left(\partial_i\partial_j\ell + \frac{1-\alpha}{2}\partial_i\ell\partial_j\ell\right)u^iv^j - \frac{1+\alpha}{2}\GFR(a,b).
\end{align*}
Remembering that $\GFR(hdx,kdx)=\E(hk/f^2)$ and since $\E(\partial_m\ell)=0$, we obtain using \eqref{alpha-finite},
\begin{align*}
\GFR(\nablabar^{(\alpha)}_ab,e_m)&=\E[\partial_j\ell\partial_m\ell]u^i\partial_iv^j + \E\left[(\partial_i\partial_j\ell + \frac{1-\alpha}{2}\partial_i\ell\partial_j\ell)\partial_m\ell\right]u^iv^j\\
&=G_{jm} u^i\partial_iv^j+ \tilde\Gamma^{(\alpha)}_{ij,m}u^iv^j.
\end{align*}
Finally, using \eqref{proj-fr}, we see that 
$$\mathrm{Proj}^{\mathrm{FR}}(\nablabar^{(\alpha)}_ab) = G^{km}\GFR(\nablabar^{(\alpha)}_ab,e_m)\partial_k= \left(u^i\partial_iv^k+ G^{km}\tilde\Gamma^{(\alpha)}_{ij,m}u^iv^j\right)\partial_k=\nablatilde^{(\alpha)}_{u}v,$$
which concludes the proof.
\end{proof}

\subsection{The $L^p$-Fisher-Rao metric}

We now introduce a finite-dimensional version of the Finsler $L^p$-Fisher-Rao metric.
\begin{definition}
Given $\theta\in\Theta$ and $v\in T_\theta\Theta$ we define the $L^p$-Fisher-Rao metric on $\Theta$ as
\begin{align}\label{finite-lp-fr}
    F_p(\theta,v)=\left(\E|\langle \grad_\theta \ell(X,\theta), v\rangle|^p\right)^{1/p}.
\end{align}
Here $\langle\cdot,\cdot\rangle$ denotes the Euclidean scalar product on $\R^d$, $\E$ denotes the expectation taken with respect to the random variable $X$ of density $f(\cdot,\theta)$, and $\ell(x,\theta)=\log f(x,\theta)$ is the log-likelihood.
\end{definition}
The metric \eqref{finite-lp-fr} on the parameter space $\Theta$ coincides with the Finsler metric induced on $\mathcal P_\Theta$ by the $L^p$-Fisher-Rao metric \eqref{lp-fisher-rao} through the identification $\mathcal P_\Theta \simeq \Theta$, which is why they are denoted the same way. Indeed, for any $(\theta,v)\in T\Theta$, 
    \begin{align*}
    F_p(\phi(\theta),\phi_*v)=\int\left|\frac{\langle \grad_\theta f(x,\theta),v\rangle}{f(x,\theta)}\right|^p f(x,\theta)dx
    =\E|\langle \grad_\theta \ell(X,\theta),v\rangle|^p=F_p(\theta,v).
    \end{align*}
\begin{lemma}[Induced Chern connection on $\Theta$]
The Chern connection associated to the $L^p$-Fisher-Rao metric on $\Theta$ is given by
\begin{multline}\label{finite-chern-1}
(\nablatilde^v_uv)^m= u(v^m) + (g^v)^{mk}\big( g^{\phi_*v}(\omega(u,v),e_k) + C^{\phi_*v}(\omega(v,v),\phi_*u,e_k)\\
-(g^v)^{ij}g^{\phi_*v}(\omega(v,v),e_i)C^{\phi_*v}(\phi_*u,e_j,e_k)\big),
\end{multline}
where $g$ and $C$ respectively denote the Riemannian metric \eqref{riem-metric} and Cartan tensor \eqref{cartan-tensor} induced by the $L^p$-Fisher-Rao metric, $(g^v)_{ij}=g^{\phi_*v}(e_i,e_j)$ and $(e_i)_i$ are the basis vectors \eqref{basis-vectors} of $T_\mu\mathcal P_\Theta$ and
\begin{equation}\label{omega}
\omega(u,v)=\omega_{ij}u^iv^i \quad \text{with} \quad \omega_{ij}=\left(\partial_i\partial_j\ell+\frac{1}{p}\partial_i\ell\partial_j\ell\right)fdx.
\end{equation}
\end{lemma}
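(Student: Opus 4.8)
The plan is to verify that \eqref{finite-chern-1} satisfies the generalized Koszul formula (Lemma~\ref{lem:koszul}) characterizing the Chern connection, exploiting that the fundamental tensor $g^\nu$ of \eqref{riem-metric} and the Cartan tensor $C^\nu$ of \eqref{cartan-tensor} are fibrewise objects: their restrictions to the embedded submanifold $\mathcal P_\Theta\subset\DensRn$ are computed simply by inserting the basis vectors $e_i=\partial_i f\,dx$, which is exactly the content of the identities $(g^v)_{ij}=g^{\phi_*v}(e_i,e_j)$ in the statement. First I would record the ambient Chern connection of the embedded coordinate fields. Combining Theorem~\ref{thm:chern_dens} (the ambient Chern connection on $\DensRn$ is $\nabla^{(\alpha)}$ for $\alpha=1-2/p$) with the computation of $D_\mu b(a)$ already carried out in the proof of Theorem~\ref{thm:finite-alpha}, one obtains
\[
\nabla^{(\alpha)}_{\phi_* u}\phi_* v=\omega(u,v)+u(v^k)\,e_k .
\]
This is precisely the origin of the tensor $\omega$ of \eqref{omega}: its coefficient $\partial_i\partial_j\ell+\tfrac1p\partial_i\ell\,\partial_j\ell$ is what survives of $D(\phi_*v).(\phi_*u)$ after subtracting the flat part $u(v^k)e_k$ and the $\alpha$-connection correction $\tfrac{p-1}{p}\tfrac{\phi_*u}{\mu}\tfrac{\phi_*v}{\mu}\mu$.

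Next I would write the reduced Koszul formula \eqref{koszul2} for the induced connection $\nablatilde^v$ on $\Theta$ with test vector $w=e_k$, and observe that its six ``metric-derivative and bracket'' terms are literally the same expressions as the corresponding six terms of the ambient Koszul formula evaluated on the embedded fields against $e_k$: directional derivatives of the scalars $g^v(\phi_*a,\phi_*b)$ along tangent directions agree whether computed in $\Theta$ or in $\DensRn$, and $\phi$ being an embedding intertwines the Lie brackets of coordinate fields with the ambient ones. Feeding the ambient Koszul identity for $\nabla^{(\alpha)}_{\phi_*u}\phi_*v$ back in, this collapses to
\[
2g^v(\nablatilde^v_u v,e_k)=2g^v\!\left(\nabla^{(\alpha)}_{\phi_* u}\phi_* v,e_k\right)+2C^v\!\left(\nabla^{(\alpha)}_{\phi_* v}\phi_* v,e_k,\phi_* u\right)-2C^v\!\left(\nablatilde^v_v v,e_k,u\right).
\]

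The key maneuver is then to resolve the apparent circularity, as the unknown connection still appears in the last Cartan term. Setting $u=v$ and using $C^v(\cdot,v,\cdot)=0$ (equivalently $C^v(\cdot,\cdot,v)=0$, by the full symmetry of the Cartan tensor) kills both Cartan terms and shows that $\nablatilde^v_v v$ is nothing but the $g^v$-orthogonal projection onto $T_\theta\Theta$ of the ambient acceleration $\nabla^{(\alpha)}_{\phi_*v}\phi_*v=\omega(v,v)+v(v^k)e_k$. Substituting this projection back, the two Cartan terms combine into $2C^v(\omega(v,v)^\perp,e_k,\phi_* u)$, where $\omega(v,v)^\perp=\omega(v,v)-(g^v)^{ij}g^v(\omega(v,v),e_i)e_j$ is the $g^v$-normal part of $\omega(v,v)$. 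Expanding this normal part and contracting the whole identity with the inverse fundamental matrix $(g^v)^{mk}$ then produces exactly \eqref{finite-chern-1}.

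The main obstacle I expect is the bookkeeping of the middle step: justifying that the induced and ambient Koszul expressions coincide term by term, which requires correctly tracking the derivatives of the reference field $\phi_*v$, of the embedded fields, and of the brackets of coordinate fields under $\phi_*$. The decisive structural point, however, is the symmetry reduction $C^v(\cdot,\cdot,v)=0$, since it is what breaks the circular dependence on the unknown $\nablatilde^v_v v$ and turns the implicit Koszul relation into the explicit closed-form expression.
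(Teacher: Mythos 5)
Your proposal is correct and follows essentially the same route as the paper: both arguments rest on the generalized Koszul formula, on the fact that the ambient Chern connection on $\DensRn$ is $\nabla^{(\alpha)}$ for $\alpha=1-2/p$ (so that $\nabla^{(\alpha)}_{\phi_*u}\phi_*v = u(v^k)e_k + \omega(u,v)$), on the term-by-term identification of the induced and ambient Koszul expressions, and on the vanishing $C^\nu(\nu,\cdot,\cdot)=0$ together with the identity $g^\nu(e_i,e_j)=(g^v)_{ij}$. The only difference is organizational: the paper posits the formula as a candidate (the $g^\nu$-projection of $\nabla^{(\alpha)}_a\nu$ plus the Cartan correction $(g^v)^{mk}C^\nu((\nabla^{(\alpha)}_\nu\nu)^\perp,a,e_k)e_m$) and then verifies Koszul, whereas you derive it from Koszul by first specializing to $u=v$ to identify $\nablatilde^v_v v$ as the projection of $\nabla^{(\alpha)}_\nu\nu$ --- which is precisely the observation $\phi_*(\nablatilde_v v)=(\nabla^{(\alpha)}_\nu\nu)^\top$ that the paper uses in its verification.
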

\begin{proof}
Let $a=\phi_*u$, $\nu=\phi_*v$, $\alpha=1-2/p$, and $\nabla^{(\alpha)}$ be the $\alpha$-connection on $\DensRn$. Similarly to the orthogonal projection with respect to the Fisher-Rao metric \eqref{proj-fr}, the orthogonal projection on $T\Theta$ with respect to $g^\nu$ is given by
\begin{equation}\label{proj-pfr}
\mathrm{Proj}^\nu(a)=(g^v)^{ij} g^\nu(a, e_j)\partial_i.
\end{equation}
Let us denote
$$(\nabla^{(\alpha)}_a\nu)^\top:=\phi_*\mathrm{Proj}^\nu\left(\nabla^{(\alpha)}_a\nu\right)=(g^v)^{ij} g^\nu(a, e_j)e_i, \quad (\nabla^{(\alpha)}_a\nu)^\perp:=(\nabla^{(\alpha)}_a\nu) - (\nabla^{(\alpha)}_a\nu)^\top.$$
We define the connection $\nablatilde$ by
\begin{equation}\label{nabla}
\phi_*(\nablatilde_uv):=(\nabla^{(\alpha)}_a\nu)^\top + (g^v)^{mk} C^\nu\left((\nabla^{(\alpha)}_\nu\nu)^\perp, a, e_k\right)e_m.
\end{equation}
Let us show that $\nablatilde$ is the Chern connection $\nablatilde^v$ on $\Theta$, by showing once again that it verifies the generalized Koszul formula \eqref{koszul}. Using the notations $g^v(u,w)=g^{\phi_*v}(\phi_*u,\phi_*w)$, $C^v(u,w,z)=C^{\phi_*v}(\phi_*u,\phi_*w,\phi_*z)$ and the fact that $C^v(v,\cdot,\cdot)=0$, the generalized Koszul formula can be written
\begin{align*}
2g^v(\nablatilde_uv,w)&=ug^v(v,w)+vg^v(w,u)-wg^v(u,v)\\
&+g^v([u,v],w)-g^v([v,w],u)+g^v([w,u],v)-2C^v(\nabla_vv,u,w).
\end{align*}
Recalling that $\nabla^{(\alpha)}$ is the Chern connection on $\Dens$ and noticing that $\phi_*(\nabla_vv)=(\nabla^{(\alpha)}_\nu\nu)^\top$, the previous equation is satisfied if and only if
$$g^\nu\left((g^v)^{ij} C^\nu\left((\nabla^{(\alpha)}_\nu\nu)^\perp, a, e_j\right)e_i,\phi_*w\right)=C^\nu\left((\nabla^{(\alpha)}_\nu\nu)^\perp,a,\phi_*w\right),$$
which is easily checked to be true using the fact that $ g^\nu(e_i,e_j)=(g^v)_{ij}$. To obtain the desired formula for $\nablatilde^v$, we write the $\alpha$-connection in coordinates, through the same computations as in the proof of Theorem \ref{thm:finite-alpha}
\begin{align*}
\nabla^{(\alpha)}_a\nu=D\nu.a-\frac{1+\alpha}{2}\frac{a}{\mu}\frac{\nu}{\mu}\mu=u^i\partial_iv^je_j + \left(\partial_i\partial_j\ell + \frac{1-\alpha}{2}\partial_i\ell\partial_j\ell\right)u^iv^jfdx=u(v^j)e_j+\omega(u,v).
\end{align*}
Using \eqref{proj-pfr} we obtain
\begin{align*}
(\nabla^{(\alpha)}_a\nu)^\top&=u(v^m)e_m+(g^v)^{mk} g^\nu(\omega(u,v),e_k)e_m\\
(\nabla^{(\alpha)}_\nu\nu)^\perp&=w(v,v)-(g^v)^{ij} g^\nu(\omega(v,v),e_i)e_j
\end{align*}
which injected into \eqref{nabla} gives the desired result.
\end{proof}
\begin{remark}
Like in infinite dimensions (see Remark \ref{rem:chern-prob}), when all entries are the same, the Chern connection on $\mathcal P_\Theta\simeq\Theta$ is the orthogonal projection of the $\alpha$-connection $\nabla^{(\alpha)}$ on $\DensRn$, for $\alpha=1-\frac{2}{p}$, with respect to $g^{\phi_*v},$ the Riemannian metric \eqref{riem-metric} induced by the $L^p$-Fisher-Rao metric
\begin{equation}\label{finite-chern-2}
\nablatilde^v_vv=\mathrm{Proj}^{\phi_*v}\left(\nabla^{(\alpha)}_{\psi_*v}\phi_*v\right)=\left(v(v^m) + (g^v)^{mk} g^{\phi_*v}(\omega(v,v),e_k)\right)\partial_m.
\end{equation}
\end{remark}
\begin{theorem}[Geodesic equation on $\Theta$]
The geodesic equation of the $L^p$-Fisher-Rao metric on the space $\mathcal P_\Theta$ is given by
\begin{align}\label{eq:geodesicTheta} 
\ddot\theta^m + (g^{\dot\theta})^{mk} g^{\phi_*\dot\theta}(\omega(\dot\theta,\dot\theta),e_k)=0,
\end{align}
where $(e_i)_i$ are the basis vectors \eqref{basis-vectors} and
$\omega$ is defined by \eqref{omega}.
\end{theorem}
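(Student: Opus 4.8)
The plan is to read the geodesic equation directly off the Chern connection, using the standard fact (recalled in Appendix~\ref{sec:finsler}) that constant-speed geodesics of a Finsler metric are precisely the curves $\theta$ whose Chern-connection covariant acceleration in the direction of their own velocity vanishes, i.e.\ $\nablatilde^{\dot\theta}_{\dot\theta}\dot\theta=0$. Since the preceding remark already supplies the closed form \eqref{finite-chern-2} of $\nablatilde^v_v v$ when all three arguments coincide, and since a geodesic only ever probes the connection along the ``diagonal'' $v=\dot\theta$, this formula is exactly what is needed: no computation of the full (non-diagonal) Chern connection \eqref{finite-chern-1} is required.

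First I would set $v=\dot\theta$ in \eqref{finite-chern-2}, obtaining
\[
\nablatilde^{\dot\theta}_{\dot\theta}\dot\theta = \Big(v(v^m) + (g^{\dot\theta})^{mk}\, g^{\phi_*\dot\theta}\big(\omega(\dot\theta,\dot\theta),e_k\big)\Big)\partial_m .
\]
The only point requiring care is the interpretation of $v(v^m)$ along the curve. I would extend $\dot\theta(t)$ to a local vector field $v$ agreeing with the velocity along $\theta$; then $v(v^m)=\dot\theta^i\partial_i v^m=\tfrac{d}{dt}\big(v^m\circ\theta\big)=\ddot\theta^m$ by the chain rule. Because the Chern connection with reference vector equal to the velocity depends only on $\dot\theta$ and its derivative along the curve, and not on the chosen extension, this substitution is unambiguous; setting the bracket to zero then yields \eqref{eq:geodesicTheta}. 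The second term carries all of the Finslerian (and $\alpha$-connection) information through the induced metric $g$ of \eqref{riem-metric} and the tensor $\omega$ of \eqref{omega}. As an alternative cross-check one could instead compute the Euler--Lagrange equation of the $p$-energy $E_p(\theta)=\tfrac1p\int_0^1 \E|\langle\grad_\theta\ell,\dot\theta\rangle|^p\,dt$ directly, but the route through \eqref{finite-chern-2} is considerably shorter.

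I expect the main obstacle to be conceptual rather than computational: justifying the characterization $\nablatilde^{\dot\theta}_{\dot\theta}\dot\theta=0$ of geodesics in the present degenerate setting. By Lemma~\ref{lem:Hessian} the $L^p$-Fisher--Rao metric is not strongly convex for $p\neq 2$, so the Chern connection, its geodesic spray, and the manipulation above must all be understood formally, as flagged earlier in the paper; one must check that the reference-vector dependence of the Chern connection—precisely what makes the diagonal formula \eqref{finite-chern-2} sufficient—does not spoil the covariant-acceleration reading of $v(v^m)$. As a final sanity check I would verify that for $p=2$ the second term collapses to the Levi-Civita Christoffel symbols $\tilde\Gamma^{(0)m}_{ij}\dot\theta^i\dot\theta^j$ of the Fisher information metric, so that \eqref{eq:geodesicTheta} recovers the classical Riemannian geodesic equation on $\Theta$.
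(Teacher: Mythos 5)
Your proposal is correct and follows essentially the same route as the paper: the paper's proof likewise invokes Lemma~\ref{lem:finsler-chern} to characterize geodesics by $\nablatilde^{\dot\theta}_{\dot\theta}\dot\theta=0$ and then writes this out in local coordinates using the diagonal formula \eqref{finite-chern-2}, with $v(v^m)=\ddot\theta^m$ along the curve. Your added remarks on the formal nature of the argument in the non-strongly-convex case and the $p=2$ sanity check are consistent with the caveats the paper itself flags.
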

\begin{proof}
This results directly from Lemma~\ref{lem:finsler-chern} in Appendix~\ref{sec:finsler} and writing $\nablatilde^{\dot\theta}_{\dot\theta}\dot\theta=0$ in local coordinates using \eqref{finite-chern-2}.
\end{proof}

\begin{example}[Normal distributions]

\begin{figure}\label{fig:normal}
\includegraphics[width=0.4\linewidth]{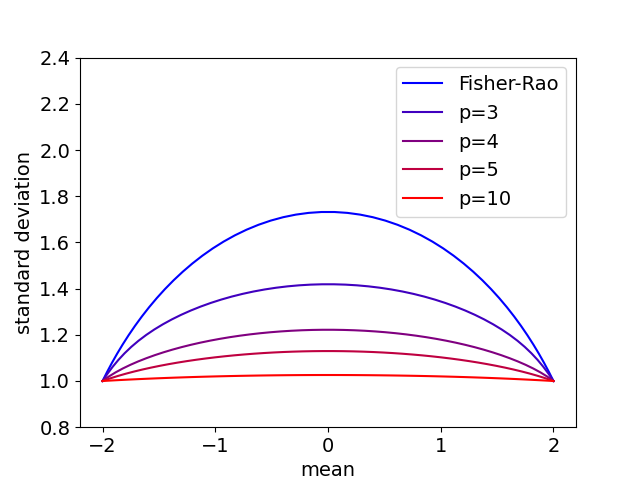}
\includegraphics[width=0.4\linewidth]{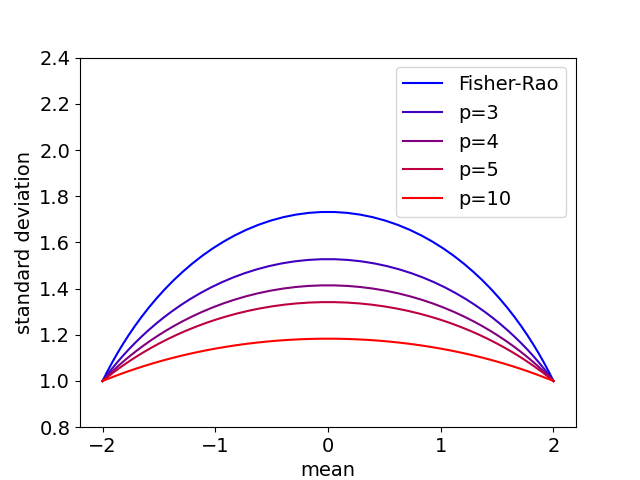}
\caption{Comparison of the geodesics between two normal distributions shown in the parameter space for the $L^p$-Fisher-Rao metric (left) and for the $\alpha$-connection (right), for different values of $p$ and the corresponding values of $\alpha=1-2/p$. The geodesics all start at the normal distribution of parameters $m_0=-2$, $\sigma_0=1$, and end at the normal distribution of parameters $m_1=2$, $\sigma_1=\sigma_0=1$.}
\end{figure}

Let us consider the example of univariate normal distributions, parametrized by mean and standard deviation. The parameter space is the upper half-plane $\Theta=\R\times\R_+^*$, and the Fisher-Rao metric, after a change of coordinates, is the hyperbolic metric of the Poincaré half-plane. The family of Riemannian metrics induced by the Finsler $L^p$-Fisher-Rao metric are also Poincaré metrics: for any $\theta=(m,\sigma)\in \Theta$ and $v\in \R^2$, the metric matrix is given by
$$g^v_{(m,\sigma)}=\frac{1}{\sigma^2} g_0^v$$
where $g_0^v$ does not depend on $m$ and $\sigma$. In order to compute geodesics for the $L^p$-Fisher-Rao metric, one can solve the geodesic equation \eqref{eq:geodesicTheta}, using the following densities with respect to a given $\mu(dx)=\frac{1}{\sqrt{2\pi}\sigma}\exp(-\frac{(x-m)^2}{2\sigma^2})dx$: the basis vectors of the tangent plane $T_\mu\mathcal P_\Theta$ are given by
$$\frac{e_1}{\mu}=\frac{1}{\sigma}z, \quad \frac{e_2}{\mu}=\frac{1}{\sigma}(z^2-1) \quad \text{with}\quad z:=\frac{x-m}{\sigma}$$
and for a given curve $t\mapsto \theta(t)=(m(t),\sigma(t))$,
\begin{align*}
\frac{\dot\theta}{\mu}&=\frac{1}{\sigma}(\dot m z + \dot\sigma(z^2-1))\\
\frac{\omega(\dot\theta,\dot\theta)}{\mu}&=\frac{1}{\sigma^2}\left((-1+\frac{1}{p}z^2)\dot m^2+(1-3z^2+\frac{1}{p}(z^2-1)^2)\dot\sigma^2+2(-2z+\frac{1}{p}z(z^2-1))\dot m \dot\sigma\right).
\end{align*}
The $L^p$-Fisher-Rao geodesic can be compared to the solutions of the geodesic equation of the $\alpha$-connection for $\alpha=1-2/p$:
$$\ddot m - 2\frac{1+\alpha}{\sigma} \dot m \dot\sigma=0, \qquad \ddot\sigma + \frac{1-\alpha}{2\sigma}\dot m^2 - \frac{1+2\alpha}{\sigma}\dot\sigma^2=0.$$
In both cases, we solve the geodesic ODE with boundary constraints in Python for a discretization of $50$ time steps, using the dedicated function in Scipy\footnote{\url{https://docs.scipy.org/doc/scipy/reference/generated/scipy.integrate.solve_bvp.html}}, which implements a fourth order collocation algorithm. We plot in Figure~\ref{fig:normal} the $L^p$-Fisher-Rao geodesics for several values of $p$ as well as the $\alpha$-geodesics for the corresponding values of $\alpha$. As expected, these geodesics do not coincide, except for $p=2$, where we retrieve the Fisher-Rao metric.
\end{example}

\appendix
\section{Infinite dimensional Finsler geometry}\label{sec:finsler}

In this appendix we will present several key definitions of Finsler geometry in the infinite dimensional setting. We will base our definitions on their counterparts from classical finite dimensional Finsler geometry, see eg.~\cite{bao2000introduction, chern2005riemann, rademacher2004nonreversible}. 

In the following let $\mathcal M$ be an infinite dimensional, Fr\'echet manifold with tangent bundle $T\mathcal M$.

\begin{definition}[Finsler structure]\label{def:Finsler}
A Finsler structure on $\mathcal M$ is a function $F: T\mathcal M \to [0,\infty)$, that is smooth on the complement of the zero section of $T\mathcal M$ and satisfies for all $x\in\mathcal M$ and $X,Y\in T_x\mathcal M$
\begin{enumerate}[label=(\alph*)]
\item $F(\lambda Y)=\lambda F(Y)$ for all $\lambda>0$; 
\item $F(Y)\geq 0$ and $F(Y)=0$ if and only if $Y=0$.
\item \label{eq:subadd} 
$F(X+Y)\leq F(X)+F(Y)$.
\end{enumerate}
The Finsler norm $F$ is called strongly convex if we have in addition
\begin{enumerate}[label=(\alph*),resume]
\item \label{eq:strongconvex}For any $0\neq V\in T_x\mathcal M$
the Hessian matrix $g^{V}$ of $F^2$ at $V$ exists and is positive definite, where
\begin{align*}
g^{V}(X,Y):=\frac{1}{2}\frac{\partial^2}{\partial s\partial t}\left[F^2(V+sX+tY)\right]_{s=t=0}.
\end{align*}
\end{enumerate}
\end{definition}
\begin{remark}
It can be shown that the strong convexity condition~\ref{eq:strongconvex} implies the subadditivity condition~\ref{eq:subadd} and several modern textbooks require strong convexity instead of subadditivity in the definition of a Finsler metric as this allows to develop several concepts of Riemannian geometry in the Finslerian setting. We choose to not assume this stronger condition as our main example, the $L^p$-Fisher-Rao metric, is not strongly convex.  
\end{remark}
\begin{remark}[Weak and strong Finsler metrics]
For each $x\in \mathcal M$ the Finsler metric $F$ induces a topology on $T_x \mathcal M$ and in finite dimensions this topology coincides with the original manifold topology. In infinite dimensions this is not the case and we will distinguish between two different types of Finsler metrics:  strong Finsler metric, for which $F_{x}$ induces the locally convex topology on $T_x M$ and weak Finsler metrics, where the induced topology can be weaker than the locally convex topology.  If $\mathcal M$ is not a Banach manifold then any Finsler metric on $\mathcal M$ can only be a weak Finsler metric.
\end{remark}

Similarly as a Riemannian metric a Finsler structure $F$ on a manifold $\mathcal M$ defines a length structure on the set of piece wise smooth curves and thus one can define a corresponding path length distance:
\begin{definition}
Let $c:[a,b]\to\mathcal M$ be a piece wise smooth curve.
The length of $c$ with respect to $F$ is defined as
\begin{align*}
L_{F}(c):=\int_a^b F(c(t),\dot{c}(t)) dt.
\end{align*}
For any pair of points $x,y\in \mathcal M$ we consider the induced geodesic distance function  
\begin{align*}
d_{F}(p,q):=\operatorname{inf}_{c} L_{F}(c),
\end{align*}
where the infimum is calculated over the set of a piece wise smooth curves that connect $x$ to $y$. Similar as in Riemannian geometry one can show that minimizing the length is equivalent to minimizing the energy, which is defined as
\begin{align}\label{eq:finslerenergy}
E_{F}(c):=\int_a^b F^2(c(t),\dot{c}(t)) dt.
\end{align}
\end{definition}

\begin{remark}[Vanishing Geodesic distance]
It is easy to see that the geodesic distance functions is symmetric and satisfies the triangle inequality. In general, for weak Finsler metrics, it does not satisfy the non-degenracy property -- $d_{F}(x,y)=0$ if and only if $x=y$ for  Finsler metrics. Indeed, even in the Riemannian case, several examples have been encountered where the geodesic distance can be degenerate or even vanishes identically, see eg.~\cite{eliashberg1993bi,michor2005vanishing,bauer2020vanishing,jerrard2019vanishing}. 
\end{remark}

Next we will introduce two important concepts from Finsler geometry: 
 the Cartan tensor, which was introduced by E.~Cartan~\cite{cartan1934espaces} to evaluate the differences between Finsler metrics and Riemannian metrics,  and the Chern connection, which is a generalization of the Levi-Civita connection on a Finsler manifold.

Note, that the definition of these two objects requires that the Finsler metric is strongly convex. As the $L^p$-Fisher-Rao metric, studied in the following sections, will not satisfy this property several of the calculations in these parts have to be taken with caution and should be thus  understood only formally. 
\begin{definition}[Cartan Tensor and Chern connection]
Let $(\mathcal M, F)$ be a Finsler manifold, where $F$ is assumed to satisfy the strong convexity assumption. For any nonzero tangent vector $V\in T_x\mathcal M$, the Cartan tensor is defined as the symmetric trilinear form
\begin{align*}
C^V(X,Y,Z):=\frac{1}{4}\frac{\partial^3}{\partial s\partial t\partial r}\left[F^2(V+sX+tY+rZ)\right]_{s=t=r=0},
\end{align*}
and the Chern connection, if it exists, is the unique affine, torsion-free connection $\nabla^V$ that
is almost metric, that is for vector fields $X,Y,Z$ we have
\begin{align*}\label{almost_metric}
X g^V(Y,Z)= g^V(\nabla^V_X Y,Z)+g^V(Y,\nabla^V_X Z)+2C^V(\nabla^V_X V,Y,Z).
\end{align*}
\end{definition}
\begin{remark}
In the above definition of the Chern-connection we have added the assumption on it's existence. This is additional assumption is not necessary in finite dimensions, but is entirely an infinite dimensional phenomenon, see eg.~\cite{bauer2014homogeneous} where the authors studied a Riemannian metric on a group of diffeomorphisms such that the corresponding Levi-Civita connection does not exist. 
\end{remark}

The next Lemma, which will be of importance when we show the equivalence of the Chern-connection of the $L^p$-Fisher-Rao metric and the $\alpha$-connection on $\operatorname{Dens}(M)$, provides a generalized Koszul-formula for the Chern-connection:
\begin{lemma}\label{lem:koszul}
Let $(\mathcal M, F)$ be a Finsler manifold, where $F$ is assumed to satisfy the strong convexity assumption. For every non-zero vector field $V\in T_x\mathcal M$, the Chern connection, if it exists, satisfies the  generalized Koszul formula
\begin{equation}\label{koszul}
\begin{aligned}
&2g^V(\nabla^V_X Y,Z)=X g^V(Y,Z)+ Y g^V(Z,X)-Z g^V(X,Y)\\
&\qquad + g^V([X,Y],Z)-g^V([Y,Z],X)+g^V([Z,X],Y)\\
&\qquad -2C^V(\nabla^V_X V,Y,Z)-2C^V(\nabla^V_Y V,Z,X)+2C^V(\nabla^V_Z V,X,Y) 
\end{aligned}
\end{equation}
\end{lemma}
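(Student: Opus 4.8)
The plan is to establish \eqref{koszul} by the standard polarization argument that produces the Koszul formula in the Riemannian case, carrying the extra Cartan terms along unchanged. Since the lemma assumes that the Chern connection $\nabla^V$ exists, nothing about existence must be shown: the whole content is that the two defining properties of $\nabla^V$ -- the almost-metric compatibility in the definition of the Chern connection and torsion-freeness -- together with the symmetry of the Hessian $g^V$, already force the identity.

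Concretely, I would first write the almost-metric relation for the three cyclic orderings of $(X,Y,Z)$:
\begin{align*}
X g^V(Y,Z) &= g^V(\nabla^V_X Y,Z)+g^V(Y,\nabla^V_X Z)+2C^V(\nabla^V_X V,Y,Z),\\
Y g^V(Z,X) &= g^V(\nabla^V_Y Z,X)+g^V(Z,\nabla^V_Y X)+2C^V(\nabla^V_Y V,Z,X),\\
Z g^V(X,Y) &= g^V(\nabla^V_Z X,Y)+g^V(X,\nabla^V_Z Y)+2C^V(\nabla^V_Z V,X,Y).
\end{align*}
Next I would form the combination (first)$+$(second)$-$(third). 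Using only the symmetry of $g^V$, the six connection terms on the right regroup into the three pairs $g^V(\nabla^V_X Y+\nabla^V_Y X,Z)$, $g^V(\nabla^V_X Z-\nabla^V_Z X,Y)$, and $g^V(\nabla^V_Y Z-\nabla^V_Z Y,X)$, while the three Cartan terms acquire exactly the sign pattern $+,+,-$. Applying torsion-freeness $\nabla^V_A B-\nabla^V_B A=[A,B]$ turns the first pair into $2g^V(\nabla^V_X Y,Z)-g^V([X,Y],Z)$ and the other two into $g^V([X,Z],Y)$ and $g^V([Y,Z],X)$. Isolating $2g^V(\nabla^V_X Y,Z)$ and using $[X,Z]=-[Z,X]$ to match the stated sign conventions then yields \eqref{koszul}.

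The main point requiring care -- rather than a genuine obstacle -- is to check that every operation is legitimate in the infinite-dimensional, weak-Finsler setting. The derivation uses nothing beyond bilinearity and symmetry of $g^V$ and the fact that $C^V(\nabla^V_X V,\cdot,\cdot)$ is a well-defined trilinear expression; in particular, no nondegeneracy of $g^V$ is invoked in the manipulation itself. (Nondegeneracy would only be needed if one wished to solve \eqref{koszul} for $\nabla^V_X Y$, which is not claimed here.) I would therefore stress that the identity holds verbatim for weak metrics, that the bracket terms are produced solely by torsion-freeness, and that the Cartan terms are never simplified against one another but merely transcribed, so that the symmetry of $C^V$ plays no role in the proof.
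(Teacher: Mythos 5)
Your proposal is correct and is exactly the argument the paper has in mind: the paper's proof simply states that the result follows as in the finite-dimensional case (citing Chern--Shen), and what you write out — cyclically permuting the almost-metric identity with signs $+,+,-$, regrouping via the symmetry of $g^V$, and invoking torsion-freeness to produce the bracket terms — is precisely that standard derivation, with the Cartan terms carried along unchanged. Your added remarks that neither the symmetry of $C^V$ nor the nondegeneracy of $g^V$ is used, so the identity holds verbatim for weak metrics, are accurate and a worthwhile clarification in this infinite-dimensional setting.
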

\begin{proof}
    The proof of this result follows exactly as in the finite dimensional situation, see eg.~\cite{chern2005riemann}.
\end{proof}
The next results shows that the Chern-connection, similarly to the Levi-Civita connection in Riemannian geometry, describes the locally minimizing curves. 
\begin{lemma}\label{lem:finsler-chern}
Let $(\mathcal M, F)$ be a Finsler manifold, where $F$ is assumed to satisfy the strong convexity assumption. Assume in addition that the Chern connection $\nabla$ exists. 
Then the critical points of the energy functional~\eqref{eq:finslerenergy} are describe by the geodesic equation
\begin{equation}
\nabla^{{c_t}}_{c_t}c_t=0.
\end{equation}
\end{lemma}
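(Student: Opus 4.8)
The plan is to run a first-variation computation of the energy functional \eqref{eq:finslerenergy}, using the almost-metric compatibility of the Chern connection together with the homogeneity of $F$ to kill every Cartan-tensor correction term that appears. Before varying, I would record two consequences of $F$ being positively $1$-homogeneous (so $F^2$ is $2$-homogeneous in the fibre variable). First, Euler's identity gives $F^2(V)=g^V(V,V)$, so the energy can be rewritten as $E_F(c)=\int_a^b g^{c_t}(c_t,c_t)\,dt$. Second, since the Hessian $g^V$ is $0$-homogeneous in $V$, its derivative in the reference direction $V$ vanishes, i.e. $C^V(V,\cdot,\cdot)=0$; by the symmetry of the Cartan tensor this means $C^V$ vanishes as soon as any one of its three arguments equals $V$. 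This last fact is exactly what makes the whole computation tractable.

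Next I would fix a variation $c_s$ of $c=c_0$ with variation field $W=\partial_s c_s|_{s=0}$ vanishing at the endpoints, write $T=c_t$ for the velocity, and note $[W,T]=0$. Since $W\cdot g^T(T,T)=\partial_s\,g^T(T,T)$, the defining almost-metric relation of $\nabla^V$ (equivalently Lemma~\ref{lem:koszul}) yields
\[
\tfrac{d}{ds}\big|_{s=0}E_F(c_s)=\int_a^b\Big(2g^T(\nabla^T_W T,T)+2C^T(\nabla^T_W T,T,T)\Big)\,dt=2\int_a^b g^T(\nabla^T_T W,T)\,dt,
\]
where the Cartan term dropped because $C^T(\cdot,T,T)=0$ and I used torsion-freeness of the Chern connection, $\nabla^T_W T=\nabla^T_T W+[W,T]=\nabla^T_T W$ (the reference vector being held fixed at $T$). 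Applying the almost-metric relation once more with reference and differentiation vector both equal to $T$, and again discarding the Cartan term $C^T(\nabla^T_T T,W,T)=0$, I would integrate by parts in $t$:
\[
g^T(\nabla^T_T W,T)=\tfrac{d}{dt}\,g^T(W,T)-g^T(W,\nabla^T_T T),
\]
so that the total-derivative term integrates to a boundary contribution that vanishes because $W(a)=W(b)=0$, leaving
\[
\tfrac{d}{ds}\big|_{s=0}E_F(c_s)=-2\int_a^b g^T(W,\nabla^T_T T)\,dt.
\]

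Finally, since $W$ ranges over all admissible variation fields and $g^T$ is positive definite along any regular curve (here $T=c_t\neq 0$, invoking strong convexity), the fundamental lemma of the calculus of variations forces $\nabla^{c_t}_{c_t}c_t=0$, the asserted geodesic equation. I expect the principal obstacle to be conceptual rather than computational: one must track the reference-vector dependence of $g^V$, $C^V$ and $\nabla^V$ consistently, and the argument succeeds precisely because every Cartan correction that surfaces has $T$ in one of its slots and hence vanishes by $C^V(V,\cdot,\cdot)=0$. A secondary, genuinely infinite-dimensional point is that the concluding step needs (weak) non-degeneracy of $g^T$ on $T_{c(t)}\mathcal M$, which is exactly what the strong convexity hypothesis supplies, so that $g^T(W,\cdot)\equiv 0$ for all admissible $W$ does indeed give $\nabla^T_T T=0$.
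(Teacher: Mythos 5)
Your argument is correct and is precisely the standard first-variation computation that the paper itself does not write out but instead defers to the finite-dimensional references (Bao--Chern--Shen, Rademacher): rewrite $E_F$ via Euler's identity as $\int g^{c_t}(c_t,c_t)\,dt$, apply almost-metricity twice, and observe that every Cartan correction dies because $C^V(V,\cdot,\cdot)=0$. You also correctly flag the one genuinely infinite-dimensional point — that concluding $\nabla^{c_t}_{c_t}c_t=0$ from the vanishing of $\int g^{c_t}(W,\nabla^{c_t}_{c_t}c_t)\,dt$ requires the (weak) non-degeneracy of $g^{c_t}$ supplied by strong convexity — so your write-up in fact fills in the details the paper leaves implicit.
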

\begin{proof}
Assuming the existence of the Chern connection, this follows exactly as  in finite dimensions, see eg.~\cite{chern2005riemann,rademacher2004nonreversible}.    
\end{proof}

\bibliography{bibliography}
\bibliographystyle{abbrv}

\end{document}